\newtheorem{theorem}{Theorem}[section]
\newtheorem{corollary}[theorem]{Corollary}
\newtheorem{lemma}[theorem]{Lemma}
\newtheorem{prop}[theorem]{Proposition}
\newtheorem{quest}[theorem]{Question}
\theoremstyle{definition}
\newtheorem{definition}[theorem]{Definition}
\newcommand{\PSH}{{\rm PSH}}
\newcommand{\vol}{{\rm Vol}}
\newcommand{\capa}{{\rm Cap}}
\newcommand{\Ric}{{\rm Ric}}
\newcommand{\Ent}{{\rm Ent}}
\newcommand{\Amp}{{\rm Amp}}
\newcommand{\bR}{\mathbb{R}}
\newcommand{\bN}{\mathbb{N}}
\newcommand{\bC}{\mathbb{C}}
\newcommand{\cH}{\mathcal{H}}
\newcommand{\cE}{\mathcal{E}}
\newcommand{\cM}{\mathcal{M}}
\newcommand{\f}{\varphi}
\newcommand{\weak}{\rightharpoonup}
\numberwithin{equation}{section}
\subjclass[2010]{32W20, 32U05, 32Q15, 35A23}
\keywords{K\"ahler manifold, geodesic, envelope, Darvas metric, contact set, entropy}
\begin{document}

 \title[Geodesic distance and Monge-Amp\`ere measures on contact sets]{Geodesic distance and Monge-Amp\`ere measures on contact sets}
\author{Eleonora Di Nezza \& Chinh H. Lu}

\thanks{The authors are partially supported by the ANR project PARAPLUI ANR-20-CE40-0019}

\address{Ecole Polytechnique and Sorbonne Universit\'e}

\email{eleonora.di-nezza@polytechnique.edu}
\email{eleonora.dinezza@imj-prg.fr}

\address{Universit\'e Paris-Saclay, CNRS, Laboratoire de Math\'ematiques d'Orsay, 91405, Orsay, France.}

\email{hoang-chinh.lu@universite-paris-saclay.fr}

\date{\today}

\dedicatory{Dedicated to L\'aszl\'o Lempert on the occasion of his 70 birthday.}

\maketitle

\begin{abstract}
We prove a geodesic distance formula for quasi-psh functions with finite entropy, extending results by Chen and Darvas. We work with big and nef cohomology classes: a key result we establish is the convexity of the $K$-energy in this general setting. We then study Monge-Amp\`ere measures on contact sets, generalizing a recent result by the first author and Trapani. 
\end{abstract}

\tableofcontents

\section{Introduction}
Let $X$ be a compact K\"ahler manifold of dimension $n$ and fix a K\"ahler metric $\omega$. By the $dd^c$ lemma  there is a  correspondence between the space of K\"ahler metrics in $\{\omega\}$ and the space of K\"ahler potentials 
\[
\mathcal{H} := \{u\in C^{\infty}(X,\mathbb{R}) \; : \; \omega_u >0 \}.  
\]Motivated by the study of canonical metrics on $X$, in \cite{Mab87} Mabuchi introduced a Riemannian structure on the space of K\"ahler potentials, giving rise to the notion of geodesics connecting two  elements in $\mathcal{H}$. As discovered later by Semmes \cite{Sem92} and Donaldson \cite{Don99}, the geodesic equation can be formulated as a degenerate homogeneous complex Monge-Amp\`ere equation. 

By now the optimal $C^{1,1}$ regularity of geodesic segments is well-known  (see \cite{Chen00}, \cite{Blo12}, \cite{LV13}, \cite{DL12}, \cite{CTW18}).   Darvas  has introduced in \cite{Dar15} a family of $L^p$ type Finsler metrics on $\mathcal{H}$, generalizing the $L^2$ metric of Mabuchi, and studied their completions. These metrics have found numerous spectacular applications in K\"ahler geometry (see \cite{BBJ15}, \cite{DR17}, \cite{BDL17,BDL20}).    As shown by Chen \cite{Chen00} and  Darvas \cite{Dar15} the $L^p$ distance between two potentials $u_0,u_1$ with boundded Laplacian is realized by the geodesic segment $u_t$:
\begin{equation}
	\label{eq: geodesic distance intro}
	d_p(u_0,u_1)^p= \int_X |\dot{u}_t|^p (\omega+dd^c u_t)^n, \; \forall t \in [0,1]. 
\end{equation}
In \cite{DNL20}, we have extended the study of these metrics for a big and nef cohomology class $\{\theta\}$. We proved in that paper that \eqref{eq: geodesic distance intro} holds for $u_j =P_{\theta}(f_j)$, $j=0,1$, where $f_j$ are smooth functions on $X$.  

The main goal of this paper is to investigate the following 
\begin{quest}
	Under what condition on $u_0,u_1\in \mathcal{E}^1(X,\theta)$, does \eqref{eq: geodesic distance intro} hold? 
\end{quest}
Berndtsson has shown in \cite{Bern18} that \eqref{eq: geodesic distance intro} holds for geodesics with continuous time derivatives. 
By convexity of $t\mapsto u_t$ the left and right derivatives $\dot{u}_t^{\pm}(x)$ exist provided that $u_0(x)$ and $u_1(x)$ are finite. 
An example in \cite{Lem21} shows that there is a bounded psh geodesic such that the two directional derivatives differ $(\omega+dd^c u_t)^n$-almost everywhere for any $t$. Another similar example was given in \cite{Dar15} showing that \eqref{eq: geodesic distance intro} does not hold even for a bounded geodesic. In these examples the Monge-Amp\`ere measures $\omega_{u_0}^n$ and $\omega_{u_1}^n$ do not have density with respect to Lebesgue measure.  
\cite[Lemma 10.2]{Lem21} seems to suggest that \eqref{eq: geodesic distance intro} should hold when the measures $(\omega+ dd^c u_t)^n$ are uniformly absolutely continuous with respect to Lebesgue measure.  

In the general context of big and nef cohomology classes, our first main result confirms the above expectation: 
\begin{theorem}\label{thm: geodesic distance intro}
Assume $\theta$ is a smooth closed real $(1,1)$-form representing a big and nef class. Fix $p\geq 1$,  $u_0,u_1\in \Ent(X,\theta)$ and let $u_t$ be the psh geodesic connecting $u_0$ to $u_1$. If $u_0-u_1$ is bounded then 
\begin{equation*}
 \int_X |\dot{u}_t|^p (\theta +dd^c u_t)^n\; \text{is constant in}\; t \in [0,1].
\end{equation*}
If in addition $u_0,u_1\in \cE^p(X,\theta)$, then 
$$d_p^p(u_0,u_1)= \int_X |\dot{u}_t|^p (\theta +dd^c u_t)^n,\quad \forall t\in [0,1].$$\end{theorem}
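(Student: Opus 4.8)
The strategy is to reduce the general finite-entropy case to the known smooth case of \cite{DNL20} by a careful approximation argument, using the convexity of the $K$-energy (established earlier) as the key tool to control the entropy along the approximating sequence.

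First I would set up the approximation: since $u_0,u_1\in\Ent(X,\theta)$ with $u_0-u_1$ bounded, approximate each $u_j$ by a decreasing sequence of potentials of the form $P_\theta(f_j^k)$ with $f_j^k$ smooth, chosen so that $f_0^k-f_1^k$ stays uniformly bounded (one can, e.g., add a common large constant perturbation). For these, \eqref{eq: geodesic distance intro} holds by \cite{DNL20}. Let $u_t^k$ be the geodesic connecting $P_\theta(f_0^k)$ to $P_\theta(f_1^k)$; by comparison of geodesics, $u_t^k$ decreases (or is comparable, after normalization) to $u_t$ locally uniformly in $t$, and the endpoint difference bound $\|u_0^k-u_1^k\|_{L^\infty}$ is uniform in $k$, which bounds the Lipschitz constant of $t\mapsto u_t^k(x)$ uniformly, hence gives uniform control of $\dot u_t^k$.

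The heart of the matter is passing to the limit in $\int_X|\dot u_t^k|^p\,(\theta+dd^c u_t^k)^n$. The measures $(\theta+dd^c u_t^k)^n$ converge weakly to $(\theta+dd^c u_t)^n$ (by continuity of the Monge--Ampère operator along decreasing sequences of bounded potentials, or along the relevant monotone approximation in $\cE^1$), and $\dot u_t^k\to\dot u_t$ in a suitable sense; the difficulty is that one needs \emph{convergence of the integrals}, i.e. no mass escapes and the integrands do not concentrate. This is where finite entropy enters: the assumption $u_0,u_1\in\Ent(X,\theta)$ should propagate, via convexity of the $K$-energy along the geodesic $u_t^k$ (and its lower semicontinuity), to a uniform bound on $\Ent(X,\theta)(u_t^k)$, i.e. a uniform bound on $\int_X \log\!\big(\tfrac{(\theta+dd^c u_t^k)^n}{dV}\big)(\theta+dd^c u_t^k)^n$. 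Such a uniform entropy bound yields uniform integrability of the densities of $(\theta+dd^c u_t^k)^n$ against $dV$, which—combined with the uniform $L^\infty$ bound on $\dot u_t^k$ and weak convergence—upgrades weak convergence of the measures to convergence of $\int|\dot u_t^k|^p(\theta+dd^c u_t^k)^n$. Since each such integral is independent of $t$, the limit $\int_X|\dot u_t|^p(\theta+dd^c u_t)^n$ is independent of $t$ as well. The main obstacle I anticipate is precisely this interchange of limits: establishing that the geodesic $u_t^k$ indeed has uniformly bounded entropy (the convexity of $K$-energy controls the entropy only up to the pluripotential-energy terms, which must be shown to stay bounded too) and then that the derivatives $\dot u_t^k$ converge strongly enough—likely one proves $\dot u_t^k\to\dot u_t^\pm$ pointwise a.e.\ and invokes a Vitali-type argument with the entropy-driven uniform integrability.

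For the final equality $d_p^p(u_0,u_1)=\int_X|\dot u_t|^p(\theta+dd^c u_t)^n$ under the extra hypothesis $u_0,u_1\in\cE^p(X,\theta)$: the inequality $\le$ follows from the fact that the geodesic is a candidate curve (or from general lower semicontinuity/approximation properties of $d_p$ in $\cE^p$, together with \cite{DNL20} at the smooth level and monotone approximation); the reverse inequality $\ge$ follows from the first part by evaluating the (now $t$-independent) integral at, say, $t=0$ and comparing with the definition of $d_p$ via the Finsler length, using that the geodesic is length-minimizing in $\cE^p$. One then just checks that the approximation of $d_p(P_\theta(f_0^k),P_\theta(f_1^k))\to d_p(u_0,u_1)$ is compatible with the convergence of the integrals established above.
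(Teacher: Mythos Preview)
Your overall architecture is right: approximate, use $K$-energy convexity to propagate an entropy bound along the approximating geodesics, then pass to the limit using the resulting uniform integrability. But there is a genuine gap at the very first step, and it is not the one you flag.

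The approximants $u_j^k=P_\theta(f_j^k)$ with $f_j^k$ smooth do have finite entropy (their Monge--Amp\`ere measure has bounded density), but the bound depends on $\|f_j^k\|_{C^2}$, which blows up as $f_j^k\searrow u_j$. So you have no \emph{uniform} entropy bound at the \emph{endpoints} $u_0^k,u_1^k$, and without that the $K$-energy convexity inequality $\cM_\theta(u_t^k)\le (1-t)\cM_\theta(u_0^k)+t\cM_\theta(u_1^k)$ tells you nothing useful along the geodesic. The paper avoids this by constructing the approximants via complex Monge--Amp\`ere equations with \emph{prescribed} density: writing $(\theta+dd^c u_j)^n=f_j\omega^n$, one solves $(\theta+dd^c v_{j,k})^n=c_{j,k}\min(f_j,k)\,\omega^n$ (and, in the K\"ahler step, uses the flow regularization of \cite{DL20GT}), so that the endpoint entropy is controlled by the entropy of the original $u_j$. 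One then restores the bounded-difference condition by replacing $v_{0,k},v_{1,k}$ with the envelopes $P_\theta(v_{0,k},v_{1,k}+C)$ and $P_\theta(v_{0,k}+C,v_{1,k})$, whose Monge--Amp\`ere measures are dominated by $\theta_{v_{0,k}}^n+\theta_{v_{1,k}}^n$ and hence still have uniformly bounded entropy.

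There is a second issue you gloss over: even with uniform entropy along $u_t^k$, the convergence $\int|\dot u_{t}^k|^p\theta_{u_t^k}^n\to\int|\dot u_t|^p\theta_{u_t}^n$ is delicate because $\dot u_t^k$ is not known to converge pointwise to a single function $\dot u_t$; a priori one only has $\dot u_t^-\le\liminf\dot u_t^k\le\limsup\dot u_t^k\le\dot u_t^+$. The paper handles this in two moves. First it proves, as a separate lemma, that $\dot u_t^-=\dot u_t^+$ a.e.\ with respect to $(\theta+dd^c u_t)^n$; this uses the linearity of $E$ along geodesics together with the entropy bound along the limiting geodesic (itself a consequence of $K$-energy convexity). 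Second, it passes to the limit via the Lempert-type sandwich $a_{t,h,k}\le\dot u_t^k\le b_{t,h,k}$ with difference quotients at step $h$, showing first that $\int|\dot u_t^k-c_{t,h,k}|^p\theta_{u_t^k}^n\to\int|\dot u_t^+-\dot u_t^-|^p\theta_{u_t}^n=0$ and then that $\int|c_{t,h,k}|^p\theta_{u_t^k}^n$ converges. Your ``Vitali-type argument with entropy-driven uniform integrability'' points in the right direction but does not by itself resolve the left/right-derivative ambiguity.
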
 
Here $\Ent(X,\theta)$ consists of functions $u\in \cE(X,\theta)$ whose Monge-Amp\`ere measure has finite entropy: 
\[
\Ent(\omega^n, \theta_u^n):= \int_X \log \left (\frac{\theta_u^n}{\omega^n} \right) \theta_u^n <+\infty.  
\] 
For more details on finite entropy potentials and finite energy classes $\cE^p(X,\theta)$, we refer to Section \ref{sec:prelim}. It is likely that Theorem \ref{thm: geodesic distance intro} holds for convex weights $\chi$ considered in \cite{Dar15}.

 In the K\"ahler case (i.e. when $\theta=\omega$), a crucial step in  our proof of Theorem \ref{thm: geodesic distance intro} is to prove that  the left and right derivatives $\dot{u}_t^{\pm}$ coincide almost everywhere with respect to $\omega_{u_t}^n$. This is done by using \cite[Lemma 4.11]{BDL17} and  the convexity of the extended Mabuchi K-energy (see \cite{BB17}, \cite{CLP16}, \cite{BDL17}).  
 In order to run the same arguments for  a big and nef cohomology class,  we introduce a natural extension of the K-energy and prove that it is convex along plurisubharmonic geodesics (Theorem \ref{thm: convexity of Mabuchi nef}), paving the way to studying constant scalar curvature metrics on singular K\"ahler varieties.


 We next discuss an application of Theorem \ref{thm: geodesic distance intro}. Given a real function $f$ of class $C^2$ on $X$,  one can show, by a standard balayage argument,  that the Monge-Amp\`ere measure of the envelope $P_{\omega}(f)$ is supported on the contact set $\mathcal{C} =\{P_{\omega}(f) =f\}$.  By proving that the Laplacian of  $P_{\omega}(f)$ is bounded,  Berman has shown in \cite{Ber19} that
 \[
 (\omega+dd^c P_{\omega}(f))^n = {\bf 1}_{\mathcal{C}} (\omega+dd^c f)_+^n,
 \] 
 where for a smooth real $(1,1)$-form $\alpha$ we let $\alpha_+= \alpha$ if $\alpha \geq 0$ and $\alpha_+=0$ otherwise. By a polarization argument  it is shown in \cite{DNT19} that the same conclusion holds for a big cohomology class $\{\theta\}$.  A crucial step in the proof of \cite{DNT19} is to show that if $u,v\in \PSH(X,\omega)$, $v$ has bounded Lplacian, and $u\leq v$, then we have the equality of Monge-Amp\`ere measures on the contact set: 
 \begin{equation}
 	\label{eq: MA contact}
 	{\bf 1}_{\{u=v\}} (\omega+dd^c u)^n = {\bf 1}_{\{u=v\}} (\omega+dd^c v)^n. 
 \end{equation}
 
  We establish the following generalization of \eqref{eq: MA contact}:
 
 \begin{prop}
 	Let $\theta$ be a smooth closed real $(1,1)$-form representing a big and nef class.	Assume $u \in \PSH(X,\theta)$, $v \in \cE(X,\theta)$, $u\leq v$ and $(\theta+dd^c v)^n$ has finite entropy. Then 
	\[
	{\bf 1}_{\{u=v\}} (\theta +dd^c u)^n = {\bf 1}_{\{u=v\}} (\theta +dd^c v)^n. 
	\]
 \end{prop}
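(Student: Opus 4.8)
The plan is to reduce to the bounded case and then apply Theorem~\ref{thm: geodesic distance intro}. Observe first that it suffices to prove the inequality
\[
{\bf 1}_{\{u=v\}}(\theta+dd^c u)^n \leq {\bf 1}_{\{u=v\}}(\theta+dd^c v)^n,
\]
since the reverse inequality is the standard domination principle: on the set $\{u=v\}$ the function $u$ touches $v$ from below, and where $u<v$ the complement contributes nothing to the left-hand side; a local balayage/comparison argument (or the maximum principle for the Monge-Amp\`ere operator) gives ${\bf 1}_{\{u=v\}}(\theta+dd^c v)^n \leq {\bf 1}_{\{u=v\}}(\theta+dd^c u)^n$ whenever $u\le v$. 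Actually, since $v$ has finite entropy and is thus more regular, the nontrivial direction is to control the mass of $(\theta+dd^c u)^n$ on the contact set by that of $(\theta+dd^c v)^n$; I will aim for both inequalities by an approximation scheme.

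The key step is to introduce the decreasing approximants $u_k := \max(u, v-k)$, which belong to $\cE(X,\theta)$, satisfy $v-k \le u_k \le v$, decrease to $u$, and crucially have $u_k - v$ bounded. On the contact set one has $\{u=v\}\subset\{u_k=v\}$ for every $k$, and conversely $\{u_k = v\} = \{u=v\}$ up to the set $\{u\le v-k\}$ which shrinks to $\{u=-\infty\}$, a pluripolar set carrying no mass for the Monge-Amp\`ere measure of a finite-energy potential. Thus it is enough to prove the identity
\[
{\bf 1}_{\{u_k=v\}}(\theta+dd^c u_k)^n = {\bf 1}_{\{u_k=v\}}(\theta+dd^c v)^n
\]
for each $k$, and then pass to the limit using continuity of the Monge-Amp\`ere operator along decreasing sequences in $\cE(X,\theta)$ (together with the fact that $(\theta+dd^c u_k)^n$ converges to $(\theta+dd^c u)^n$ weakly, and lower semicontinuity considerations to handle the indicator ${\bf 1}_{\{u_k=v\}}\downarrow {\bf 1}_{\{u=v\}}$ on a set of full mass).

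For the bounded case, I would run the geodesic argument: let $w_t$ be the psh geodesic joining $u_k$ to $v$. Since $u_k - v$ is bounded and both endpoints have finite entropy (the entropy of $u_k$ being controlled by that of $v$ via the inequality $(\theta+dd^c u_k)^n \le {\bf 1}_{\{u_k=v\}}(\theta+dd^c v)^n + {\bf 1}_{\{u_k=v-k\}}(\theta+dd^c(v-k))^n$, or more directly by a known entropy bound for maxima), Theorem~\ref{thm: geodesic distance intro} applies and gives that $\int_X |\dot w_t|^p(\theta+dd^c w_t)^n$ is constant in $t$. Comparing the value at the endpoints with the convexity of $t\mapsto w_t$ forces the left and right time-derivatives $\dot w_0^\pm$ to agree $(\theta+dd^c u_k)^n$-a.e.; but $\dot w_0 = v - u_k = 0$ precisely on $\{u_k = v\}$, and the comparison of the boundary Monge-Amp\`ere masses then yields that $(\theta+dd^c u_k)^n$ and $(\theta+dd^c v)^n$ put the same mass on $\{u_k=v\}$, with equality of measures following by applying the same reasoning to $\max(u_k, v - \varepsilon)$ and letting $\varepsilon\to 0$, or by a direct localization.

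The main obstacle I anticipate is the passage to the limit $k\to\infty$: one must ensure that no mass of $(\theta+dd^c u)^n$ concentrated on $\{u=v\}$ is "lost" in the approximation, i.e. that $\lim_k {\bf 1}_{\{u_k=v\}}(\theta+dd^c u_k)^n = {\bf 1}_{\{u=v\}}(\theta+dd^c u)^n$. This requires combining the weak convergence $(\theta+dd^c u_k)^n \weak (\theta+dd^c u)^n$ with the fact that the sets $\{u_k=v\}$ decrease to $\{u=v\}$ modulo a pluripolar set — delicate because weak convergence does not in general respect restriction to a fixed Borel set. I expect this to be handled by a standard argument using that the contact set $\{u=v\}$ is (up to a pluripolar set) where all the relevant measures live, plus the plurifine-topology continuity of Monge-Amp\`ere operators, which makes ${\bf 1}_{\{u=v\}}(\theta+dd^c u_k)^n = {\bf 1}_{\{u=v\}}(\theta+dd^c\max(u,v))^n$-type localizations legitimate.
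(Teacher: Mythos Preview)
Your proposal has two substantial gaps.

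\medskip

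\textbf{First gap: the approximants $u_k=\max(u,v-k)$ need not have finite entropy.} You claim that the entropy of $u_k$ is controlled by that of $v$ via an inequality of the form $(\theta+dd^c u_k)^n \le {\bf 1}_{\{\cdots\}}\theta_v^n + {\bf 1}_{\{\cdots\}}\theta_v^n$, but the maximum principle for $\max(u,v-k)$ gives a \emph{lower} bound, not an upper one. In fact, by plurifine locality, on the quasi-open set $\{u>v-k\}$ one has $(\theta+dd^c u_k)^n = (\theta+dd^c u)^n$, and since $u$ is an arbitrary $\theta$-psh function its Monge--Amp\`ere measure can be singular with respect to $\omega^n$ there. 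Hence $\Ent(\omega^n,\theta_{u_k}^n)=+\infty$ in general, and Theorem~\ref{thm: geodesic distance intro} cannot be invoked. The paper avoids this by approximating $u$ from above by smooth $u_j$ and setting $\varphi_j:=P_\theta(u_j,v)$: the envelope satisfies $(\theta+dd^c\varphi_j)^n \le {\bf 1}_{\{\varphi_j=u_j\}}\theta_{P_\theta(u_j)}^n + {\bf 1}_{\{\varphi_j=v\}}\theta_v^n$ (Lemma~\ref{lem: MA contact}), and both pieces have density, so $\varphi_j\in\Ent(X,\theta)$.

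\medskip

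\textbf{Second gap: the identification of $\{\dot w_0=0\}$ with the contact set.} Even granting finite entropy, your claim that ``$\dot w_0 = v-u_k = 0$ precisely on $\{u_k=v\}$'' is false. Convexity of $t\mapsto w_t$ only gives $0\le \dot w_0 \le v-u_k$, so $\{u_k=v\}\subset\{\dot w_0=0\}$, but the reverse inclusion fails in general. What \emph{is} true is $\{\dot w_1=0\}=\{u_k=v\}$ (since $\dot w_1 \ge v-u_k \ge 0$). The paper handles this asymmetry by an auxiliary construction: for $u=P_\theta(f,v)$ with $f$ smooth, it introduces $h_\lambda:=P_\theta(\lambda f,v)$, runs a second geodesic $\psi_t$ from $u$ to $h_\lambda$, and uses the comparison $\dot\psi_0\le\dot u_0$ (from $\psi_t\le u_t$) together with $\{u=h_\lambda\}=\{u=v\}$ to squeeze the masses and force equality. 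Your sketch (``applying the same reasoning to $\max(u_k,v-\varepsilon)$'') does not supply this mechanism.

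\medskip

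Incidentally, you also have the ``easy'' direction backwards in your opening paragraph: when $u\le v$, Lemma~\ref{lem: max principle} gives ${\bf 1}_{\{u=v\}}\theta_u^n \le {\bf 1}_{\{u=v\}}\theta_v^n$, so it is the inequality $\ge$ that requires work.
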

  Here $(\theta+dd^c u)^n$ is the non-pluripolar Monge-Amp\`ere measure introduced in \cite{BEGZ10}.

 
 \medskip
  
 \noindent {\bf Acknowledgement.}    We warmly thank Tam\'as Darvas and L\'aszl\'o Lempert for reading the first version of this paper and giving several useful comments.


\section{Preliminaries}\label{sec:prelim}

In the whole paper, $(X,\omega)$ is a compact K\"ahler manifold of dimension $n\in \bN^*$.
We assume $\omega$ is normalized so that $\int_X \omega^n=1$. In this section we recall and extend several classical facts in pluripotential theory that will be used in the sequel. 

\subsection{Quasi-plurisubharmonic functions}

Let $\theta$ be a smooth closed real $(1,1)$-form on $X$. A function $u:X \rightarrow \mathbb{R}\cup \{-\infty\}$ is quasi-plurisubharmonic (qpsh) if  locally  $u=\rho +\varphi$ where $\varphi$ is plurisubharmonic (psh) and $\rho$ is smooth. A qpsh function $u$ is $\theta$-psh if $\theta+dd^c u \geq 0$ in the weak sense of currents.  We let $\PSH(X,\theta)$ denote the class of all $\theta$-psh functions on $X$ which are not indentically $-\infty$. The cohomology class $\{\theta\}$ is nef if $\{\theta+\varepsilon \omega\}$ is K\"ahler for all $\varepsilon>0$, and it is big if $\PSH(X,\theta-\varepsilon \omega)$ is not empty for some $\varepsilon>0$. We say that $\theta$ is a big form if its cohomology class $\{\theta\}$ is big.

Throughout this paper we assume that $\{\theta\}$ is big. By Demailly's regularization theorem  \cite{Dem92,Dem94} there exists  $\psi \in \PSH(X,\theta-\varepsilon \omega)$ with analytic singularities, which is smooth in some Zariski open set called the ample locus of  $\{\theta\}$, and denoted by ${\rm Amp}(\theta)$.  Here, a function $f: X\rightarrow \bR \cup \{-\infty\}$ has analytic singularities if it can be written locally as 
\[
f(x) = c \log \left ( \sum_{j=1}^N |f_j|^2 \right ) +  \rho,
\] 
where $c>0$,  $\rho$ is smooth and the functions $f_j$ are holomorphic. 

Given two $\theta$-psh functions $u$ and $v$  we say that $u$ is more singular than  $v$, and we write $u\preceq v$,  if $u\leq v+C$ for some constant  $C$. These two functions have the same singularities, and we write   $u\simeq v$, if $u\preceq v$ and $v\preceq u$. The envelope 
\[
V_{\theta}:= \sup \{u \in \PSH(X,\theta)\; : \; u\leq 0 \}
\]
is a $\theta$-psh function with minimal singularities, it is locally bounded and continuous in $\Amp(\theta)$.

The classical Monge-Amp\`ere capacity (see \cite{BT82}, \cite{Kol98}, \cite{GZ05}) is defined by 
\[
\capa_{\omega}(E) := \sup \left \{ \int_E (\omega+dd^c u)^n \; : \; u\in \PSH(X,\omega),\; -1\leq u \leq 0 \right \}. 
\]
A sequence $u_j$ converges in capacity to $u$ if for any $\varepsilon>0$ we have
\[
\lim_{j\to +\infty} \capa_{\omega}(\{|u_j-u|\geq \varepsilon\}) =0. 
\]
Any quasi-psh function $u$ is quasi-continuous (see \cite{GZbook}, \cite{Kli91}) : for each $\varepsilon>0$ there exists an open set $U$ with $\capa_{\omega}(U)<\varepsilon$ such that $u$ restricted on $X\setminus U$ is continuous. 

A set $E$ is quasi-open (respectively quasi-closed) if for any $\varepsilon>0$ there exists an open  (respectively closed) set $U$ such that  $\capa_{\omega} (E\setminus U) \leq \varepsilon$ and $\capa_{\omega} (U\setminus E) \leq \varepsilon$.  
The weak convergence of measures will be denoted by  $\rightharpoonup$. If $\mu_j\rightharpoonup \mu$, $U$ is open and $V$ is closed then 
$$
\liminf_j \mu_j(U) \geq \mu(U), \qquad \limsup_j \mu_j(V) \leq \mu(V).
$$ 
By the following lemma we can replace the open set $U$ by any quasi-open set and the closed set $V$ by any quasi-closed set, provided that the measures $\mu_j,\mu$ are uniformly dominated by capacity.  
\begin{lemma}\label{lem: convergence quasi open}
	Let $(\mu_j)$ be a sequence of positive Radon measures such that $\mu_j \rightharpoonup \mu$ for some positive measure $\mu$. Assume there exists a continuous function $F: [0,1] \rightarrow [0,+\infty)$ with  $F(0)=0$ such that $\mu_j + \mu \leq F(\capa_{\omega})$.
	Then for any quasi-open set $E$ and quasi-closed set $F$ we have 
	\[
	\liminf_{j\to +\infty}  \mu_j(E)  \geq \mu(E)\; \text{and}\; \limsup_{j\to +\infty} \mu_j(F) \leq \mu(F). 
	\]
\end{lemma}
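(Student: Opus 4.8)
The plan is to prove Lemma \ref{lem: convergence quasi open} by approximating the quasi-open set $E$ from inside by genuine open sets and the quasi-closed set $F$ from outside by genuine closed sets, and then controlling the error with the capacity domination hypothesis. I treat the quasi-open case; the quasi-closed case is symmetric (or follows by passing to complements, noting that $E$ quasi-open means $X\setminus E$ is quasi-closed).

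First I would fix $\varepsilon>0$ and, using the definition of quasi-open, choose an open set $U$ with $\capa_\omega(E\setminus U)\le \varepsilon$ and $\capa_\omega(U\setminus E)\le \varepsilon$. Then for each $j$,
\[
\mu_j(E)\ge \mu_j(U)-\mu_j(U\setminus E)\ge \mu_j(U)-F(\capa_\omega(U\setminus E))\ge \mu_j(U)-F(\varepsilon),
\]
using monotonicity of capacity on subsets (more precisely $\capa_\omega(U\setminus E)\le\varepsilon$ together with the fact that $F$ is nondecreasing — if $F$ is merely continuous with $F(0)=0$ one first replaces $F$ by its nondecreasing majorant $\widetilde F(t)=\sup_{[0,t]}F$, which is still continuous with $\widetilde F(0)=0$). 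Taking $\liminf_j$ and applying the portmanteau inequality for the open set $U$,
\[
\liminf_{j\to+\infty}\mu_j(E)\ge \liminf_{j\to+\infty}\mu_j(U)-F(\varepsilon)\ge \mu(U)-F(\varepsilon)\ge \mu(E)-\mu(E\setminus U)-F(\varepsilon)\ge \mu(E)-2F(\varepsilon),
\]
where in the last step I bound $\mu(E\setminus U)\le \mu(E\setminus U)\le F(\capa_\omega(E\setminus U))\le F(\varepsilon)$ using the hypothesis $\mu\le F(\capa_\omega)$ and the bound $\capa_\omega(E\setminus U)\le\varepsilon$. Letting $\varepsilon\to 0$ and using continuity of $F$ at $0$ gives $\liminf_j\mu_j(E)\ge\mu(E)$.

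For the quasi-closed set $F$, I would symmetrically pick a closed set $V$ with $\capa_\omega(F\setminus V)\le\varepsilon$ and $\capa_\omega(V\setminus F)\le\varepsilon$, write $\mu_j(F)\le\mu_j(V)+\mu_j(F\setminus V)\le\mu_j(V)+\widetilde F(\varepsilon)$, take $\limsup_j$, apply $\limsup_j\mu_j(V)\le\mu(V)$, and bound $\mu(V)\le\mu(F)+\mu(V\setminus F)\le\mu(F)+\widetilde F(\varepsilon)$; then let $\varepsilon\to 0$.

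The only genuinely delicate point is the measurability of $E$ and $F$ with respect to all the $\mu_j$ and $\mu$: a quasi-open set need not be Borel, so $\mu_j(E)$ requires interpretation. Here the capacity domination $\mu_j+\mu\le F(\capa_\omega)$ is what saves us — these measures do not charge pluripolar sets, and a quasi-open set differs from a Borel (indeed $G_\delta$, after intersecting a decreasing sequence of open sets $U_k$ with $\capa_\omega(E\triangle U_k)\to 0$) set by a set of zero capacity, hence of zero $\mu_j$- and $\mu$-measure. So I would include a short preliminary remark establishing that each such measure extends uniquely to all quasi-Borel sets and that $\mu_j(E)=\lim_k\mu_j(U_k)$, which also makes the inequalities above rigorous. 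This measurability bookkeeping, rather than the portmanteau estimates, is where the care is needed.
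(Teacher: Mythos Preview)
Your argument is correct and is the standard approximation-by-open/closed-sets argument that the paper has in mind; note that the paper does not actually write out a proof but simply refers to \cite[Corollary 2.9]{DDL1}, whose proof proceeds exactly along these lines. Your remark on replacing $F$ by its nondecreasing majorant $\widetilde F$ and your discussion of the measurability of quasi-open sets (via the $\mu$-completion of the Borel $\sigma$-algebra, using that $E$ differs from each approximating open set by a set of arbitrarily small capacity, hence arbitrarily small $\mu_j$- and $\mu$-outer measure) are welcome points of rigor that the paper leaves implicit.
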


\begin{lemma}
	\label{lem: convergence bounded}
			Let $(\mu_j)$ be a sequence of positive Radon measures such that $\mu_j \rightharpoonup \mu$ for some positive measure $\mu$. Assume there exists a continuous function $F: [0,1] \rightarrow [0,+\infty)$ with  $F(0)=0$ such that $\mu_j + \mu \leq F(\vol_{\omega})$. If a sequence of Borel functions $(u_j)$ is uniformly bounded and converges in $L^1$ to $u$ then 
		$$\int_X u_j d\mu_j \to \int_X u d\mu.$$ 
\end{lemma}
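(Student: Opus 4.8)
The plan is to reduce the statement to a combination of weak convergence $\mu_j \rightharpoonup \mu$ (which handles continuous test functions) and an equi-continuity/tightness estimate coming from the hypothesis $\mu_j+\mu \le F(\vol_\omega)$ (which controls the error when $u_j$ is replaced by a continuous approximation). Write $M := \sup_j \|u_j\|_{L^\infty(X)} < +\infty$, so $\|u\|_{L^\infty} \le M$ as well. Fix $\varepsilon>0$. First I would choose $\delta>0$ so small that $F(2\delta) < \varepsilon$; this is possible since $F$ is continuous with $F(0)=0$. The point of this $\delta$ is that for \emph{any} Borel set $A$ with $\vol_\omega(A) \le \delta$ we get $\mu_j(A) + \mu(A) \le F(\vol_\omega(A)) \le F(\delta) < \varepsilon$ uniformly in $j$, i.e. the family $\{\mu_j\}\cup\{\mu\}$ is uniformly absolutely continuous with respect to $\vol_\omega$.

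Next I would split the difference as
\[
\int_X u_j\, d\mu_j - \int_X u\, d\mu = \int_X (u_j - g)\, d\mu_j + \left(\int_X g\, d\mu_j - \int_X g\, d\mu\right) + \int_X (g - u)\, d\mu,
\]
where $g \in C(X)$ is a continuous function with $\|g\|_{L^\infty} \le M$, chosen so that $\vol_\omega(\{|u_j - g| \ge \varepsilon\})$ and $\vol_\omega(\{|u - g| \ge \varepsilon\})$ are small. Concretely, since $u_j \to u$ in $L^1(\vol_\omega)$, by passing to the standard tools (Egorov's theorem / Lusin's theorem applied to $u$, together with the $L^1$ convergence) one finds a continuous $g$ such that $\vol_\omega(\{|u-g|\ge\varepsilon\}) \le \delta$ and, for $j$ large, $\vol_\omega(\{|u_j - g| \ge \varepsilon\}) \le 2\delta$; truncating $g$ at level $M$ does not spoil continuity and only improves these estimates. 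On the bad set the integrand is bounded by $2M$ and the measure of the bad set is $\le F(2\delta) < \varepsilon$ by the previous paragraph, while on the good set the integrand is $\le \varepsilon$ and the total mass $\mu_j(X)$ is bounded (by $F(\vol_\omega(X))=F(1)$, say). Hence the first and third terms are each $O((M+1)\varepsilon)$, uniformly in large $j$. The middle term tends to $0$ as $j\to\infty$ because $g$ is continuous and $\mu_j \rightharpoonup \mu$.

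Combining, $\limsup_j \bigl|\int_X u_j\,d\mu_j - \int_X u\,d\mu\bigr| \le C(M+1)\varepsilon$ for an absolute constant $C$, and letting $\varepsilon \to 0$ finishes the proof. The main obstacle I anticipate is purely technical: producing the single continuous function $g$ that is simultaneously close (in the $\vol_\omega$-measure sense) to $u$ \emph{and} to all $u_j$ for $j$ large. This is where one must use the $L^1$ convergence $u_j \to u$ carefully — for instance, first fix $g$ close to $u$ via Lusin, then note $\vol_\omega(\{|u_j-g|\ge 2\varepsilon\}) \le \vol_\omega(\{|u-g|\ge\varepsilon\}) + \vol_\omega(\{|u_j-u|\ge\varepsilon\})$ and control the second term by Chebyshev and $L^1$ convergence. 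Everything else is a routine $3\varepsilon$-argument once the uniform absolute continuity with respect to $\vol_\omega$ is in hand.
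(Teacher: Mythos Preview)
Your argument is correct and is the natural $3\varepsilon$ approach; the paper itself does not spell out a proof but simply refers to \cite[Corollary 2.9]{DDL1}, whose argument is of the same flavor (uniform absolute continuity with respect to $\vol_\omega$ plus approximation by continuous functions). One minor point to tighten: when you write $F(\vol_\omega(A)) \le F(\delta)$ for $\vol_\omega(A)\le\delta$ you are tacitly using that $F$ is nondecreasing, which is not part of the stated hypothesis; since $F$ is continuous with $F(0)=0$, simply choose $\delta$ so that $\sup_{t\in[0,2\delta]}F(t)<\varepsilon$ instead, and the rest of your argument goes through verbatim.
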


\begin{lemma}
	\label{lem: conv measure}
	Assume $(\mu_j)$ and $\mu$ are as in Lemma \ref{lem: convergence quasi open}. Assume $(f_j)$ is a sequence of uniformly bounded quasi-continuous functions converging in capacity to a bounded quasi-continuous function $f$. Then $f_j \mu_j \rightharpoonup f\mu$. 
\end{lemma}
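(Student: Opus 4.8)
The target is Lemma \ref{lem: conv measure}: under the hypotheses of Lemma \ref{lem: convergence quasi open} (so $\mu_j\rightharpoonup\mu$ with $\mu_j+\mu\leq F(\capa_\omega)$ for a continuous $F$ with $F(0)=0$), if $(f_j)$ are uniformly bounded quasi-continuous functions converging in capacity to a bounded quasi-continuous $f$, then $f_j\mu_j\rightharpoonup f\mu$. The plan is to split $f_j\mu_j - f\mu = (f_j-f)\mu_j + (f\mu_j - f\mu)$ and handle the two pieces separately: the first via the capacity bound and convergence in capacity, the second via a quasi-continuity/approximation argument that reduces to the weak convergence $\mu_j\rightharpoonup\mu$ together with Lemma \ref{lem: convergence quasi open}.

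\textbf{Step 1: the error term $(f_j-f)\mu_j$ is negligible.} Fix a test function $g\in C^0(X)$ with $\|g\|_\infty\leq 1$; we must show $\int_X g(f_j-f)\,d\mu_j\to 0$. Let $M=\sup_j\|f_j\|_\infty+\|f\|_\infty<+\infty$. For $\varepsilon>0$ set $E_{j,\varepsilon}=\{|f_j-f|\geq\varepsilon\}$, a quasi-closed set (as a sublevel-type set of quasi-continuous functions). Then
\[
\left|\int_X g(f_j-f)\,d\mu_j\right|\leq \varepsilon\,\mu_j(X) + 2M\,\mu_j(E_{j,\varepsilon}).
\]
Since $\mu_j(X)\leq F(\capa_\omega(X))$ is uniformly bounded and $\mu_j\leq F(\capa_\omega)$, we get $\mu_j(E_{j,\varepsilon})\leq F(\capa_\omega(E_{j,\varepsilon}))$. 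Convergence in capacity gives $\capa_\omega(E_{j,\varepsilon})\to 0$, hence $F(\capa_\omega(E_{j,\varepsilon}))\to 0$ by continuity of $F$ at $0$. Letting $j\to\infty$ and then $\varepsilon\to 0$ kills this term.

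\textbf{Step 2: $f\mu_j\rightharpoonup f\mu$ for the fixed bounded quasi-continuous $f$.} Here the only tools are $\mu_j\rightharpoonup\mu$ and the capacity domination. The natural route is to approximate $f$ in capacity by continuous functions: by quasi-continuity of $f$, for each $\delta>0$ there is an open set $U$ with $\capa_\omega(U)<\delta$ and $f|_{X\setminus U}$ continuous; by Tietze extend $f|_{X\setminus U}$ to $h_\delta\in C^0(X)$ with $\|h_\delta\|_\infty\leq\|f\|_\infty$, so that $\{f\neq h_\delta\}\subset U$ has capacity $<\delta$. For a fixed test function $g\in C^0(X)$, write
\[
\int_X gf\,d\mu_j - \int_X gf\,d\mu = \int_X g(f-h_\delta)\,d\mu_j + \left(\int_X gh_\delta\,d\mu_j - \int_X gh_\delta\,d\mu\right) + \int_X g(h_\delta-f)\,d\mu.
\]
The middle bracket tends to $0$ as $j\to\infty$ since $gh_\delta\in C^0(X)$ and $\mu_j\rightharpoonup\mu$. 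Each of the outer terms is bounded by $2\|g\|_\infty\|f\|_\infty\,\mu_j(\{f\neq h_\delta\})$ resp. by $\mu(\{f\neq h_\delta\})$; since $\{f\neq h_\delta\}$ is quasi-open of capacity $<\delta$ and $\mu_j+\mu\leq F(\capa_\omega)$, both are $\leq 2\|g\|_\infty\|f\|_\infty F(\delta)$ uniformly in $j$. Taking $j\to\infty$ then $\delta\to 0$ gives $\int_X gf\,d\mu_j\to\int_X gf\,d\mu$. Combining Steps 1 and 2 yields $\int_X g\,d(f_j\mu_j)\to\int_X g\,d(f\mu)$ for every $g\in C^0(X)$, i.e. $f_j\mu_j\rightharpoonup f\mu$.

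\textbf{Main obstacle.} The delicate point is Step 2: passing from $\mu_j\rightharpoonup\mu$ (testing against \emph{continuous} functions only) to the integral of the merely quasi-continuous function $f$. This is exactly where the uniform capacity domination $\mu_j+\mu\leq F(\capa_\omega)$ is essential — it controls the mass that the $\mu_j$ could concentrate on the small (capacity-wise) but topologically large set where $f$ fails to be continuous, uniformly in $j$. One must be careful that the set $\{f\neq h_\delta\}$ is genuinely quasi-open/quasi-closed so that the capacity bound applies; this follows since it is contained in the open set $U$ and its complement contains the closed-in-$X\setminus U$ locus of continuity, and in any case we may estimate using $\capa_\omega(\{f\neq h_\delta\})\leq\capa_\omega(U)<\delta$ directly. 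With that in hand the argument is a routine three-$\varepsilon$ estimate; alternatively one may invoke Lemma \ref{lem: convergence quasi open} in the form that masses of quasi-open/quasi-closed sets pass to the limit, but the explicit approximation above is self-contained.
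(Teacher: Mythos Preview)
Your proof is correct. The paper does not give its own proof of this lemma; it simply remarks that the argument is ``very similar to that of \cite[Corollary 2.9]{DDL1}'', and your two-step decomposition (control $(f_j-f)\mu_j$ via convergence in capacity and the uniform bound $\mu_j\leq F(\capa_\omega)$, then control $f\mu_j-f\mu$ by Tietze-approximating the quasi-continuous $f$ by continuous functions off a set of small capacity) is exactly the standard argument that reference contains. One minor cosmetic point: in Step~1 you do not actually need $E_{j,\varepsilon}$ to be quasi-closed, since the estimate $\mu_j(E_{j,\varepsilon})\leq F(\capa_\omega(E_{j,\varepsilon}))$ uses only the set-function domination and holds for any Borel set.
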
 

The proof of Lemma \ref{lem: convergence quasi open}, Lemma \ref{lem: convergence bounded} and Lemma \ref{lem: conv measure}  are very similar to that of \cite[Corollary 2.9]{DDL1}. 

\begin{lemma}\label{lem: entropy vol}
	Assume $\mu$ is a positive Radon measure with finite entropy $\Ent(\omega^n,\mu) \leq C$. Then $\mu\leq F(\vol_{\omega})\leq F(\capa_{\omega})$ for some function $F$ as in Lemma \ref{lem: convergence quasi open} which depends only on $C$. 
\end{lemma}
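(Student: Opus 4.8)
The plan is to prove this by a quantitative form of the de la Vall\'ee--Poussin criterion for uniform integrability. First I would note that finiteness of $\Ent(\omega^n,\mu)$ forces $\mu$ to be absolutely continuous with respect to $\omega^n$, so I may write $\mu=f\,\omega^n$ with $f\geq 0$ a Borel function satisfying $\int_X f\log f\,\omega^n\leq C$. Since $\omega$ is normalized so that $\int_X\omega^n=1$ and since $\max(-t\log t,0)\leq 1/e$ for every $t\geq 0$, splitting $f\log f$ into its positive and negative parts gives the one-sided bound $\int_X (f\log f)_+\,\omega^n\leq C+\frac1e=:C'$, and in particular $C'\geq 0$.

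The core estimate is as follows. Fix a Borel set $E$ and a real number $\lambda>1$, and split the integral of $f$ over $E$ according to whether $f\leq\lambda$ or $f>\lambda$. On the first piece one has $\int f\,\omega^n\leq\lambda\,\vol_\omega(E)$ trivially. On the second piece $f>\lambda>1$, hence $\log f\geq\log\lambda>0$ and therefore $0\leq f\leq (f\log f)/\log\lambda$ there; integrating gives
\[
\int_{E\cap\{f>\lambda\}} f\,\omega^n\ \leq\ \frac{1}{\log\lambda}\int_{\{f>\lambda\}} f\log f\,\omega^n\ \leq\ \frac{1}{\log\lambda}\int_X (f\log f)_+\,\omega^n\ \leq\ \frac{C'}{\log\lambda}.
\]
Adding the two contributions yields
\[
\mu(E)\ \leq\ \lambda\,\vol_\omega(E)+\frac{C'}{\log\lambda}\qquad\text{for all }\lambda>1.
\]

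I would then define $F(t):=\inf_{\lambda>1}\bigl(\lambda t+C'/\log\lambda\bigr)$ for $t\in[0,1]$. As an infimum of affine nondecreasing functions of $t$, $F$ is concave and nondecreasing on $[0,1]$, hence continuous on $(0,1]$; it is nonnegative because $C'\geq 0$; it is finite on $[0,1]$ since $F(t)\leq e\,t+C'$; and choosing $\lambda$ large shows $F(t)\to 0$ as $t\to 0^+$, so $F$ extends continuously by setting $F(0)=0$. Thus $F$ is a function of the type appearing in Lemma \ref{lem: convergence quasi open} and it depends only on $C$. The previous displayed inequality, optimized over $\lambda$, gives $\mu(E)\leq F(\vol_\omega(E))$ for every Borel set $E$. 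Finally, plugging $u\equiv 0$ into the definition of $\capa_\omega$ shows $\vol_\omega(E)\leq\capa_\omega(E)$, and since $\capa_\omega(E)\leq\capa_\omega(X)=\int_X\omega^n=1$ both arguments lie in the domain $[0,1]$ of $F$; monotonicity of $F$ then gives $F(\vol_\omega(E))\leq F(\capa_\omega(E))$, which completes the argument.

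The proof is routine and I do not expect a genuine obstacle. The only points that require a little attention are the standard fact that finite relative entropy implies absolute continuity, and checking that the envelope $F$ constructed above really does meet all the requirements of Lemma \ref{lem: convergence quasi open}, namely continuity up to and including the origin and finiteness on the whole interval $[0,1]$ — both of which rely on the normalization $\int_X\omega^n=1$.
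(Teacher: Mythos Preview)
Your proof is correct and follows essentially the same splitting argument as the paper: decompose $\int_E f\,\omega^n$ according to a threshold on $f$ and control the superlevel part via the entropy bound. The only cosmetic difference is that the paper commits to the specific cutoff $\lambda=\vol_\omega(E)^{-1/2}$, obtaining the explicit $F(t)=C(-\log t)^{-1}+\sqrt{t}$, whereas you optimize over all $\lambda>1$; you are also slightly more careful in passing to $(f\log f)_+$ before bounding by the entropy.
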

\begin{proof}
	Take a Borel set $E\subset X$ with $\vol(E):=\int_E \omega^n\in (0,1]$ and set $A:=\vol(E)^{-1/2}$. A direct computation shows that 
	\begin{eqnarray*}
	\mu(E) &= &  \int_E f\omega^n = \int_{E\cap \{f>A\}} f\omega^n +\int_{E\cap \{f \leq A\}}  f\omega^n \\
	&\leq & \frac{1}{\log A} \int_X f \log f \; \omega^n + A \vol(E)\\
	&\leq & C (-\log (\vol(E)))^{-1} + \vol(E)^{1/2}.
	\end{eqnarray*}
Noting also that $\vol_{\omega}\leq  \capa_{\omega}$, the result follows with $F(t):= C (-\log t)^{-1} + \sqrt{t}$ (that is increasing). 
\end{proof}

Assume $\theta_1,...,\theta_p$, $p\leq n$, are smooth closed real $(1,1)$-forms representing big cohomology classes. Let $u_j \in \PSH(X,\theta_j)$, for $j=1,...,p$. If these functions are locally bounded in the corresponding ample loci then by Bedford-Taylor \cite{BT76,BT82} we can define the Monge-Amp\`ere product
\[
(\theta_1+dd^c u_1) \wedge ... \wedge (\theta_p+dd^c u_p),
\]that is a positive $(p,p)$-current on $\cap_{j=1}^n {\rm Amp}(\theta_j)$. Since the total mass of this current is finite, one can extend it trivially over $X$. As shown by Bedford-Taylor \cite{BT87}, the Monge-Amp\`ere product is local in the plurifine topology: if $U$ is a quasi-open set and $u_j=v_j$ on $U$ for $j=1,...,p$, then 
\[
{\bf 1}_U (\theta_1+dd^c u_1) \wedge ... \wedge (\theta_p+dd^c u_p) = {\bf 1}_U (\theta_1+dd^c v_1) \wedge ... \wedge (\theta_p+dd^c v_p). 
\]We next define the non-pluripolar product  following \cite{BEGZ10}. For each  $t>0$, we consider  $u_{j,t}:= \max(u_j,V_{\theta_j}-t)$ and $U_t:= \cap_{j=1}^n \{u_j>V_{\theta_j} -t\}$. By locality of the Monge-Amp\`ere product with respect to the plurifine topology, the sequence of measures 
\[
{\bf 1}_{U_t} (\theta_1+dd^c u_{1,t})\wedge ... \wedge (\theta_p+dd^c u_{p,t})
\]is increasing in $t$.  The limit as $t\to +\infty$, denoted by $(\theta_1+dd^c u_1)\wedge ... \wedge (\theta_p+dd^c u_p)$, is a positive current on $X$. By Sibony \cite{Sib85} (see also \cite{Dembook}, \cite{BEGZ10}), this current is closed.    In case when $u_1=...=u_n=u$ and $\theta_1=...=\theta_n=\theta$, we obtain the non-pluripolar Monge-Amp\`ere measure of $u$ denoted by $(\theta+dd^c u)^n$ or simply by $\theta_u^n$. 
By construction, $\theta_u^n$ is a positive Radon measure on $X$ with total mass satisfying 
\[
\int_X \theta_u^n \leq \vol(\theta) := \int_X \theta_{V_{\theta}}^n. 
\]
We let $\cE(X,\theta)$ denote the set of all $u\in \PSH(X,\theta)$ with full non-pluripolar Monge-Amp\`ere mass: $\int_X \theta_u^n =\vol(\theta)$. For each $p>0$ we let $\cE^p(X,\theta)$ denote the set of all $u\in \cE(X,\theta)$ such that $E_p(u):= \int_X |u-V_{\theta}|^p \theta_u^n <+\infty$. 

We recall the following global version of the local maximum principle of Bedford-Taylor: 
\begin{lemma}
	\label{lem: max principle}
	Assume $u$ and $v$ are in $\PSH(X,\theta)$. Then 
	\[
	(\theta+dd^c \max(u,v))^n \geq {\bf 1}_{\{u\geq v\}} (\theta+dd^c u)^n + {\bf 1}_{\{u<v\}} (\theta+dd^c v)^n. 
	\]
	 If in addition $u\leq v$, then 
	\[
	{\bf 1}_{\{u=v\}} (\theta+dd^c u)^n \leq {\bf 1}_{\{u=v\}} (\theta+dd^c v)^n. 
	\]
\end{lemma}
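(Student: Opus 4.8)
The plan is to reduce the statement to the classical local maximum principle of Bedford-Taylor on the ample locus $\Amp(\theta)$, using the description of the non-pluripolar product as an increasing limit of truncated, locally bounded Monge-Amp\`ere measures. First I observe that the second assertion follows from the first: if $u\leq v$ then $\max(u,v)=v$ and $\{u\geq v\}=\{u=v\}$, so restricting the first inequality to $\{u=v\}$ (on which the term carried by $\{u<v\}$ vanishes) yields exactly ${\bf 1}_{\{u=v\}}\theta_v^n\geq {\bf 1}_{\{u=v\}}\theta_u^n$. So it suffices to prove the first inequality.

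To that end I would truncate. For $t>0$ put $u_t:=\max(u,V_{\theta}-t)$, $v_t:=\max(v,V_{\theta}-t)$, and $w:=\max(u,v)$, $w_t:=\max(w,V_{\theta}-t)$; note that $\max(u_t,v_t)=\max(u,v,V_{\theta}-t)=w_t$ and that $u_t,v_t,w_t$ have minimal singularities, hence are locally bounded on $\Amp(\theta)$. In a local chart where $\theta=dd^c\rho$ with $\rho$ smooth, the classical Bedford-Taylor maximum principle for locally bounded plurisubharmonic functions applied to $u_t,v_t$ gives, as an inequality between Bedford-Taylor measures on $\Amp(\theta)$,
\[
(\theta+dd^c w_t)^n\ \geq\ {\bf 1}_{\{u_t\geq v_t\}}(\theta+dd^c u_t)^n+{\bf 1}_{\{u_t<v_t\}}(\theta+dd^c v_t)^n.
\]
On $\Amp(\theta)$ these coincide with the corresponding non-pluripolar products $\theta_{u_t}^n$, $\theta_{v_t}^n$, $\theta_{w_t}^n$, and since the latter put no mass on the pluripolar set $X\setminus\Amp(\theta)$, the inequality holds between these measures on all of $X$.

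Now I would multiply by the indicator of the plurifine-open set $U_t:=\{u>V_{\theta}-t\}\cap\{v>V_{\theta}-t\}$: on $U_t$ we have $u_t=u$, $v_t=v$, $w_t=w$, hence $U_t\cap\{u_t\geq v_t\}=U_t\cap\{u\geq v\}$ and $U_t\cap\{u_t<v_t\}=U_t\cap\{u<v\}$, and by plurifine locality of the non-pluripolar product the above becomes
\[
{\bf 1}_{U_t}\,\theta_w^n\ \geq\ {\bf 1}_{U_t\cap\{u\geq v\}}\,\theta_u^n+{\bf 1}_{U_t\cap\{u<v\}}\,\theta_v^n.
\]
Since $U_t$ increases to $\{u>-\infty\}\cap\{v>-\infty\}$ as $t\to+\infty$, and the complement of the latter is pluripolar, hence negligible for $\theta_w^n$, $\theta_u^n$ and $\theta_v^n$, the monotone convergence theorem lets me pass to the limit and conclude $\theta_w^n\geq {\bf 1}_{\{u\geq v\}}\theta_u^n+{\bf 1}_{\{u<v\}}\theta_v^n$, which is the first inequality.

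The recollection of the local maximum principle and the monotone limit are routine. The step I expect to demand the most care is the passage from the local inequality on $\Amp(\theta)$ — valid because the truncated potentials are locally bounded there — to a genuine inequality of non-pluripolar products on $X$, together with the bookkeeping that on $U_t$ replaces $\{u_t\geq v_t\}$ and $\{u_t<v_t\}$ by $\{u\geq v\}$ and $\{u<v\}$; this is exactly where plurifine locality of the non-pluripolar product and the concentration of these measures on the ample locus are used.
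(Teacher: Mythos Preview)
Your argument is correct and is exactly the standard route: reduce to the Bedford--Taylor maximum principle for the truncated (hence locally bounded on $\Amp(\theta)$) potentials, then use plurifine locality of the non-pluripolar product on the sets $U_t=\{u>V_\theta-t\}\cap\{v>V_\theta-t\}$ to pass from $u_t,v_t,w_t$ to $u,v,w$, and finally let $t\to+\infty$ by monotone convergence. The deduction of the second inequality from the first is also fine.

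Note, however, that the paper does \emph{not} give a proof of this lemma; it simply states ``For a proof of this lemma we refer to \cite[Lemma~4.5]{DDL5}.'' So there is nothing to compare against in this paper itself. Your write-up is essentially the argument one finds in that reference (and in \cite{BEGZ10} for the basic setup of non-pluripolar products), so there is no alternative approach to contrast.

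One small remark on presentation: when you say ``on $\Amp(\theta)$ these coincide with the corresponding non-pluripolar products\dots and since the latter put no mass on $X\setminus\Amp(\theta)$, the inequality holds between these measures on all of $X$,'' it may be worth spelling out that $X\setminus\Amp(\theta)$ is an analytic, hence pluripolar, set, so that the trivial extension of the Bedford--Taylor measures of $u_t,v_t,w_t$ agrees with their non-pluripolar Monge--Amp\`ere measures; this is the only place a reader might pause.
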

For a proof of this lemma we refer to \cite[Lemma 4.5]{DDL5}. 
We end this section with the following lemma:
\begin{lemma}\label{MA_ep}
Let $v\in \cE(X, \theta)$ and $u\in \cE^p(X, \theta)$. Assume $\theta_v^n \leq C\theta_u^n $, for some constant $C>0$. Then $v \in \cE^p(X, \theta)$.
\end{lemma}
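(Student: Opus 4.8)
The plan is to reduce the claim to a single integrability statement and to establish it by a truncation argument, the crux being an a priori energy estimate of the type underlying the solvability theory for complex Monge--Amp\`ere equations in finite energy classes. Since $v\in\cE(X,\theta)$ is assumed, what remains to prove is that $E_p(v)=\int_X|v-V_\theta|^p\,\theta_v^n<+\infty$. Write $|v-V_\theta|^p=(v-V_\theta)_+^p+(V_\theta-v)_+^p$. The integrand $(v-V_\theta)_+^p$ is bounded, since $v-\sup_Xv\le V_\theta$, so this part of the integral is finite. For the other part, the pointwise inequality $(V_\theta-v)_+\le (V_\theta-u)_+ +(u-v)_+$, the bound $(a+b)^p\le 2^{p-1}(a^p+b^p)$ for $a,b\ge0$, and the hypothesis $\theta_v^n\le C\theta_u^n$ give
\[
\int_X(V_\theta-v)_+^p\,\theta_v^n\ \le\ 2^{p-1}C\Big(\int_X|u-V_\theta|^p\,\theta_u^n+\int_X(u-v)_+^p\,\theta_u^n\Big)\ =\ 2^{p-1}C\,E_p(u)+2^{p-1}C\!\int_X(u-v)_+^p\,\theta_u^n .
\]
Since $u\in\cE^p(X,\theta)$, the lemma reduces to showing $\int_X(u-v)_+^p\,\theta_u^n<+\infty$.

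To obtain this I would truncate. Let $v_k:=\max(v,u-k)$, so that $v_k\searrow v$ as $k\to+\infty$ and $u-k\le v_k$; then $(u-v_k)_+\le k$, hence $\int_X(u-v_k)_+^p\,\theta_u^n\le k^p\,\vol(\theta)<+\infty$ for each $k$, and by monotone convergence $\int_X(u-v)_+^p\,\theta_u^n=\lim_k\int_X(u-v_k)_+^p\,\theta_u^n$. Thus it is enough to bound these integrals uniformly in $k$. Splitting the domain along the plurifine open set $\{v>u-k\}$ (on which $v_k=v$) and its complement (on which $v_k=u-k$), and using plurifine locality of the non-pluripolar product, one is led to estimating the tail $k^p\,\theta_u^n(\{v\le u-k\})$ and the truncated energy $\int_{\{v>u-k\}}(u-v)^p\,\theta_u^n$ in terms of $E_p(u)$ alone. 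Equivalently, the lemma follows at once from the solvability of $\theta_w^n=\mu$ in $\cE^p(X,\theta)$ for a non-pluripolar measure $\mu$ of total mass $\vol(\theta)$ dominated by $C\theta_u^n$ with $u\in\cE^p(X,\theta)$, together with uniqueness of full mass solutions; see \cite{BEGZ10,GZbook,DDL1}.

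The main obstacle, in either formulation, is precisely this a priori estimate. The inequalities available directly are circular: the comparison principle gives $\int_X(u-v_k)_+^p\,\theta_u^n\le\int_X(u-v_k)_+^p\,\theta_{v_k}^n$, and after decomposing $\theta_{v_k}^n$ via plurifine locality and Lemma~\ref{lem: max principle} and invoking $\theta_v^n\le C\theta_u^n$ one merely recovers an inequality of the same shape, with the constant on the wrong side. Breaking the loop requires the genuine capacity/comparison estimates behind the finite-energy theory --- a Guedj--Zeriahi type interpolation inequality bounding $\int_X(u-v_k)_+^p\,\theta_u^n$ by a strictly sublinear power of itself times a power of $E_p(u)$, equivalently a quantitative decay in $s$ for $\theta_u^n(\{v_k<u-s\})$ --- after which the uniform bound, and hence $v\in\cE^p(X,\theta)$, follow in the usual way.
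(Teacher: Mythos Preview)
The paper itself does not prove this lemma: it refers to \cite{GZ07} for the K\"ahler case, asserts that the proof adapts to big classes, and points to \cite{DV21} and \cite{DNL22}. So there is no in-paper argument to compare against, and in that sense your eventual appeal to the literature is in the same spirit as the paper's treatment.

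Your write-up, however, is a sketch that explicitly names its own gap rather than a proof. Two remarks. First, the ``solvability'' shortcut you offer is essentially the lemma restated: asserting that $\theta_w^n=\mu$ is solvable \emph{in} $\cE^p(X,\theta)$ whenever $\mu\le C\theta_u^n$ with $u\in\cE^p$ is, together with uniqueness in $\cE(X,\theta)$, precisely Lemma~\ref{MA_ep} applied to $\mu=\theta_v^n$. Among the references you invoke, \cite{BEGZ10} and \cite{DDL1} give existence in $\cE(X,\theta)$ (and the $\cE^1$ variational theory), not the $\cE^p$ conclusion for general $p$; the latter is exactly the content of \cite{GZ07} and its big-class adaptations, which is what the paper is citing. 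Second, the direct truncation/comparison route is the correct strategy, and you have correctly isolated the obstruction: a bare application of the comparison principle loops back on itself. What is missing is the Guedj--Zeriahi ``fundamental inequality'' (proved by integration by parts and induction along the mixed products $\theta_\varphi^j\wedge\theta_\psi^{n-j}$), which bounds $\int_X(-\psi)^p\theta_\varphi^n$ by a universal constant times $E_p(\psi)+E_p(\varphi)$ for normalised $\psi,\varphi\le0$; combined with a suitable truncation of $v$ this breaks the loop and yields a uniform bound on the truncated $p$-energies. Spelling that step out would turn your outline into a proof; as written it is an accurate roadmap that locates, but does not resolve, the difficulty.
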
 When $\{\theta\}$ is a K\"ahler class this result was proved by Guedj-Zeriahi in  \cite{GZ07}. Their proof can be easily adapted in the setting of big cohomology classes (see \cite{DV21} for an extension of this result).  We refer to \cite{DNL22} for a different approach. 
\subsection{Quasi-plurisubharmonic envelopes}
For each Lebesgue measurable function  $f$ on $X$, we define
\[
P_{\theta}(f):= \left(\sup \{u\in \PSH(X,\theta) \; : \; u\leq f \; \text{quasi everywhere on}\; X \}\right)^*. 
\]
Here, quasi everywhere means outside a pluripolar set, i.e. a set contained in the $-\infty$-locus of some $v\in \PSH(X,\theta)$. If  $f= \min(u,v)$, we will simply write $P_{\theta}(\min(u,v))=P_{\theta}(u,v)$.

If $f$ is lower semicontinuous (lsc), one can show,  by a balayage process, that the Monge-Amp\`ere measure $(\theta + dd^c P_{\theta}(f))^n$ is supported on the contact set $\{P_{\theta}(f)=f\}$. For more general obstacles $f$,  the following  was proved in \cite{GLZ19}:
\begin{lemma}\label{lem: MA contact}
	If $f$ is quasi lower semicontinuous on $X$ and $P_{\theta}(f) \in \PSH(X,\theta)$, then the non-pluripolar Monge-Amp\`ere measure $(\theta+dd^c P_{\theta}(f))^n$ is supported on the contact set $\{P_{\theta}(f)=f\}$. Moreover, if  $f=\min(u,v)$ with $u,v\in \PSH(X,\theta)$, then  
	\[
	(\theta+dd^c P_{\theta}(u,v))^n \leq {\bf 1}_{\{P_{\theta}(u,v)=u\}} \theta_u^n + {\bf 1}_{\{P_{\theta}(u,v)=v\}} \theta_v^n. 
	\]
\end{lemma}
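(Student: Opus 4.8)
The plan is to establish the two assertions in turn; the statement that $\theta_{P_\theta(f)}^n$ is concentrated on the contact set is the substantive one, and I would prove it by a local balayage argument, while the inequality for $f=\min(u,v)$ then follows quickly from it together with Lemma~\ref{lem: max principle}.

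For the first assertion set $\varphi:=P_\theta(f)$ and $\mu:=\theta_\varphi^n$. Since $\varphi\le f$ quasi everywhere and $\mu$ charges no pluripolar set, $\mu(\{\varphi>f\})=0$, so it suffices to prove $\mu(\{\varphi<f\})=0$. I would first make two reductions: $\mu$ does not charge the pluripolar set $X\setminus\Amp(\theta)$ and on $\Amp(\theta)$ agrees with an honest Bedford-Taylor Monge-Amp\`ere measure of a locally bounded potential, so the claim may be checked locally inside $\Amp(\theta)$; and, approximating the quasi-lsc obstacle $f$ from below, up to sets of arbitrarily small capacity, by continuous functions $f_j$, using that $\theta$-psh envelopes are stable under such approximation (so that the regularized increasing limit of the $P_\theta(f_j)$ recovers $\varphi$) and that, by the weak-convergence results of Section~\ref{sec:prelim} with the masses uniformly dominated by capacity, $\theta_{P_\theta(f_j)}^n\rightharpoonup\mu$, one reduces to $f$ continuous, in which case $\varphi$ is continuous with minimal singularities. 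Then, given $x_0\in\Amp(\theta)$ with $\varphi(x_0)<f(x_0)$, choose a local potential $\chi$ of $\theta$ on a small ball $B=B(x_0,r)\subset\Amp(\theta)$ and let $g$ be the upper envelope of psh functions on $B$ whose boundary limsup is dominated by $\chi+\varphi$ on $\partial B$; then $g\ge\chi+\varphi$ on $B$, $(dd^c g)^n=0$ on $B$, and $\tilde u:=g-\chi$ is $\theta$-psh and maximal on $B$ with $\tilde u\ge\varphi$, and since $\tilde u\to\varphi$ continuously along $\partial B$ the function $\hat\varphi$ equal to $\tilde u$ on $B$ and to $\varphi$ on $X\setminus B$ is $\theta$-psh and $\ge\varphi$. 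The key estimate is $\sup_B\tilde u<\inf_B f$, valid for $r$ small because $\varphi$ and $\chi$ are continuous and $f-\varphi>0$ near $x_0$ (apply the maximum principle to $g$ and let the oscillations of $\varphi$ and $\chi$ on $B$ tend to $0$); hence $\hat\varphi\le f$ everywhere, so $\hat\varphi\le\varphi$ by the extremality defining $P_\theta(f)$, so $\hat\varphi=\varphi$; thus $\varphi=\tilde u$ on $B$ and $\mu(B)=(dd^c g)^n(B)=0$. Covering $\{\varphi<f\}\cap\Amp(\theta)$ by countably many such balls yields $\mu(\{\varphi<f\})=0$.

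For the second assertion, $f=\min(u,v)$ is quasi-continuous, hence quasi-lsc, so the first part applies: with $\varphi:=P_\theta(u,v)$ and $\mu:=\theta_\varphi^n$ one has $\mu=\mathbf 1_{\{\varphi=\min(u,v)\}}\mu$. Since $\{\varphi=\min(u,v)\}\subseteq\{\varphi=u\}\cup\{\varphi=v\}$, this gives $\mu\le\mathbf 1_{\{\varphi=u\}}\mu+\mathbf 1_{\{\varphi=v\}}\mu$. Finally $\varphi\le\min(u,v)\le u$ quasi everywhere, hence $\varphi\le u$ everywhere (two $\theta$-psh functions comparable off a pluripolar set are comparable everywhere), so the contact-set inequality in Lemma~\ref{lem: max principle} gives $\mathbf 1_{\{\varphi=u\}}\mu\le\mathbf 1_{\{\varphi=u\}}\theta_u^n$, and symmetrically $\mathbf 1_{\{\varphi=v\}}\mu\le\mathbf 1_{\{\varphi=v\}}\theta_v^n$; adding these proves the stated bound.

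I expect the main obstacle to be not the balayage step --- classical once the obstacle is continuous and $\varphi$ is continuous with minimal singularities --- but the reduction from a quasi-lsc obstacle to a continuous one: one has to show that $f$ can be approximated from below by continuous functions in such a way that the regularized increasing limit of the envelopes $P_\theta(f_j)$ is again $\varphi$ (this rests on $\theta$-psh envelopes ignoring sets of zero capacity) and that the associated Monge-Amp\`ere measures converge weakly to $\mu$, for which the uniform domination of the masses by capacity recorded in Section~\ref{sec:prelim} is essential. A secondary point is the careful identification, near the non-ample locus, of the non-pluripolar product with an honest Bedford-Taylor product, which is what makes the purely local balayage argument legitimate.
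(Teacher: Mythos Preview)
The paper does not prove this lemma; it records it as a result of \cite{GLZ19} (for bounded $f$) and says the unbounded case ``follows easily by approximation''. So there is no detailed argument in the paper to compare against, but the line indicated there (and carried out in \cite{GLZ19}) is different from yours in one essential respect: the quasi-lsc obstacle is approximated \emph{from above} by a decreasing sequence of genuinely lsc functions $f_j\searrow f$ (the paper even spells this out: ``if $f$ is quasi lsc, there exists a decreasing sequence of lsc functions converging to $f$ quasi everywhere''). For lsc $f_j$ the set $\{P_\theta(f_j)<f_j\}$ is open and the balayage argument is immediate; then $P_\theta(f_j)\searrow P_\theta(f)$, the Monge--Amp\`ere measures converge, and one passes to the limit.

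Your reduction goes in the opposite direction---you approximate $f$ from below by continuous $f_j$---and this is where the gap lies. From $f_j\le f$ you only get $P_\theta(f_j)\le P_\theta(f)$, and the claim that the regularized increasing limit of the $P_\theta(f_j)$ recovers $P_\theta(f)$ is exactly the hard point: a competitor $u\le f$ q.e.\ need not satisfy $u\le f_j$, so there is no a priori reason for $u\le\sup_j P_\theta(f_j)$. The slogan ``envelopes ignore sets of zero capacity'' does not close this, because the obstruction is not a small-capacity exceptional set but the pointwise gap $f-f_j$ on all of $X$. In short, the monotone approximation should run downwards on the obstacle, not upwards; once you make that switch, the balayage step becomes the classical one for lsc obstacles (no continuity of $\varphi$ needed, since $\{\varphi<f_j\}$ is already open), and the limit step is the standard decreasing-convergence of non-pluripolar Monge--Amp\`ere measures.

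Your derivation of the second assertion from the first, via Lemma~\ref{lem: max principle}, is correct and is the intended argument.
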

Here we say that $f$ is quasi lower semicontinuous if for each $\varepsilon>0$ there exists an open set $U$ of small capacity $\capa_{\omega}(U)<\varepsilon$, such that $f|_{X\setminus U}$ is lower semicontinuous. If $f$ is quasi lsc, there exists a decreasing sequence of lsc functions converging to $f$ quasi everywhere. 
We underline that in \cite{GLZ19}, it was assumed that $f$ is bounded but the general result as stated above follows easily by approximation.

We also stress that the regularity of $f$ in Lemma \ref{lem: MA contact} is necessary. As a counterexample, we take $f$ to be $0$ in some small open ball $B$ and  $+\infty$ on $X\setminus B$. If $(\theta+dd^c P_{\theta}(f))^n$ is supported on the contact set $D=\{P_{\theta}(f) =f\}\subset B$ then $\int_X (\theta+dd^c P_{\theta}(f))^n \leq \int_B \theta^n$.  But $P_{\theta}(f)\simeq V_{\theta}$ because $V_{\theta}\leq P_{\theta}(f) \leq V_{\theta,B} \leq C$. We recall that 
$$ V_{\theta,B}:= \sup\{ u\in \PSH(X, \theta) \,:\, u\leq 0 \; {\rm in }\; B\}, $$
thus $P_\theta(f) \leq  V_{\theta,B}$ by definition. So the total mass $\int_X (\theta+dd^c P_{\theta}(f))^n$ must be the volume of  $\{\theta\}$.

\begin{lemma}
	\label{lem: envelope big}
	Assume $\{\eta\}$, $\{\theta\}$ are big classes and $\eta \geq \theta$. If $u\in \cE(X,\eta)$ then $P_{\theta}(u) \in \cE(X,\theta)$. If moreover $ u\in \cE^p(X,\eta)$ then $P_{\theta}(u) \in \cE^p(X,\theta)$.
\end{lemma}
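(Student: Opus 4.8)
\textbf{Set-up.} Since $\eta\geq\theta$ as forms, $\PSH(X,\theta)\subseteq\PSH(X,\eta)$, hence $V_\theta\leq V_\eta$; moreover, for any $\phi\in\PSH(X,\theta)$ with minimal singularities one has $(\theta+dd^c\phi)^n\leq(\eta+dd^c\phi)^n$ as measures, by expanding $(\theta+dd^c\phi)+(\eta-\theta)$ with the binomial formula on $\Amp(\theta)$ (where $\phi$ is locally bounded) and using that both products are non-pluripolar. As $\cE$, $\cE^p$ and $f\mapsto P_\theta(f)$ are unaffected by adding a constant to $u$, I may assume $\sup_X u=0$, so $u\leq V_\eta$. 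I then set $u^s:=\max(u,V_\eta-s)\downarrow u$; these have minimal singularities, satisfy $V_\eta-s\leq u^s\leq V_\eta$ (so $\int_X(\eta+dd^cu^s)^n=\vol(\eta)$), and since $u\in\cE(X,\eta)$ they lose no mass, i.e. $(\eta+dd^cu^s)^n\rightharpoonup(\eta+dd^cu)^n$ and $(\eta+dd^cu^s)^n(\{u\leq V_\eta-s\})\to0$. Put $w^s:=P_\theta(u^s)$ and $w:=P_\theta(u)$. Monotonicity of $P_\theta(\cdot)$ and the identity $P_\theta(V_\eta)=V_\theta$ (immediate from $V_\theta\leq V_\eta$) give $V_\theta-s\leq w^s\leq V_\theta$, so $w^s$ has minimal singularities, $\int_X(\theta+dd^cw^s)^n=\vol(\theta)$, and $w^s\downarrow w$.

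\textbf{No $\theta$-mass is lost, and $w\not\equiv-\infty$.} By Lemma \ref{lem: MA contact}, $(\theta+dd^cw^s)^n$ is carried by $\mathcal C^s:=\{w^s=u^s\}$, and on $\mathcal C^s$ the local maximum principle (Lemma \ref{lem: max principle}) in the class $\eta$, together with the comparison above, yields
\[
{\bf 1}_{\mathcal C^s}(\theta+dd^cw^s)^n\ \leq\ {\bf 1}_{\mathcal C^s}(\eta+dd^cw^s)^n\ \leq\ {\bf 1}_{\mathcal C^s}(\eta+dd^cu^s)^n.
\]
Splitting $\mathcal C^s$ into $\{u>V_\eta-s\}$ and $\{u\leq V_\eta-s\}$: on the second piece the $(\theta+dd^cw^s)^n$-mass is $\leq(\eta+dd^cu^s)^n(\{u\leq V_\eta-s\})\to0$, so $A_s:={\bf 1}_{\mathcal C^s\cap\{u>V_\eta-s\}}(\theta+dd^cw^s)^n$ has $\int_X A_s\to\vol(\theta)$. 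There $u^s=u$, hence by plurifine locality $A_s\leq(\eta+dd^cu)^n$, and $A_s$ is supported on $\{w^s=u\}\subseteq\{u\leq\sup_X w^s\}$, so $\int_X A_s\leq(\eta+dd^cu)^n(\{u\leq\sup_X w^s\})$. If $w\equiv-\infty$ then $\sup_X w^s\to-\infty$ and the right side tends to $(\eta+dd^cu)^n(\{u=-\infty\})=0$ by non-pluripolarity, contradicting $\int_X A_s\to\vol(\theta)>0$; hence $w\not\equiv-\infty$. Now pass to a weak limit $A_s\rightharpoonup\nu$ along a subsequence: $\int_X\nu=\vol(\theta)$, and since $(\eta+dd^cu^s)^n-A_s\geq0$ with $(\eta+dd^cu^s)^n\rightharpoonup(\eta+dd^cu)^n$ losing no mass, $\nu\leq(\eta+dd^cu)^n$, so $\nu$ charges no pluripolar set. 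On the other hand, by the standard behaviour of Monge--Amp\`ere measures along decreasing sequences, the weak limit of $(\theta+dd^cw^s)^n$ equals $(\theta+dd^cw)^n$ plus a nonnegative measure carried by a pluripolar set; it dominates $\nu$ and has total mass $\vol(\theta)=\int_X\nu$, hence coincides with $\nu$. Comparing the parts carried by $\{w>-\infty\}$ gives $(\theta+dd^cw)^n=\nu$, so $\int_X(\theta+dd^cw)^n=\vol(\theta)$, i.e. $w\in\cE(X,\theta)$.

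\textbf{The $\cE^p$ refinement.} I run the same comparison on the bounded envelopes. Since $(\theta+dd^cw^s)^n$ lives on $\mathcal C^s$ and $w^s=u^s\leq V_\theta$ there,
\[
E_p(w^s)=\int_X|w^s-V_\theta|^p(\theta+dd^cw^s)^n\ \leq\ \int_{\mathcal C^s}(V_\theta-u^s)^p(\eta+dd^cu^s)^n.
\]
On $\{u>V_\eta-s\}$ one has $u^s=u\leq V_\theta\leq V_\eta$, so $(V_\theta-u^s)^p\leq|u-V_\eta|^p$ and, by plurifine locality, that contribution is $\leq\int_X|u-V_\eta|^p(\eta+dd^cu)^n<+\infty$ because $u\in\cE^p(X,\eta)$. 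On $\{u\leq V_\eta-s\}$ one has $(V_\theta-u^s)^p\leq(V_\eta-u^s)^p$ and that contribution equals $s^p\,(\eta+dd^cu^s)^n(\{u\leq V_\eta-s\})\leq\int_{\{u\leq V_\eta-s\}}|u-V_\eta|^p(\eta+dd^cu)^n\to0$, again since $u\in\cE^p(X,\eta)$. Hence $\sup_sE_p(w^s)<+\infty$; as $w^s\downarrow w$ with $|w^s-V_\theta|^p\uparrow|w-V_\theta|^p$ and $(\theta+dd^cw^s)^n\rightharpoonup(\theta+dd^cw)^n$ without mass loss (second block), a Fatou-type inequality gives $E_p(w)\leq\liminf_sE_p(w^s)<+\infty$, so $P_\theta(u)\in\cE^p(X,\theta)$.

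I expect the decisive step to be the one in the second block: transferring the ``no loss of mass'' property from $u$ in the class $\{\eta\}$ to its envelope $w=P_\theta(u)$ in the class $\{\theta\}$, together with the a priori non-obvious fact $w\not\equiv-\infty$. This is exactly where the hypothesis $u\in\cE(X,\eta)$ enters, and it rests on the contact-set description of $(\theta+dd^cw^s)^n$, the mixed maximum principle in the class $\eta$, and the control of mass that might escape to pluripolar sets along $w^s\downarrow w$. A minor additional point is the careful set-up of $(\theta+dd^c\phi)^n\leq(\eta+dd^c\phi)^n$ for non-pluripolar products, since these are built from the distinct model potentials $V_\theta$ and $V_\eta$.
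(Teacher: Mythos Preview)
Your argument is essentially correct, but it is considerably more elaborate than what the paper does, and a couple of steps deserve a closer look.

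\medskip

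\textbf{Comparison with the paper's proof.} For the $\cE$ statement the paper simply refers to \cite[Lemma~2.7]{Lu21}, while you give a self-contained proof via the canonical approximants $u^s=\max(u,V_\eta-s)$ and their envelopes $w^s=P_\theta(u^s)$. For the $\cE^p$ statement the paper's argument is much shorter and avoids approximation entirely: once one knows $P_\theta(u)\in\cE(X,\theta)$, Lemma~\ref{lem: MA contact} applied directly to the obstacle $u$ (which is quasi-continuous) gives that $(\theta+dd^cP_\theta(u))^n$ is carried by $\mathcal{C}=\{P_\theta(u)=u\}$; on $\mathcal{C}$ one has $|P_\theta(u)-V_\theta|\leq|u-V_\eta|$ and $(\theta+dd^cP_\theta(u))^n\leq(\eta+dd^cP_\theta(u))^n$, and then Lemma~\ref{lem: max principle} (applied to $P_\theta(u)\leq u$ in the class $\eta$) yields ${\bf 1}_{\mathcal{C}}(\eta+dd^cP_\theta(u))^n\leq{\bf 1}_{\mathcal{C}}(\eta+dd^cu)^n$, so $E_p(P_\theta(u))\leq\int_X|u-V_\eta|^p\eta_u^n<+\infty$. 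Your route reproves this by bounding $E_p(w^s)$ uniformly and passing to the limit; both work, but the paper's buys brevity while yours buys a self-contained treatment of the $\cE$ part.

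\medskip

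\textbf{Two points to tighten.} First, in your second block the assertion ``the weak limit of $(\theta+dd^cw^s)^n$ equals $(\theta+dd^cw)^n$ plus a nonnegative measure carried by a pluripolar set'' is correct but not quite ``standard'' as stated; it requires the observation that on each $\{w>V_\theta-t\}\cap\Amp(\theta)$ the functions $w^s$ are locally bounded and decrease to $w$, so Bedford--Taylor gives local weak convergence there, forcing any excess mass into $\{w=-\infty\}\cup(X\setminus\Amp(\theta))$. You should say this explicitly. Second, your ``Fatou-type inequality'' $E_p(w)\leq\liminf_sE_p(w^s)$ is best justified by invoking the standard characterization of $\cE^p(X,\theta)$ via decreasing approximants with uniformly bounded $E_p$ (as in \cite{BEGZ10,GZ07}), rather than by an ad hoc semicontinuity argument, since the integrands $|w^s-V_\theta|^p$ are only quasi-continuous and you have not checked that $(\theta+dd^cw^s)^n$ is uniformly dominated by capacity.
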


\begin{proof}
	Without loss of generality we can assume that $u\leq 0$. The proof of the first statement is identical to that of \cite[Lemma 2.7]{Lu21}. We next prove the last one. Since $\theta\leq \eta$, then $V_\theta\leq V_\eta$ and $(\theta+dd^c P_\theta(u))^n$ is supported on the contact set $\mathcal{C}:= \{P_{\theta}(u)=u\}$ (Lemma \ref{lem: MA contact}), we have
	\[
	\int_X |P_\theta(u)-V_{\theta}|^p (\theta+dd^c P_{\theta}(u))^n \leq  \int_{\mathcal{C}}|u-V_{\eta}|^p(\eta+dd^c P_{\theta}(u))^n.
	\]
	Now, Lemma \ref{lem: max principle} ensures that 
	\[
	\int_{\mathcal{C}}|u-V_{\eta}|^p(\eta+dd^c P_{\theta}(u))^n \leq \int_{\mathcal{C}}|u-V_{\eta}|^p(\eta+dd^c u)^n<+\infty.
	\]
	This proves the last statement. 
\end{proof}

 \subsection{Plurisubharmonic geodesics}
Thanks to the discovery of Semmes and Donaldson, the geodesic equation can be understood in the weak sense of measures using pluripotential theory. An advantage of this interpretation is that it also applies in the context of big cohomology classes in which case an approximation with K\"ahler classes is impossible.  We next recall the construction in \cite{DDL1} following Berndtsson's idea \cite{Bern15}. 

For a curve $[0,1] \ni t \mapsto u_t \in \PSH(X,\theta)$  we define 
\begin{equation}\label{eq: complexified curve}
X\times D \ni (x,z) \mapsto U(x,z) := u_{\log |z|}(x),	
\end{equation}where  $D= \{z\in \bC, \; 1 < |z| < e \}$ and $\pi: X\times D \rightarrow X$ is the projection on the first factor. 
\begin{definition}
	We say that $t\mapsto u_t$ is a subgeodesic if $(x,z) \mapsto U(x,z)$ is a $\pi^{*}\theta$-psh function on $X\times D$.
\end{definition}

\begin{definition}
	For  $\varphi_0,\varphi_1 \in \PSH(X,\theta)$, we let $\mathcal{S}_{[0,1]}(\varphi_0,\varphi_1)$ denote the set of  all subgeodesics $[0,1] \ni t \mapsto u_t$ such that $\limsup_{t\to 0} u_t\leq \varphi_0$ and $\limsup_{t\to 1} u_t\leq \varphi_1$. 
\end{definition}

Let $\varphi_0,\varphi_1 \in \PSH(X,\theta)$. For $(x,z)\in X\times D$ we define
$$
\Phi(x,z) :=  \sup \{ U(x,z) \; :\;  U \in \mathcal{S}_{[0,1]}(\varphi_0,\varphi_1) \}. 
$$
The curve $t\mapsto \varphi_t$ constructed from $\Phi$ via \eqref{eq: complexified curve} is called the plurisubharmonic (psh) geodesic segment connecting $\varphi_0$ and $\varphi_1$.  

Geodesic segments connecting two general $\theta$-psh functions may not exist. If $\varphi_0, \varphi_1 \in \mathcal{E}^p(X,\theta)$, it was shown in \cite[Theorem 2.13]{DDL1} that $P(\varphi_0,\varphi_1) \in \mathcal{E}^p(X,\theta)$. Since $P(\varphi_0,\varphi_1) \leq \varphi_t$, we obtain that $t \to \varphi_t$ is a curve in $\mathcal{E}^p(X,\theta)$. Also, each subgeodesic segment is convex in $t$:  
\[
\varphi_t\leq \left (1-t\right )\varphi_0 + t\varphi_1, \ \forall t\in [0,1]. 
\]
Consequently the upper semicontinuous regularization (with respect to both variables $x,z$) of $\Phi$ is again in $\mathcal{S}_{[0,1]}(\varphi_0,\varphi_1)$, hence so is $\Phi$. If $\varphi_0,\varphi_1$ have the same singularities then the geodesic $\varphi_t$ is Lipschitz on $[0,1]$ (see \cite[Lemma 3.1]{DDL1}): 
\begin{equation}
	\label{eq: Lip}
	|\varphi_t-\varphi_s| \leq |t-s| \sup_{X} |\varphi_0-\varphi_1|, \ \forall t,s \in [0,1]. 
\end{equation}

The Darvas $d_1$ metric has a very special property allowing to express $d_1(u,v)$ exclusively in terms of the Monge-Amp\`ere energy: 
\[
d_1(u,v) = E(u) +E(v)- 2 E(P(u,v)), \qquad u,v\in \cE^1(X,\theta),
\]
where $E(u)=E(\theta;u,V_{\theta})$ is defined in the next subsection. 
In \cite{DDL3} we take this as the definition of $d_1$ for potentials in a big cohomology class $\theta$, and prove that $(\cE^1(X,\theta), d_1)$ is a complete geodesic metric space.  


 \subsection{The Mabuchi K-energy}\label{K-en}
 
  The scalar curvature of a K\"ahler metric  $\omega$ is the trace of its Ricci form: 
\[
{\rm Scal}(\omega) := n \frac{{\rm Ric}(\omega) \wedge \omega^{n-1}}{\omega^n} \in C^{\infty}(X,\bR). 
\]
A K\"ahler metric $\omega$ is cscK (constant scalar curvature)  if the scalar curvature  ${\rm Scal}(\omega)= \bar{S}$ is constant. A simple application of Stokes theorem ensures that this constant depends only on the first Chern class  and $\{\omega\}$: 
\[
\bar{S} = \bar{S}_{\omega} = n\vol(\omega)^{-1}  c_1(X) \cdot \{\omega\}^{n-1}, 
\]
where $\vol(\omega)= \int_X \omega^n$ is the volume of $\omega$. 

The K-energy $\cM_\omega$, first introduced by Mabuchi \cite{Mab87}, is defined so that cscK metrics are critical points. Chen and Tian have then found  an explicit formula for $\cM_\omega$ that we now recall.  The Monge-Amp\`ere energy (or Aubin-Mabuchi energy) is defined as 
\begin{equation*}
E(\omega; u, v)= \frac{1}{(n+1)\vol(\omega)} \sum_{j=0}^{n} \int_X (u-v) \omega_u^{j} \wedge \omega_v^{n-j}\quad u,v \in \cE^1(X,\omega). 
\end{equation*}
Given a closed smooth $(1,1)$-form $\chi$, the $\chi$-contracted Monge-Amp\`ere energy is defined as
$$
 E_{\chi}(\omega; u,v) = \frac{1}{n\vol(\omega)}\sum_{j=0}^{n-1}\int_X (u-v) {\chi} \wedge \omega_u^{j} \wedge \omega_v^{n-1-j}\quad  u,v \in \cE^1(X,\omega).
$$
The Mabuchi K-energy (w.r.t. $\{\omega\}$) is defined as 
$$
\cM_{\omega}(u) := \bar{S}_{\omega} E(\omega;u,0) - n E_{\Ric(\omega)}(\omega;u,0) + \Ent(\omega^n,\omega_u^n), \; u \in \cE^1(X,\omega), 
$$
where given two positive Radon measures $\mu, \nu$, the relative entropy $\Ent(\nu,\mu)$ is defined as  
\[
\Ent(\nu,\mu) := \int_X \log\left (\frac{d\mu}{d\nu}\right) d\mu,
\]
if $\mu$ is absolutely continuous with respect to $\nu$, and $+\infty$ otherwise. 

Observe that, choosing a different closed real $(1,1)$-form representing the K\"ahler class, $\omega_v\in \{\omega\}$ (not necessarily a K\"ahler form), any $\omega$-psh function $u$ can be written as $u=\f-v$ where $\f\in \PSH(X, \omega_v)$ and $\cM_\omega$ can be then re-written as
\begin{equation}\label{mabuchiNotK}
\cM_{\omega}(\f) := \bar{S}_{\omega} E(\omega_v;\f,v) - n E_{\Ric(\omega)}(\omega_v;\f,v) + \Ent(\omega_v^n,\omega_\f^n), \quad \f \in \cE^1(X,\omega_v).
\end{equation}

 Mabuchi \cite{Mab86,Mab87}  has proved that $\cM_{\omega}$ is convex along smooth geodesics in $\cH$.  Smooth geodesic rays can be constructed using the flow of real holomorphic vector fields \cite[Theorem 3.5]{Mab87}. Unfortunately, even if  $u_0$ and $u_1$ are in $\cH$, the weak geodesic segment $(u_t)_{t \in [0,1]}$ is not necessarily in $\cH$.  In a breakthrough paper of Berman-Berndtsson \cite{BB17}  (see also \cite{CLP16} for a different proof) the authors showed that  $t \mapsto \cM(u_t)$ is convex and continuous on $[0,1]$ if $u_0$ and $u_1$ are in $\cH$.  Building on the explicit formula of Chen and Tian, it was shown in \cite{BDL17} that the extended Mabuchi K-energy  $\cM_{\omega}$  is  convex and continuous in $[0,1] $ along psh geodesics in $\cE^1(X,\omega)$.  
 
 \subsection{Finite entropy potentials}
Let $\theta$  be a closed smooth real $(1,1)$-form representing a big cohomology class. We let $\Ent(X,\theta)$ denote the set of all $u \in \cE(X,\theta)$ with $\Ent(\omega^n, \theta_u^n) <+\infty$. In confirming a conjecture of Aubin \cite{Aub84}, it was proved in \cite{DGL20} that $\Ent(X,\omega) \subset \cE^{\frac{n}{n-1}}(X,\omega)$. As will be shown below, one can extend these results for big cohomology classes.

  \begin{theorem}
 	\label{thm: Moser-Trudinger inequality} 
 	Fix $p >0$. Then there exist $c>0$, $C>0$ depending on $X$, $\theta$, $n$, $p$ such that, for all $\varphi\in \mathcal{E}^p(X,\theta)$ with $\sup_X \varphi=-1$,  we have
 	\[
 	\int_X \exp \left ( c  |E_p(\varphi)|^{-\frac{1}{n}} |\varphi-V_{\theta}|^{1+\frac{p}{n}}\right )\omega^n \leq C.
 	\]
 \end{theorem}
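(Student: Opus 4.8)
The plan is to reduce the big-class statement to the known Kähler case by a careful modification argument, and otherwise to follow the standard scheme used for Moser–Trudinger-type inequalities in pluripotential theory (as in \cite{DGL20}, and going back to the $d_p$-geometry estimates of \cite{Dar15}, \cite{BBEGZ}). Concretely, since $\{\theta\}$ is big, fix $\varepsilon_0>0$ with $\PSH(X,\theta-\varepsilon_0\omega)\neq\emptyset$ and, after scaling $\omega$, assume $\theta \geq \varepsilon_0\omega$ so that $\theta$ itself dominates a Kähler form; alternatively pass to a resolution where $\{\theta\}$ becomes a sum of a Kähler class and an effective divisor. The first step is to normalize: for $\varphi\in\cE^p(X,\theta)$ with $\sup_X\varphi=-1$, set $\psi := (\varphi - V_\theta)/\|\varphi-V_\theta\|$ in a suitable sense; more usefully, work directly with $w := V_\theta - \varphi \geq 0$ and the quantity $E_p(\varphi)=\int_X w^p\,\theta_\varphi^n$.

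Second, I would establish the key one-dimensional/coarea estimate: controlling $\vol_\omega(\{w > s\})$ in terms of $E_p(\varphi)$ and the capacity, using the comparison principle for the non-pluripolar Monge–Ampère operator in the big setting (available from \cite{BEGZ10}, together with the envelope and contact-set results recalled in the excerpt, in particular Lemma \ref{lem: MA contact} and Lemma \ref{lem: max principle}). The mechanism is: $\capa_\omega(\{w>s\}) \lesssim s^{-p} E_p(\varphi)$ for $s$ large (a Chebyshev-type inequality against the Monge–Ampère measure), and then the uniform integrability / volume-capacity bound (Lemma \ref{lem: entropy vol} is the analogous phenomenon, but here one needs the sharper Skoda-type exponential integrability of a single $\theta$-psh function) converts a capacity decay $\capa_\omega(\{w>s\}) \lesssim e^{-\alpha s^{1+p/n}\cdot c}$ into the claimed exponential integral bound. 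The quantitative input that makes the exponent $1+p/n$ appear is exactly the interplay between the $L^p$-energy scaling and the dimension $n$ in the Monge–Ampère comparison, precisely as in \cite[Theorem 3]{DGL20}; one feeds in $t\mapsto$ the psh geodesic or the partial envelopes $P_\theta(\max(\varphi, V_\theta - s))$ and tracks how the energy $E_p$ redistributes across level sets.

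Third, integrate: writing $\int_X \exp(c\,|E_p(\varphi)|^{-1/n}\,w^{1+p/n})\,\omega^n = \int_0^\infty e^\tau \,\vol_\omega(\{c|E_p|^{-1/n}w^{1+p/n} > \tau\})\,d\tau$ and plugging in the level-set decay from the previous step yields a convergent integral bounded by a constant $C$ depending only on $X,\theta,n,p$, provided $c$ is chosen small enough (depending on the implied constants, which in the big case absorb the loss coming from the singularities of $V_\theta$ along $X\setminus\Amp(\theta)$). I expect the main obstacle to be precisely this last point: in the Kähler case $V_\omega=0$ is bounded and all estimates are global and clean, whereas for a big (even big and nef) class $V_\theta$ has minimal but genuinely unbounded singularities, so every capacity/volume estimate must be run relative to $V_\theta$, and one must check that the Demailly regularization $\psi\in\PSH(X,\theta-\varepsilon\omega)$ with analytic singularities, smooth on $\Amp(\theta)$, can be used to localize the argument to $\Amp(\theta)$ without the exponential integral picking up extra mass from the pluripolar set where things degenerate. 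Controlling the uniformity of the constants as the singularity type of $\varphi$ varies within $\cE^p(X,\theta)$ — rather than the mere finiteness for a fixed $\varphi$ — is the delicate part, and I would handle it by first proving the bound for $\varphi$ with the same singularities as $V_\theta$ (where \eqref{eq: Lip} and the Kähler-case techniques transfer) and then approximating a general $\varphi\in\cE^p(X,\theta)$ from above by $\max(\varphi, V_\theta - j)$, using monotone convergence of both $E_p$ and the exponential integrand together with Lemma \ref{lem: MA contact} to pass to the limit.
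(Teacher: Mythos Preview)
Your proposal takes a different route from the paper's and, as written, has concrete gaps. The paper does not reduce to the K\"ahler case at all; it works directly in the big setting via a power-transform/envelope trick adapted from \cite{DGL20}. With $q=1+p/n$ one sets $\psi := -a(V_\theta-\varphi)^q$, forms the envelope $u:=P_\theta(\psi+V_\theta)$, and checks that $v:=-a^{-1/q}(V_\theta-u)^{1/q}+V_\theta$ is $\theta$-psh with $v\leq\varphi$ and equality on the contact set $\{u=\psi+V_\theta\}$. Lemma~\ref{lem: max principle} and Lemma~\ref{lem: MA contact} then yield ${\bf 1}_G\,\theta_u^n \leq a^nq^n(V_\theta-\varphi)^p\theta_\varphi^n$ on a suitable set $G$; choosing $a^n=(2q^nE_p(\varphi))^{-1}$ forces $\int_G\theta_u^n\leq 1/2$, hence $\sup_X u$ is bounded below in terms of $a$. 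The ordinary Skoda integrability applied to $u$ (not to $\varphi$) then unwinds to the exponential bound for $|\varphi-V_\theta|^{1+p/n}$. The exponent $1+p/n$ thus enters through the power $q$ in the transform, not through any level-set bookkeeping.

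Two steps in your sketch would fail. First, bigness of $\{\theta\}$ does not allow $\theta\geq\varepsilon_0\omega$ as forms: the condition $\PSH(X,\theta-\varepsilon_0\omega)\neq\emptyset$ concerns the \emph{class}, not the representative, and no scaling of $\omega$ fixes this; your reduction to the K\"ahler case is illusory (and, as the paper shows, unnecessary). Second, Chebyshev against $\theta_\varphi^n$ gives $\theta_\varphi^n(\{w>s\})\leq s^{-p}E_p(\varphi)$, which bounds the wrong measure; the capacity estimate you actually need is $\capa_\omega(\{w>s\})\lesssim E_p(\varphi)\,s^{-(n+p)}$ (exponent $n+p$, not $p$), and that requires the iterated comparison-principle argument, not a one-line Chebyshev. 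With the corrected exponent the volume--capacity inequality $\vol\lesssim\exp(-c\,\capa^{-1/n})$ does produce the decay $\exp(-c\,s^{1+p/n}E_p(\varphi)^{-1/n})$, so a level-set route is viable in principle, but the key quantitative input is precisely the step you glossed over, and carrying it out in the big case (relative to $V_\theta$) is not lighter than the paper's direct argument.
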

  \begin{proof}
 	By approximation we can assume that $\varphi$ has minimal singularities. For notational convenience we set $q=\frac{n+p}{n}>1$, $\psi:=-a(-\varphi+V_{\theta})^q$ and $u := P_{\theta}(\psi+V_{\theta})$, where $a>0$ is a small constant suitably chosen below. Observe that, since $\varphi$ has minimal singularities, $\psi$ is bounded and as consequence $u$ has minimal singularities as well. Moreover, by construction $u\leq V_\theta$.
 	Note also that $E_p(\varphi) = \int_X(-\varphi+V_{\theta})^p \theta_{\varphi}^n \geq \vol(\theta)$ because $\varphi\leq V_{\theta}-1$. For simplicity we assume that $\vol(\theta)=1$.   A direct computation in $\Omega$, the ample locus of $\theta$, shows that the function $v:=-a^{-1/q}(V_{\theta}-u)^{1/q} + V_{\theta}$ is $\theta$-psh with minimal singularities and 
 	\begin{flalign*}
 		(\theta+dd^c v) &\geq \left (1- a^{-1/q}q^{-1} (V_{\theta}-u)^{(1-q)/q} \right ) (\theta+dd^c V_{\theta})\\
 		&+ a^{-1/q}q^{-1} (V_{\theta}-u)^{(1-q)/q} (\theta+dd^c u). 
 	\end{flalign*}
 	We also have that $v \leq  \varphi$ with equality on the contact set $\mathcal{C}:= \{u= \psi+ V_{\theta}\}$. By \cite[Lemma 4.5]{DDL5} we have 
 	\begin{equation}
 		\label{eq: DGL contact 1}
 		{\bf 1}_{\mathcal{C}}(\theta+dd^c v)^n\leq {\bf 1}_{\mathcal{C}}(\theta+dd^c \f)^n.
 	\end{equation}
 	 Also, since $\psi+ V_{\theta}$ is quasi continuous, by Lemma \ref{lem: MA contact} the non-pluripolar Monge-Amp\`ere measure $(\theta+dd^c u)^n$ is supported on $\mathcal{C}$.
 We set 
\[
G:=\Omega \cap \left\{|u-V_{\theta}| > a^{-\frac{n}{p}} q^{-\frac{n+p}{p}}\right\}.
\] 
Observe that in the open set $G$, we have $\left (1- a^{-1/q}q^{-1} (V_{\theta}-u)^{(1-q)/q} \right )\geq 0$, hence
\begin{eqnarray}
\nonumber	(\theta+dd^c v) &\geq & a^{-1/q}q^{-1}(V_{\theta}-u)^{(1-q)/q} (\theta+dd^c u)\qquad {\rm on }\; G\\
	\label{eq: DGL contact 2}	 &= & a^{-1}q^{-1}(V_{\theta}-\varphi)^{(1-q)} (\theta+dd^c u)\qquad \quad\;\, {\rm on }\; G \cap \mathcal{C},
\end{eqnarray}
where the last equality follows from the fact that $V_{\theta}-u= a(V_{\theta}-\varphi)^q$ on $\mathcal{C}$. Combining this with \eqref{eq: DGL contact 1} and \eqref{eq: DGL contact 2} we obtain
\begin{flalign*}
{\bf 1}_{G\cap \mathcal{C}} \;a^{-n}q^{-n}(V_{\theta}-\varphi)^{n(1-q)} (\theta+dd^c u)^n
	&\leq {\bf 1}_{G\cap \mathcal{C}}(\theta+dd^c v)^n \leq {\bf 1}_{G\cap \mathcal{C}}(\theta+dd^c \varphi)^n,
\end{flalign*}
which is equivalent to 
\begin{equation}
	\label{eq: DGL contact 3}
		{\bf 1}_{G}  (\theta+dd^c u)^n= {\bf 1}_{G\cap \mathcal{C}}  (\theta+dd^c u)^n\leq {\bf 1}_{G\cap \mathcal{C}}\; a^nq^n(V_{\theta}-\varphi)^{p}(\theta+dd^c \f)^n.
\end{equation}
We now choose $a\in(0,1)$ so that 
\[
2a^n q^n \int_X |\varphi-V_{\theta}|^p (\theta+dd^c \varphi)^n=2a^n q^n E_p(\varphi) = 1.
\]
Integrating over $X$ both sides of \eqref{eq: DGL contact 3}, we obtain
\[
\int_G (\theta+dd^c u)^n \leq \frac{1}{2}.
\]
It thus follows that $G\neq X$, or equivalently that $G^c\neq \emptyset$. In particular $\sup_X u =\sup_X (u-V_\theta) \geq b:=-a^{-\frac{n}{p}} q^{-\frac{n+p}{p}}$. Let $c_0$, $C_0>0$ be uniform constants such that, for all $\phi \in \PSH(X,\theta)$ with $\sup_X \phi=0$ we have 
\[
\int_X e^{c_0|\phi|} \omega^n \leq C_0.
\]
We now argue exactly the same as in \cite[Theorem 2.1]{DGL20} to obtain the result with 
$
c= 2^{-1-1/n}q^{-1}c_0,\, {\rm{and}} \, C= C_0(e^{c_0}+1). 
$
\end{proof}

\begin{theorem}
	\label{thm: Entropy implies Ep and integrability} 
	Fix $B>0$ and set $p=\frac{n}{n-1}$. 
	There exist $c,C>0$ depending on $B,X,\theta,\omega,n$ such that for all $\varphi\in \Ent_B(X,\theta)$ we have 
	\[
	\int_X e^{c (-\varphi+V_{\theta})^p} \omega^n \leq C\, \quad \text{and}\,\quad E_p(\varphi) \leq C. 
	\]
	In particular $\Ent(X,\theta) \subset \mathcal{E}^{\frac{n}{n-1}}(X,\theta)$. 
 \end{theorem}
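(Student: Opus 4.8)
\emph{Overall strategy.} The plan is to follow the scheme of the K\"ahler case \cite{DGL20}: first establish both estimates for $\varphi$ with minimal singularities (with constants depending only on $B,X,\theta,\omega,n$), then reach the general case by approximation. Throughout I would use the normalization $\sup_X\varphi=-1$ of Theorem \ref{thm: Moser-Trudinger inequality}; then $\varphi+1\le 0$ is admissible in the envelope defining $V_\theta$, so $\varphi\le V_\theta-1$ and in particular $E_p(\varphi)\ge\vol(\theta)>0$. It is also convenient to assume $\vol(\theta)=1$. The point of the exponent $p=\frac{n}{n-1}$ is that it satisfies $1+\frac pn=p$, so Theorem \ref{thm: Moser-Trudinger inequality} specializes to
\[
\int_X\exp\!\big(c_0\,E_p(\varphi)^{-1/n}\,|\varphi-V_\theta|^{p}\big)\,\omega^n\le C_0,
\]
with $c_0,C_0>0$ depending only on $X,\theta,\omega,n$.

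\emph{Step 1: minimal singularities.} Assume $\varphi-V_\theta$ is bounded, so $0<E_p(\varphi)<+\infty$, and write $\theta_\varphi^n=f\,\omega^n$. Young's inequality $st\le s\log s-s+e^{t}$ with $s=f$ and $t=a|\varphi-V_\theta|^{p}$, integrated against $\omega^n$, gives for every $a>0$
\[
a\,E_p(\varphi)\le\Ent(\omega^n,\theta_\varphi^n)-\vol(\theta)+\int_X e^{a|\varphi-V_\theta|^{p}}\omega^n\le B+\int_X e^{a|\varphi-V_\theta|^{p}}\omega^n.
\]
Now I would choose $a=c_0\,E_p(\varphi)^{-1/n}$ and bound the last integral by $C_0$ via the Moser--Trudinger inequality above; this yields $c_0\,E_p(\varphi)^{1-1/n}\le B+C_0$, hence
\[
E_p(\varphi)\le C_1:=\Big(\tfrac{B+C_0}{c_0}\Big)^{\!\frac{n}{n-1}}.
\]
Inserting $E_p(\varphi)\le C_1$ back into the Moser--Trudinger inequality (and using that $x\mapsto x^{-1/n}$ is decreasing) gives $\int_X e^{c|\varphi-V_\theta|^{p}}\omega^n\le C_0$ with $c:=c_0\,C_1^{-1/n}$. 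This settles both assertions, with $C:=\max(C_0,C_1)$, for $\varphi$ with minimal singularities.

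\emph{Step 2: approximation.} For general $\varphi\in\Ent_B(X,\theta)$ I would set $\mu:=\theta_\varphi^n=f\omega^n$ and consider $\mu_j:=c_j\min(f,j)\,\omega^n$, where $c_j\to1^+$ is chosen so that $\mu_j(X)=\vol(\theta)$. Then $\mu_j\rightharpoonup\mu$, each $\mu_j$ has bounded density, and, using the monotonicity of $x\mapsto x\log x$ on $[1,\infty)$, one checks $\Ent(\omega^n,\mu_j)\le B+1$ for $j$ large. By solvability of the complex Monge--Amp\`ere equation for measures with bounded density in big classes (\cite{BEGZ10,GLZ19}) there are $\varphi_j\in\PSH(X,\theta)$ with minimal singularities, $\sup_X\varphi_j=-1$ and $\theta_{\varphi_j}^n=\mu_j$, to which Step 1 applies with uniform constants: $E_p(\varphi_j)\le C_1'$ and $\int_X e^{c'|\varphi_j-V_\theta|^{p}}\omega^n\le C_0$, with $C_1',c'$ depending only on $B,X,\theta,\omega,n$. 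By the uniform entropy bound and Lemma \ref{lem: entropy vol} the $\mu_j$ are uniformly dominated by $\capa_\omega$, so standard stability of complex Monge--Amp\`ere equations gives $\varphi_j\to\varphi$ in capacity and (along a subsequence) $\omega^n$-a.e., whence $\int_X e^{c'|\varphi-V_\theta|^{p}}\omega^n\le C_0$ by Fatou. For the energy estimate, Lemma \ref{lem: conv measure} applied to the bounded quasi-continuous functions $\min(|\varphi_j-V_\theta|^{p},M)$ gives $\int_X\min(|\varphi-V_\theta|^{p},M)\,\theta_\varphi^n\le C_1'$ for each $M>0$, and letting $M\to+\infty$ yields $E_p(\varphi)\le C_1'$. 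Finally, any $\varphi\in\Ent(X,\theta)$ belongs to $\Ent_B(X,\theta)$ with $B=\Ent(\omega^n,\theta_\varphi^n)+1$, so $\varphi\in\cE^{n/(n-1)}(X,\theta)$.

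\emph{Main obstacle.} Step 1 is an elementary interpolation between the Moser--Trudinger inequality and the entropy. The real difficulty is Step 2: producing minimal-singularities approximants of $\varphi$ whose Monge--Amp\`ere measures retain a uniform entropy bound, and securing enough stability to pass the uniform estimates of Step 1 to the limit. This is where the capacity-domination lemmas of Section \ref{sec:prelim} and the solvability/stability theory for Monge--Amp\`ere equations in big classes are needed; the K\"ahler version of this argument is \cite{DGL20}.
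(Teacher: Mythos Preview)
Your approach is sound and follows the scheme of \cite{DGL20} that the paper invokes; Step~1 is exactly the intended interpolation between the Moser--Trudinger inequality and Young's inequality. Two points in Step~2 deserve tightening.

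First, watch for circularity. The paper's stability result (Theorem~\ref{thm: stability of solutions}) and Corollary~\ref{cor: BBEGZ entropy} both rely on the present theorem, so they cannot be what ``standard stability'' means here. What you actually need is elementary and independent of Theorem~\ref{thm: Entropy implies Ep and integrability}: $L^1$-compactness of sup-normalized $\theta$-psh functions gives a subsequential limit $\hat\varphi$; the semicontinuity argument of \cite[Lemma~2.8]{DDL4} yields $\theta_{\hat\varphi}^n\ge f\,\omega^n$, mass comparison forces equality, and uniqueness in $\cE(X,\theta)$ gives $\hat\varphi=\varphi$. This provides $\omega^n$-a.e.\ convergence along a subsequence, which is all Fatou requires for the exponential bound.

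Second, your route to $E_p(\varphi)\le C_1'$ via Lemma~\ref{lem: conv measure} requires convergence of $\varphi_j$ in capacity, which does not follow from the elementary stability just described (and invoking the sandwiching of Theorem~\ref{thm: stability of solutions} would be circular). This detour is unnecessary: once Fatou has produced $\int_X e^{c'|\varphi-V_\theta|^p}\omega^n\le C_0$, the \emph{same} Young inequality from Step~1, now applied directly to $\varphi$ with $s=f$ and $t=c'|\varphi-V_\theta|^p$, gives $c'\,E_p(\varphi)\le B+C_0$, so no limit in the energy is needed at all.
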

 Here 
\[
\Ent_B(X,\theta) := \{u \in \mathcal{E}^1(X,\theta) \; : \;  \sup_X u =-1,\quad \Ent(\omega^n,\theta_u^n) \leq B \}.
\]

 \begin{proof}
 	Using Theorem \ref{thm: Moser-Trudinger inequality}, the proof of Theorem \ref{thm: Entropy implies Ep and integrability}  is identical to that of \cite[Theorem 3.4]{DGL20}. 
 \end{proof}
 
 \begin{corollary}\label{cor: BBEGZ entropy}
 	Assume $(u_j)$ is a sequence in $\cE^1(X,\theta)$ such that $$\Ent(\omega^n,(\theta+dd^c u_j)^n)\leq C,$$ for some uniform constant $C$. If $u_j \to u\in \PSH(X,\theta)$ in $L^1$ then $u\in \cE^1(X,\theta)$ and $d_1(u_j,u)\to 0$. 
 \end{corollary}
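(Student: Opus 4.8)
The plan is: (i) reduce by a translation to a sequence with $\sup_X=-1$; (ii) upgrade the uniform entropy bound to a uniform $\cE^p$-bound with $p=\frac{n}{n-1}>1$ via Theorem~\ref{thm: Entropy implies Ep and integrability}; (iii) put the limit in $\cE^1$; and (iv) deduce $d_1(u_j,u)\to0$ from the formula $d_1(a,b)=E(a)+E(b)-2E(P(a,b))$ by showing that the Monge--Amp\`ere energies converge. I will freely use the following standard properties of $E=E(\theta;\cdot,V_{\theta})$ (see \cite{BEGZ10}): $E$ is non-decreasing and concave, it is continuous along decreasing sequences, $\cE^1(X,\theta)=\{u\in\PSH(X,\theta):E(u)>-\infty\}$, one has $E(u)-E(w)\ge\vol(\theta)^{-1}\int_X(u-w)\,\theta_u^n$ for $u,w\in\cE^1$, and $-E(u)\le\vol(\theta)^{-1}E_1(u)$ whenever $u\le V_{\theta}$.

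\emph{Reduction and consequences of the entropy bound.} Since $u_j\to u$ in $L^1$ we have $\sup_Xu_j\to\sup_Xu$, so $v_j:=u_j-1-\sup_Xu_j$ satisfies $\sup_Xv_j=-1$, $v_j\in\Ent_C(X,\theta)$, and $v_j\to v:=u-1-\sup_Xu$ in $L^1$; as adding a constant changes neither $\theta_u^n$, nor $E$ and $P(\cdot,\cdot)$ up to that constant, it suffices to show $v\in\cE^1(X,\theta)$ and $d_1(v_j,v)\to0$. Note $v_j\le V_{\theta}$ and $v\le V_{\theta}$. By Theorem~\ref{thm: Entropy implies Ep and integrability} with $p=\frac{n}{n-1}>1$ there is $C_1$, depending only on $C,X,\theta,\omega,n$, with $E_p(v_j)\le C_1$ and $\int_Xe^{c(V_{\theta}-v_j)^p}\omega^n\le C_1$; since $v=(\limsup_jv_j)^*\ge\limsup_jv_j$ gives $V_{\theta}-v\le\liminf_j(V_{\theta}-v_j)$, Fatou yields $\int_Xe^{c(V_{\theta}-v)^p}\omega^n\le C_1$ as well. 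Moreover finite entropy forces $\theta_{v_j}^n=g_j\,\omega^n$ with $\{g_j\}$ uniformly integrable (Lemma~\ref{lem: entropy vol}: $\theta_{v_j}^n\le F(\vol_{\omega})$, $F$ depending only on $C$) and uniformly bounded in $L\log L$; pairing the exponential integrability of $v$ against this $L\log L$-bound through Young's inequality for the conjugate pair $(e^t-t-1,\ (s+1)\log(s+1)-s)$ gives a uniform bound $\sup_j\int_X(V_{\theta}-v)^p\,\theta_{v_j}^n=:C''<+\infty$. Finally, set $\psi_j:=(\sup_{k\ge j}v_k)^*$, so $\psi_j\searrow v$, $\psi_j\le V_{\theta}$, $\psi_j\ge v_j$; then
\[
E(\psi_j)\ge E(v_j)\ge-\vol(\theta)^{-1}E_1(v_j)\ge-\vol(\theta)^{-1}E_p(v_j)^{1/p}\vol(\theta)^{1/p'}\ge-C_2,
\]
and since $E$ is continuous along decreasing sequences, $E(v)=\lim_jE(\psi_j)\ge-C_2>-\infty$; hence $v\in\cE^1(X,\theta)$, and also $\psi_j\in\cE^1(X,\theta)$.

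\emph{Convergence in $d_1$.} By the triangle inequality for $d_1$ and the defining formula, using $P(v_j,\psi_j)=v_j$ and $P(\psi_j,v)=v$ (both because $v\le\psi_j$ and $v_j\le\psi_j$),
\[
d_1(v_j,v)\le d_1(v_j,\psi_j)+d_1(\psi_j,v)=(E(\psi_j)-E(v_j))+(E(\psi_j)-E(v)),
\]
so, as $E(\psi_j)\to E(v)$, everything reduces to $E(v_j)\to E(v)$. From $v_j\le\psi_j$ we get $\limsup_jE(v_j)\le\lim_jE(\psi_j)=E(v)$; and the gradient inequality gives $E(v_j)\ge E(v)+\vol(\theta)^{-1}\int_X(v_j-v)\,\theta_{v_j}^n$, so it remains to prove $\int_X(v_j-v)\,\theta_{v_j}^n\to0$. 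Write it as $\int_X(v_j-v)^+\theta_{v_j}^n-\int_X(v-v_j)^+\theta_{v_j}^n$. For the positive part, $(v_j-v)^+\le\psi_j-v\le V_{\theta}-v$, and for $M>0$
\[
\int_X(v_j-v)^+\theta_{v_j}^n\le\int_X\min(\psi_j-v,M)\,\theta_{v_j}^n+M^{1-p}\int_X(V_{\theta}-v)^p\,\theta_{v_j}^n;
\]
the first term $\to0$ as $j\to\infty$ because $\min(\psi_j-v,M)\to0$ in $L^1(\omega^n)$ (it is $\le V_{\theta}-v$ and $\psi_j\searrow v$ a.e.) and $\{g_j\}$ is uniformly integrable, and the second is $\le M^{1-p}C''\to0$ as $M\to\infty$. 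For the negative part, $(v-v_j)^+\le V_{\theta}-v_j$ and likewise $\int_X(v-v_j)^+\theta_{v_j}^n\le\int_X\min((v-v_j)^+,M)\,\theta_{v_j}^n+M^{1-p}E_p(v_j)$, which tends to $0$ by the same reasoning (now using $E_p(v_j)\le C_1$). Thus $\int_X(v_j-v)\,\theta_{v_j}^n\to0$, hence $E(v_j)\to E(v)$ and $d_1(v_j,v)\to0$; translating back gives $u\in\cE^1(X,\theta)$ and $d_1(u_j,u)\to0$.

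\emph{Main obstacle.} The crux is the convergence $\int_X(v_j-v)\,\theta_{v_j}^n\to0$, i.e.\ ruling out concentration or loss of Monge--Amp\`ere mass in the limit: the ``tail'' part of the integral is controlled \emph{uniformly} in $j$ precisely because $p>1$ strictly in $\cE^p$ (this is exactly why the argument must pass through Theorem~\ref{thm: Entropy implies Ep and integrability} rather than a bare $\cE^1$-estimate), while the ``bounded'' part is handled by the uniform integrability of $\{\theta_{v_j}^n\}$ furnished by the entropy bound via Lemma~\ref{lem: entropy vol}.
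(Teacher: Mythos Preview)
Your proof is correct and takes a genuinely different route from the paper's. The paper shows that $(u_j)$ is $d_1$-Cauchy: from Theorem~\ref{thm: Entropy implies Ep and integrability} it obtains the uniform bound $\int_X |u_j-u_k|^{p}(f_j+f_k)\,\omega^n\le C$, then combines Egorov's theorem on a set of small volume with H\"older's inequality and Lemma~\ref{lem: entropy vol} on the complement to get $\int_X |u_j-u_k|(f_j+f_k)\,\omega^n\to 0$; it then invokes the Cauchy criterion \cite[Theorem~3.7]{DDL3} and the completeness of $(\cE^1(X,\theta),d_1)$ \cite[Theorem~3.10]{DDL3}. You instead argue directly: you first place the limit $v$ in $\cE^1$ via the decreasing bracket $\psi_j=(\sup_{k\ge j}v_k)^*\searrow v$ together with $E(\psi_j)\ge E(v_j)\ge -C_2$, and then prove $E(v_j)\to E(v)$ using the concavity inequality $E(v_j)-E(v)\ge \vol(\theta)^{-1}\int_X(v_j-v)\,\theta_{v_j}^n$; the integral is driven to zero by splitting into a bounded part (handled by Egorov and the uniform integrability of $g_j$ from Lemma~\ref{lem: entropy vol}) and a tail controlled by $M^{1-p}$ times a quantity bounded via Young's inequality against the exponential integrability from Theorem~\ref{thm: Entropy implies Ep and integrability}. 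Both proofs ultimately rest on the same two ingredients (the $\cE^p$-bound with $p>1$ and the uniform absolute continuity of $\theta_{v_j}^n$), but your approach is more self-contained in that it avoids the completeness/Cauchy black boxes from \cite{DDL3}, at the price of a slightly longer analysis; the paper's approach is shorter because it outsources the endgame to those references. One small point worth tightening in your write-up: the claim $v=(\limsup_j v_j)^*\ge\limsup_j v_j$ is justified because $\psi_j\searrow v$ everywhere (two qpsh functions agreeing a.e.\ agree everywhere), and for the negative part you should make explicit the subsequence argument (any subsequence has a further subsequence converging a.e., along which the Egorov step applies).
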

 \begin{proof}
 We write $(\theta+dd^c u_j)^n = f_j \omega^n$. By assumption we have $ \int_X f_j \log f_j \omega^n\leq C$. Theorem \ref{thm: Entropy implies Ep and integrability} then implies that for any $j,k$

 \begin{equation}\label{eq00ele}
 \int_X |u_j-u_k|^{\frac{n}{n-1}}(f_j+f_k)\, \omega^n \leq C.
 \end{equation}
 Fixing $\varepsilon>0$, by  Egorov theorem there exists an open set $U$ such that $\int_U \omega^n <\varepsilon$ and $u_j$ converge uniformly to $u$ on $X\setminus U$. We then have 
 \[
 \lim_{j,k\to +\infty}\int_{X\setminus U} |u_j-u_k| (f_j+f_k) \omega^n \to 0.
 \]
 On the other hand, by H\"older inequality
 \[
 \int_U |u_j-u_k| (f_j+f_k) \omega^n \leq \left (\int_U |u_j-u_k|^{\frac{n}{n-1}}(f_j+f_k)\omega^n\right)^{\frac{n-1}{n}} \left( \int_U (f_j+f_k) \omega^n \right)^{\frac{1}{n}}. 
 \]
 Now, the first term on the right-hand side is uniformly bounded thanks to \eqref{eq00ele}, while the second term is dominated by $(2F(\varepsilon))^{1/n}\leq 2F(\varepsilon)$, where $F$ is the function in Lemma \ref{lem: entropy vol}. It thus follows that 
 	\[
 	\lim_{j,k\to +\infty} \int_X |u_j-u_k| (f_j+f_k)\, \omega^n =0. 
 	\]
By \cite[Theorem 3.7]{DDL3} the sequence $(u_j)$ is Cauchy in $(\cE^1(X,\theta), d_1)$. Thus, it converges in $d_1$ to some $v\in \cE^1(X,\theta)$ \cite[Theorem 3.10]{DDL3}. But $d_1$-convergence implies $L^1$-convergence (\cite[Theorem 3.11]{DDL3}). We therefore have $v=u\in  \cE^1(X,\theta)$ and $d_1(u_j,u)\to 0$. 
 \end{proof}

\subsection{Stability of solutions} 

Consider a sequence of solutions to Monge-Amp\`ere equations 
\[
(\theta_j +dd^c u_j)^n = f_j \omega^n,\; u_j \in \cE(X,\theta_j), \; \sup_X u_j =0,
\]
where $(f_j)$ is a sequence of densities converging in $L^1$ to some $f$, and $\theta_j$ is a sequence of big forms converging to a big form $\theta$. Naturally, we expect that $u_j\to u$ in $L^1$, where $u$ is the unique solution to 
\[
(\theta+dd^c u)^n = f\omega^n, \; u\in \cE(X,\theta),\; \sup_X u=0. 
\]  

\begin{theorem}\label{thm: stability of solutions}
	Assume $\theta_j$ decreases to $\theta$, and $\int_X f_j\log f_j \; \omega^n \leq C$ is uniformly bounded. 
	Then a subsequence of $(u_{j})$, still denoted by $(u_j)$, can be sandwiched between two monotone sequences 
	\[
	\cE^1(X,\theta)\ni \f_j \leq u_j \leq \psi_j \in \cE^1(X,\theta_j); \; \f_j \nearrow u, \; \text{ and}\;  \psi_j \searrow u.
	\]
	In particular $u_j \to u$ in capacity. 
\end{theorem}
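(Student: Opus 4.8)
The plan is to build the two monotone barriers directly from the sequence $(u_j)$, control their energies through the finite-entropy hypothesis, and then identify their (common) limit with $u$ by matching Monge--Amp\`ere masses and invoking uniqueness of finite-energy solutions. First I would make some reductions. Since $f_j\to f$ in $L^1(\omega^n)$, after passing to a subsequence with $\|f_{j+1}-f_j\|_{L^1}\le 2^{-j}$ we may assume that $h_j:=\sup_{k\ge j}f_k$ decreases to $f$, that $\ell_j:=\inf_{k\ge j}f_k$ increases to $f$, and that $h_j,\ell_j\in L^1(\omega^n)$. As $\theta_j\searrow\theta$ with all classes big and nef, continuity of the volume gives $\vol(\theta_j)=\int_X f_j\,\omega^n\to\vol(\theta)$, hence $\int_X f\,\omega^n=\vol(\theta)$. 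Finally, the uniform entropy bound together with Theorem \ref{thm: Entropy implies Ep and integrability} (whose constants are uniform in $j$ because $\theta\le\theta_j\le\theta_1$) gives $u_j\in\cE^{n/(n-1)}(X,\theta_j)$ with $E_1(u_j)\le C$ uniformly.

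For the upper barrier I would set $\psi_j:=\bigl(\sup_{k\ge j}u_k\bigr)^{*}$. Since $\theta_k\le\theta_j$ for $k\ge j$, every $u_k$ is $\theta_j$-psh, so $\psi_j\in\PSH(X,\theta_j)$, $\sup_X\psi_j=0$, and $(\psi_j)$ decreases; its limit $\psi$ lies in $\PSH(X,\theta)$. From $u_j\le\psi_j\le 0$ and the uniform energy bound, monotonicity of the energy gives $\psi_j\in\cE^1(X,\theta_j)$ uniformly. Reducing $\psi_j$ via Choquet's lemma to an increasing limit of finite maxima and iterating Lemma \ref{lem: max principle}, combined with the monotonicity $(\theta_j+dd^cu_k)^n\ge(\theta_k+dd^cu_k)^n=f_k\,\omega^n$ for $k\ge j$, yields $(\theta_j+dd^c\psi_j)^n\ge \ell_j\,\omega^n$. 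For the lower barrier I would set $\varphi_j:=P_{\theta}\bigl(\inf_{k\ge j}u_k\bigr)\in\PSH(X,\theta)$; it increases and $\varphi_j\le\inf_{k\ge j}u_k\le u_j$. Writing $\varphi_j$ as the decreasing limit of $P_\theta(\min(u_j,\dots,u_{j+N}))$ and using Lemma \ref{lem: MA contact} together with the maximum principle and $\theta\le\theta_k$, one gets that $(\theta+dd^c\varphi_j)^n$ is supported on $\bigcup_{k\ge j}\{\varphi_j=u_k\}$ and is dominated there by $(\theta_k+dd^cu_k)^n=f_k\,\omega^n\le h_j\,\omega^n$, so $(\theta+dd^c\varphi_j)^n\le h_j\,\omega^n$; combined with Lemma \ref{lem: envelope big} and the uniform integrability from Theorem \ref{thm: Entropy implies Ep and integrability}, this shows $\varphi_j\in\cE^1(X,\theta)$ uniformly. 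Let $\varphi:=\lim_j\varphi_j\le\psi$.

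Now for the identification and the conclusion. Both $(\psi_j)$ and $(\varphi_j)$ converge in $\cE^1$ (monotone sequences with uniformly bounded energy), so their Monge--Amp\`ere measures converge weakly (also using $\theta_j\searrow\theta$). Passing to the limit in $(\theta_j+dd^c\psi_j)^n\ge \ell_j\,\omega^n$ and in $(\theta+dd^c\varphi_j)^n\le h_j\,\omega^n$, and recalling $\ell_j\,\omega^n\nearrow f\,\omega^n$, $h_j\,\omega^n\searrow f\,\omega^n$ and $\int_X f\,\omega^n=\vol(\theta)$, the matching of total masses forces $\varphi,\psi\in\cE(X,\theta)$ with $(\theta+dd^c\varphi)^n=(\theta+dd^c\psi)^n=f\,\omega^n$. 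By uniqueness of finite-energy solutions (the comparison/domination principle of \cite{BEGZ10}) we get $\varphi=\psi=u$. Thus $\varphi_j\le u_j\le\psi_j$ with $\varphi_j\nearrow u$ and $\psi_j\searrow u$, so $\varphi_j,\psi_j\to u$ in capacity (a decreasing, resp.\ an increasing sequence of quasi-psh functions with the correct u.s.c.\ limit converges in capacity); since $\{|u_j-u|>2\varepsilon\}\subset\{\psi_j-u>\varepsilon\}\cup\{u-\varphi_j>\varepsilon\}$, also $u_j\to u$ in capacity.

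I expect the main obstacle to be the identification step carried out in the big-and-nef setting with a varying reference form: establishing the one-sided Monge--Amp\`ere inequalities for the two barriers, the weak continuity of the Monge--Amp\`ere operator along the monotone sequences when $\theta_j\searrow\theta$, the uniformity in $j$ of the Moser--Trudinger and energy estimates, the convergence $\vol(\theta_j)\to\vol(\theta)$, and the control of $V_{\theta_j}$ near $X\setminus\Amp(\theta_j)$ --- all of which must be in place before the uniqueness theorem can be applied.
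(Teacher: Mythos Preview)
Your upper barrier $\psi_j:=(\sup_{k\ge j}u_k)^*$ coincides with the paper's, and the overall strategy---sandwich $u_j$ between two monotone sequences and identify the common limit with $u$---is the same. The genuine gap is in the lower barrier.

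You set $\varphi_j:=P_\theta(\inf_{k\ge j}u_k)$ and claim $\varphi_j\in\cE^1(X,\theta)$ with uniform bounds. The paper explicitly flags the danger here: this envelope may be identically $-\infty$. Your justification writes $\varphi_j$ as a decreasing limit of finite envelopes $P_\theta(\min(u_j,\dots,u_{j+N}))$, bounds their Monge--Amp\`ere measures by $h_j\,\omega^n$, and then invokes Theorem~\ref{thm: Entropy implies Ep and integrability}. But that theorem needs an \emph{entropy} bound on the density, and your subsequence extraction only gives $h_j=\sup_{k\ge j}f_k\in L^1$; there is no reason for $\int_X h_j\log h_j\,\omega^n$ to be finite, so you get no uniform $E_1$-bound on the finite envelopes and hence no control of the limit as $N\to\infty$. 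The paper circumvents this by first setting $v_j:=P_\theta(u_j)$, so that $\theta_{v_j}^n\le f_j\,\omega^n$ with \emph{individually} controlled entropy, then using Corollary~\ref{cor: BBEGZ entropy} to obtain $d_1(v_j,v)\to 0$, and only \emph{after} this $d_1$-Cauchy step invoking the completeness of $(\cE^1(X,\theta),d_1)$ (the proof of \cite[Theorem 3.10]{DDL3}) to pass to a further subsequence for which $P_\theta(\inf_{k\ge j}v_k)\in\cE^1(X,\theta)$.

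A second, smaller gap: uniqueness in \cite{BEGZ10} is only up to an additive constant, so from $\theta_\varphi^n=\theta_\psi^n=f\,\omega^n$ you only get $\varphi=\psi+c$ with $c\le 0$. The normalization $\sup_X\psi=0$ pins down $\psi=u$, but nothing in your argument forces $\sup_X\varphi=0$. The paper handles this separately: it tests $\theta_{v_j}^n$ against $e^{v_j}$ and $e^{u_j}$ (which agree on the contact set where $\theta_{v_j}^n$ lives), passes to the limit via Lemma~\ref{lem: convergence bounded}, and deduces $\int_X e^v\theta_v^n=\int_X e^u\theta_v^n$, forcing the constant to vanish.
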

To be more precise, we assume in the above theorem that $\theta + \varepsilon_j \omega \geq \theta_j\geq \theta_{j+1}\geq \theta$, where  $(\varepsilon_j)$ is a sequence of positive real numbers decreasing to $0$. 
Note that by Theorem \ref{thm: Entropy implies Ep and integrability},  we have $u_j \in \cE^1(X,\theta_j)$ and $u\in \cE^1(X,\theta)$. 

\begin{proof}
After extracting a subsequence we can assume that $f_j\to f$ almost everywhere, that $u_j \to \hat{u}\in \PSH(X,\theta)$ in $L^1$ and almost everywhere. In particular $\sup_X \hat{u}=0$. By the proof of \cite[Lemma 2.8]{DDL4} we have 
\[
(\theta+dd^c \hat{u})^n\geq f\omega^n.
\] 
Comparing their total mass we see that the above inequality is an equality. By uniqueness \cite[Theorem A]{BEGZ10}, we thus have $u=\hat{u}$. 
 
For each $j$, we  consider 
 \[
 \psi_j:= \left(\sup_{k\geq j} u_k\right)^*. 
  \]
 Then $\psi_j\in \cE^1(X, \theta_j)$ and $\psi_j \searrow u$. 
To produce the lower sequence sandwiching $(u_j)$, it is natural to consider the envelope 
$$
P_{\theta}\left(\inf_{k\geq j} u_k\right).
$$ 
A priori such a function  could be identically $-\infty$, and this is the reason why we should first consider an appropriate subsequence of $(u_j)$.  
We define 
$$
v_j:= P_{\theta}(u_j),
$$
which, by Lemma \ref{lem: envelope big}, belongs to $\cE^1(X,\theta)$.  By Lemma \ref{lem: MA contact}, we have that $\theta_{v_j}^n$ is supported on the contact set $\mathcal{C}=\{v_j=u_j\}$. Lemma \ref{lem: max principle} thus yields  
	\begin{equation}\label{mass00}
	\theta_{v_j}^n = {\bf 1}_{\mathcal{C}} \theta_{v_j}^n  \leq  {\bf 1}_{\mathcal{C}} (\theta_j +dd^c v_j)^n=  {\bf 1}_{\mathcal{C}} (\theta_j +dd^c u_j)^n\leq  f_j\omega^n. 
	\end{equation}
	By the above and the H\"older-Young inequality we have 
	\[
	-\alpha  \vol(\theta) \sup_X v_j \leq \alpha\int_X (-v_j)(\theta+dd^c v_j)^n \leq \int_X e^{-\alpha v_j} \omega^n + \int_X f_j \log (f_j+1) \omega^n \leq C. 
	\]
	Here $\alpha>0$ is a uniform constant ensuring $\int_X e^{-\alpha \psi} \omega^n \leq C$ for all $\psi\in \PSH(X,\theta)$ with $\sup_X \psi=0$. The existence of $\alpha$ follows from the uniform Skoda integrability theorem, see \cite{GZbook}. 
We then get a uniform bound for $\sup_X v_j$. Extracting a subsequence we can assume that $v_j$ converges in $L^1$ and almost everywhere to some $v \in \PSH(X,\theta)$. 
	By \eqref{mass00}, we have $\Ent(\omega^n, \theta_{v_j}^n)\leq C$. Hence, by Corollary \ref{cor: BBEGZ entropy}, we have $d_1(v_j,v) \to 0$. Now, by the proof of \cite[Theorem 3.10]{DDL3}, after extracting a subsequence, the function
	\[
	\f_{j} := P_{\theta} \left(\inf_{k\geq j} v_k\right),	
	\]	
	belongs to $\in \cE^1(X,\theta)$, and $\f_j \nearrow v$. 
		
	To complete the proof, we finally show that $v =u$. Recall that by construction we have $v \leq u$. Since $v_j \geq \f_j$ and $\f_j \nearrow v$, we infer that $v_j \to v$ in capacity. Moreover, since 
	$$\int_X (\theta+dd^c v_j)^n = \int_X (\theta+dd^c v)^n=\vol(\theta),$$ by \cite{DDL2} we therefore have that 
	\[
	(\theta+dd^c v_j)^n \weak  (\theta+dd^c v)^n.
	\]
	 It thus follows from \eqref{mass00} that $\theta_v^n \leq f\omega^n$. Since the two measures have the same total mass, we infer that $\theta_v^n = f\omega^n$. 
	 By uniqueness of solutions \cite{BEGZ10}, we have $v = u+C_0$, for some constant $C_0\leq 0$ because $v\leq u$, and it remains to show that $C_0=0$. 
	 
	Since $(\theta+dd^c v_j)^n$ is supported on the contact set $\{v_j=u_j\}$ (Lemma \ref{lem: MA contact}), we have 	\begin{equation}\label{conv1}
	\int_X e^{v_j} (\theta+dd^c v_j)^n  = \int_X e^{u_j} (\theta+dd^c v_j)^n.  
	\end{equation}
	Since $0\leq e^{u_j}, e^{v_j}\leq 1$ are uniformly bounded, by Lemma \ref{lem: convergence bounded}  we have 
	\[
	\int_X e^{u_j} \theta_{v_j}^n \to \int_X e^{u} \theta_v^n\; \text{and}\; \int_X e^{v_j} \theta_{v_j}^n \to \int_X e^{v} \theta_v^n. 
	\]
	From this and \eqref{conv1} we infer 
	\[
	\int_X e^v \theta_v^n= \int_X e^u \theta_v^n
	\] 
	Since $v=u+C_0$, from the above  we obtain $C_0=0$.
		
	To conclude, we have constructed two sequences $\f_j \nearrow u$, $\psi_j \searrow u$ which satisfy 
	\begin{eqnarray*}
	\cE^1(X, \theta) \ni \f_j \leq u_j\leq  \psi_j\in \cE^1(X, \theta_j).
	\end{eqnarray*}
	In particular $u_j\to u$ in capacity. 
	\end{proof}

 \section{Geodesic distance in K\"ahler classes}

 In this section we prove Theorem \ref{thm: geodesic distance intro} of the Introduction.  \begin{lemma}
 	\label{lem: geodesic distance}
 	Fix $u_0,u_1\in  \Ent(X,\omega)$, and let $(u_t)_{t\in [0,1]}$ be the psh geodesic segment connecting $u_0$ and $u_1$. Assume also that $u_0-u_1$ is bounded on $X$. Then for all $t\in (0,1)$, $\dot{u}_t^-=\dot{u}_t^+$ almost everywhere on $X$ with respect to $(\omega+dd^c u_t)^n$. 
 \end{lemma}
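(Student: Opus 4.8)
The guiding idea is to reduce the statement to a convexity property of the extended Mabuchi K-energy along the psh geodesic $(u_t)$, exactly as in the Kähler-class argument that the paper attributes to \cite{BDL17}. First I would recall that, since $u_0,u_1 \in \Ent(X,\omega)$ and $u_0-u_1$ is bounded, the geodesic $(u_t)$ has the same singularities as $V_\omega$ (here $V_\omega=0$), hence is Lipschitz in $t$ by \eqref{eq: Lip}, and $t\mapsto u_t$ is a curve in $\cE^1(X,\omega)$ (in fact in $\cE^{n/(n-1)}$ by Theorem \ref{thm: Entropy implies Ep and integrability}). The convexity of $t\mapsto u_t(x)$ for every $x$ guarantees that the one-sided derivatives $\dot u_t^{\pm}(x)$ exist everywhere and satisfy $\dot u_t^- \le \dot u_t^+$ pointwise, with $\dot u_t^-$ left-continuous and $\dot u_t^+$ right-continuous in $t$, both bounded by $\sup_X|u_0-u_1|$. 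So the content is the reverse inequality $\dot u_t^- \ge \dot u_t^+$ holding $\omega_{u_t}^n$-a.e.

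\textbf{Key steps.} (1) Show that $\cM_\omega(u_t)$ is finite for every $t$ and that $t\mapsto \cM_\omega(u_t)$ is convex on $[0,1]$; for this I would invoke the convexity of the extended K-energy along psh geodesics in $\cE^1$ together with finiteness of entropy at the endpoints, and the fact that entropy stays finite (or at least the energy terms stay finite) along the geodesic — this is where Corollary \ref{cor: BBEGZ entropy} and the Moser--Trudinger bounds of Theorem \ref{thm: Moser-Trudinger inequality}/Theorem \ref{thm: Entropy implies Ep and integrability} enter, controlling the $E$ and $E_{\Ric}$ terms. (2) Decompose $\cM_\omega = \bar S_\omega E(\cdot) - n E_{\Ric(\omega)}(\cdot) + \Ent(\omega^n,\cdot)$ and analyze the one-sided $t$-derivatives of each piece along the geodesic. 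The key classical fact (essentially \cite[Lemma 4.11]{BDL17}, which the paper explicitly says it will use) is that $t\mapsto E(\omega;u_t,0)$ is \emph{affine} in $t$ along a psh geodesic, and more generally the derivative of $t\mapsto E(\omega;u_t,0)$ is $\int_X \dot u_t \,\omega_{u_t}^n$, with the left and right versions computed using $\dot u_t^{\mp}$ respectively. (3) The crucial monotonicity: using convexity of the whole K-energy plus the structure of these derivative formulas, one gets that the jump $\int_X(\dot u_t^+ - \dot u_t^-)\,\omega_{u_t}^n$ is controlled by — in fact forced to vanish by — the interplay of the affine/convex pieces; combined with $\dot u_t^+ \ge \dot u_t^-$ pointwise this forces $\dot u_t^+=\dot u_t^-$ $\omega_{u_t}^n$-a.e. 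Concretely I expect to show that $t\mapsto \int_X |\dot u_t|\,\omega_{u_t}^n$ (or the relevant energy quantity) is both left- and right-continuous, and that continuity of this integral against the varying measure $\omega_{u_t}^n$ is incompatible with a genuine a.e. jump of the integrand unless the jump is supported on an $\omega_{u_t}^n$-null set.

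\textbf{Main obstacle.} The delicate point is justifying that the measures $\omega_{u_t}^n$ vary continuously (weakly, and in a way compatible with capacity) in $t$ and that one may differentiate $E$, $E_{\Ric}$ and the entropy term term-by-term in $t$ with the correct one-sided formulas; the geodesic only has $C^{1,1}$-type regularity in the Kähler smooth setting and much less a priori here, so I would rely on the approximation of $(u_t)$ by geodesics between regularized endpoints, on the weak convergence lemmas (Lemma \ref{lem: convergence quasi open}, Lemma \ref{lem: convergence bounded}, Lemma \ref{lem: conv measure}) together with the uniform entropy/capacity domination from Lemma \ref{lem: entropy vol}, and on the lower semicontinuity of entropy to pass to the limit. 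The hardest estimate is controlling the entropy of $\omega_{u_t}^n$ uniformly in $t$ along the geodesic (or at least showing $\liminf$-type control suffices), since entropy is not obviously convex along geodesics — here I would use that the other two terms of $\cM_\omega$ are continuous (indeed affine/convex) in $t$ so that convexity and finiteness of $\cM_\omega(u_t)$ transfers to an a priori bound on $\Ent(\omega^n,\omega_{u_t}^n)$ on compact subintervals of $(0,1)$, which then feeds back into Lemma \ref{lem: entropy vol} to give the capacity domination needed to run the weak-convergence arguments. Once all pieces are in place, the equality $\dot u_t^-=\dot u_t^+$ $\omega_{u_t}^n$-a.e. on $(0,1)$ follows by combining the pointwise inequality with the vanishing of the integrated jump.
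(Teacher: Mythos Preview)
Your overall ingredients are correct --- K-energy convexity yields a uniform entropy bound along the geodesic, and affinity of the Monge--Amp\`ere energy $E$ is the relevant functional identity --- but step (3) misidentifies the mechanism that forces $\int_X(\dot u_t^+-\dot u_t^-)\,\omega_{u_t}^n$ to vanish. From affinity of $E$ along psh geodesics and its concavity along affine paths one only gets the sandwich
\[
\int_X \dot u_t^-\,\omega_{u_t}^n \;\le\; E(u_1)-E(u_0) \;\le\; \int_X \dot u_t^+\,\omega_{u_t}^n,
\]
and there is no way to close this gap by analysing the one-sided derivatives of the remaining pieces of $\cM_\omega$: neither $E_{\Ric(\omega)}$ nor the entropy term is affine, and the entropy is not even known to be differentiable in $t$. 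Your fallback suggestion, that continuity of $t\mapsto\int_X|\dot u_t|\,\omega_{u_t}^n$ is ``incompatible with a jump of the integrand'', is not a self-contained argument either: such continuity would only help if you already knew the two one-sided integrals agree on a dense set of $t$'s.

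The paper supplies precisely that missing link, and it is elementary: by Fubini applied to the convex family $t\mapsto u_t(x)$, there is a set $I\subset(0,1)$ of full Lebesgue measure on which $\dot u_t^+=\dot u_t^-$ holds \emph{Lebesgue}-a.e.\ on $X$. The role of the K-energy convexity (which you correctly flagged in your ``main obstacle'') is then \emph{only} to bound $\Ent(\omega^n,\omega_{u_t}^n)$ uniformly, so that $\omega_{u_t}^n\ll\omega^n$ for every $t$; this upgrades ``Lebesgue-a.e.'' to ``$\omega_{u_t}^n$-a.e.'' and turns the sandwich into an equality for every $t\in I$. The extension to an arbitrary $t\in(0,1)$ proceeds by picking $s\in I$ with $s>t$, using the pointwise monotonicity $\dot u_s^+\ge\dot u_t^+$ (from convexity in $t$), and letting $s\to t^+$ in $\int_X\dot u_t^+\,\omega_{u_s}^n$ via Lemma~\ref{lem: convergence bounded} and Lemma~\ref{lem: entropy vol} (which applies because of the uniform entropy bound); similarly with $s<t$ for $\dot u_t^-$. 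The K-energy is never differentiated; it enters solely through the absolute-continuity consequence of the entropy bound, and the entire argument runs on $E$ alone together with the Fubini set $I$.
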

 \begin{proof}
 Let $U$ be the set of all  $x\in X$ with $P_{\omega}(u_0,u_1)(x) >-\infty$. Note that, since $u_t\geq P_{\omega}(u_0,u_1)$, we have $u_t(x)\neq -\infty$ for any $x\in U$. Observe as well that, by Theorem \ref{thm: Entropy implies Ep and integrability}, $u_0, u_1 \in \cE^1(X, \omega)$, and $u_t \geq P_\omega(u_0, u_1) \in \cE^1(X, \omega)$ for any $t\in [0,1]$. This means that the energy $E(u_t)$ of $u_t$ is uniformly bounded, which also implies that $E_{\Ric(\omega)}(u_t)$ is uniformly bounded (see \cite[Lemma 2.7 and Proposition 2.8]{BBGZ13}). 
 
  Also, for $x\in U$, the left and right derivatives $\dot{u}_t^{\pm}(x)\in \bR$ are well-defined thanks to convexity of $t\mapsto u_t(x)$. Since for such $x$, $t\mapsto u_t(x)$ is Lipschitz in $[0,1]$, these two directional derivatives coincide for almost all $t\in (0,1)$. By Fubini's theorem we can find a subset $I\subset [0,1]$ of full Lebesgue measure such that 
 \[
 \dot{u}_t^{+}(x) = \dot{u}_t^{-}(x), \qquad \forall t\in I\; {\rm and \; almost \; every}\; x\in X .
 \]
 Since the Monge-Amp\`ere energy $E=E(\omega;\cdot,0)$ is affine along psh geodesics, we have that for $t\in (0,1)$ and $h\in \bR$ small enough, 
\[
E(u_1)-E(u_0)= \frac{E(u_{t+h})-E(u_t)}{h}. 
\]
Also, since $E$ is also concave along affine paths, for $h>0$ small enough, we  have
\[
\int_X \frac{u_{t}-u_{t-h}}{h} \omega_{u_t}^n \leq E(u_1)-E(u_0) \leq \int_X \frac{u_{t+h}-u_{t}}{h} \omega_{u_t}^n. 
\]
Letting $h\to 0^+$ we obtain 
\begin{equation}
	\label{eq: slope MA energy}
	\int_X \dot{u}_{t}^-\omega_{u_t}^n \leq E(u_1)-E(u_0) \leq \int_X  \dot{u}_t^+ \omega_{u_t}^n. 
\end{equation}
We note here that $X\setminus U$ is a pluripolar set which is negligible with respect to the non-pluripolar Monge-Amp\`ere measure $\omega_{u_t}^n$. 
The convexity of $\cM_{\omega}$ along $u_t$ \cite{BB17} gives 
$$ \cM_{\omega}(u_t) \leq (1-t)\cM_{\omega}(u_0) +t \cM_{\omega}(u_1).$$
By definition, $\cM_\omega$ is the sum of the entropy and an energy part and by assumption $\cM_{\omega}(u_0),  \cM_{\omega}(u_1)\leq C$ and we already observed that the energy part of $u_t$ is uniformly bounded. This implies that 
\begin{equation}\label{unif_entropy}
\Ent(\omega^n, \omega_{u_t}^n)\leq C
\end{equation}
for some uniform constant $C>0$. It thus follows that $(\omega+ dd^c u_t)^n$ is absolutely continuous with respect to the Lebesgue measure. Hence
\[
\dot{u}_t^+ = \dot{u}_t^-.
\]
for $t\in I$ and almost everywhere on $X$ with respect to $\omega_{u_t}^n$. This allows to conclude that in \eqref{eq: slope MA energy} all inequalities are in fact equalities: 
\begin{equation}
	\label{eq: slope MA energy 1}
	\int_X \dot{u}_{t}^-\omega_{u_t}^n =  E(u_1)-E(u_0) = \int_X  \dot{u}_t^+ \omega_{u_t}^n, \; t\in I.
\end{equation}
Now, fix $t \in (0,1)$ and $s \in I$ such that $s>t$. Since $t\mapsto u_t(x)$ is convex and \eqref{eq: slope MA energy 1} holds for $s$, we have that
\[
E(u_1)-E(u_0) = \int_X \dot{u}_{s}^{+} (\omega +dd^c u_{s})^n   \geq \int_X \dot{u}_{t}^+ (\omega +dd^c u_{s})^n. 
\]
Since $u_0-u_1$ is bounded, by \eqref{eq: Lip} we get that $\dot{u}_t$ is bounded as well. Lemma \ref{lem: convergence bounded} together with Lemma \ref{lem: entropy vol} (which can be applied thank to \eqref{unif_entropy}) give
\[
\lim_{s\to t}\int_X \dot{u}_{t}^+ (\omega +dd^c u_{s})^n   = \int_X \dot{u}_{t}^+ (\omega +dd^c u_{t})^n. 
\]
Combining the above we get
$$E(u_1)-E(u_0)  \geq  \int_X \dot{u}_{t}^+ (\omega +dd^c u_{t})^n.$$

\noindent For the reverse inequality we first use the concavity of $E$ to infer that for $h>0$ small we have
\[
E(u_1)-E(u_0) =\frac{E(u_{t+h})- E(u_t)}{h} \leq \int_X \frac{u_{t+h}-u_t}{h} (\omega+dd^c u_t)^n.  
\]
Letting $h\to 0^+$ we obtain
\[
E(u_1)-E(u_0) \leq \int_X \dot{u}_t^+ (\omega+dd^c u_t)^n, \quad \forall t\in (0,1).
\]
In a similar way we prove 
$$E(u_1)-E(u_0) = \int_X \dot{u}_t^- (\omega+dd^c u_t)^n, \quad \forall t\in (0,1). $$
We thus get the equality \eqref{eq: slope MA energy 1} for all $t\in (0,1)$. In particular we see that the left and right derivatives of $u_t$ are equal for all $t\in (0,1)$ and almost everywhere on $X$ with respect to $\omega_{u_t}^n$. 
 \end{proof}
 
 \begin{theorem}\label{thm: geodesic distance}
Fix $p\geq 1$ and $u_0,u_1\in \Ent(X,\omega)$. Assume $u_0-u_1$ is bounded. Then
\begin{equation}\label{eq: geodesic distance dp}
 \int_X |\dot{u}_t^+|^p \omega_{u_t}^n=\int_X |\dot{u}_t^-|^p \omega_{u_t}^n\;	\text{is constant in}\; t\in [0,1].
\end{equation}
If in addition $u_0,u_1\in \cE^p(X,\omega)$ then, for all $t\in [0,1]$,
\begin{equation}\label{eq: geodesic distance dp 1}
d_p^p(u_0,u_1) = \int_X |\dot{u}_t^+|^p \omega_{u_t}^n=\int_X |\dot{u}_t^-|^p \omega_{u_t}^n.
\end{equation}
\end{theorem}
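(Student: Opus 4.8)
The plan is to leverage Lemma~\ref{lem: geodesic distance} and reduce to the classical case of potentials with bounded Laplacian (Chen~\cite{Chen00}, Darvas~\cite{Dar15}) by an approximation argument. First I would collect what Lemma~\ref{lem: geodesic distance} and its proof supply: for $t\in(0,1)$ the one-sided derivatives $\dot u_t^{\pm}$ agree $\omega_{u_t}^n$-a.e., so write $\dot u_t$ for this common value; since $u_0-u_1$ is bounded, \eqref{eq: Lip} gives $\sup_X|\dot u_t^{\pm}|\le M:=\sup_X|u_0-u_1|$; and the proof of Lemma~\ref{lem: geodesic distance} produced the uniform bound \eqref{unif_entropy}, hence by Lemma~\ref{lem: entropy vol} the measures $\omega_{u_t}^n$, $t\in[0,1]$, are uniformly dominated by $F(\capa_\omega)$, so that Lemmas~\ref{lem: convergence bounded} and~\ref{lem: conv measure} are available all along the geodesic. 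Set $g(t):=\int_X|\dot u_t^+|^p\,\omega_{u_t}^n=\int_X|\dot u_t^-|^p\,\omega_{u_t}^n$.

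Next I would show $g$ is continuous on $[0,1]$. By \eqref{eq: Lip}, $u_s\to u_t$ uniformly on $X$ as $s\to t$, hence in capacity, and since all these measures have total mass $\vol(\omega)=1$, we get $\omega_{u_s}^n\weak\omega_{u_t}^n$ by~\cite{DDL2}. For $x$ outside a pluripolar (hence Lebesgue-null) set, $r\mapsto u_r(x)$ is real convex on $[0,1]$, so right/left continuity of its one-sided derivatives gives $\dot u_s^+(x)\to\dot u_t^+(x)$ as $s\downarrow t$ and $\dot u_s^+(x)\to\dot u_t^-(x)$ as $s\uparrow t$; since these are bounded by $M$, $|\dot u_s^+|^p$ converges in $L^1(\omega^n)$ to $|\dot u_t^+|^p$, respectively $|\dot u_t^-|^p$. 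Lemma~\ref{lem: convergence bounded} then yields $g(s)\to g(t)$, using the identity $\int_X|\dot u_t^-|^p\omega_{u_t}^n=\int_X|\dot u_t^+|^p\omega_{u_t}^n$ from Lemma~\ref{lem: geodesic distance} for the limit from below.

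The heart of the matter is constancy of $g$, which I would get by approximation. Choose $u_i^j\in\PSH(X,\omega)$ with bounded Laplacian, $u_i^j\searrow u_i$, and $\sup_X|u_0^j-u_1^j|\le M$ uniformly in $j$: taking smooth $g_i^j\searrow u_i$ and putting $u_1^j:=P_\omega(g_1^j)$, $u_0^j:=P_\omega\big(\max(\min(g_1^j+M,g_0^j),\,g_1^j-M)\big)$ (smoothing the $\max$/$\min$ slightly) works, because $u_1-M\le u_0\le u_1+M$ forces the obstacles to decrease to $u_0$, the sup-bound passes from obstacles to envelopes, and envelopes of $C^{1,1}$ obstacles have bounded Laplacian by~\cite{Ber19}. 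For such endpoints the weak (in fact $C^{1,\bar1}$) geodesic $u_t^j$ satisfies $\int_X|\dot u_t^j|^p\omega_{u_t^j}^n=d_p(u_0^j,u_1^j)^p$, independent of $t$ --- this is the classical computation $\tfrac{d}{dt}\int_X|\dot u_t^j|^p\omega_{u_t^j}^n=0$ via the weak geodesic equation $\ddot u_t^j\,\omega_{u_t^j}^n=|\nabla\dot u_t^j|^2_{\omega_{u_t^j}}\omega_{u_t^j}^n$ and integration by parts (see~\cite{Chen00},~\cite{Dar15}, and~\cite{DNL20}). Now let $j\to\infty$: $u_t^j\searrow u_t$, hence $\omega_{u_t^j}^n\weak\omega_{u_t}^n$; and since $u_t^j(x)$ is convex in $t$ and decreases to $u_t(x)$, the time-derivatives converge, $\dot u_t^j(x)\to\dot u_t(x)$, at every $t$ of differentiability of $r\mapsto u_r(x)$, hence --- by Fubini and the uniform bound $M$ --- $|\dot u_t^j|^p\to|\dot u_t^+|^p$ in $L^1(\omega^n)$ for a.e.\ $t$. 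Passing to the limit in $\int_X|\dot u_t^j|^p\omega_{u_t^j}^n$ then gives $g(t)=\lim_j d_p(u_0^j,u_1^j)^p$ for a.e.\ $t\in(0,1)$; the right-hand side does not depend on $t$, so $g$ is a.e.\ constant on $(0,1)$, hence constant on $[0,1]$ by the continuity above. This settles the first assertion; if moreover $u_0,u_1\in\cE^p(X,\omega)$, then $u_i^j\searrow u_i$ in $\cE^p$ gives $d_p(u_0^j,u_1^j)\to d_p(u_0,u_1)$ (continuity of $d_p$ along decreasing sequences in $\cE^p$,~\cite{Dar15}), whence $d_p(u_0,u_1)^p=g(t)$ for all $t\in[0,1]$.

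I expect the decisive obstacle to be the last passage to the limit $\int_X|\dot u_t^j|^p\omega_{u_t^j}^n\to\int_X|\dot u_t^+|^p\omega_{u_t}^n$: the approximating measures $\omega_{u_t^j}^n$ carry no uniform entropy bound (bounded-Laplacian potentials are not uniformly absolutely continuous), so Lemmas~\ref{lem: convergence bounded}--\ref{lem: conv measure} do not apply verbatim. It is precisely here that the finite entropy of the limit $u_t$ and Lemma~\ref{lem: geodesic distance} must enter --- for instance to upgrade the $L^1$-convergence of the velocities to convergence in capacity, exploiting that each $\dot u_t^j$ is continuous and $\omega_{u_t^j}$-harmonic together with the bound \eqref{unif_entropy} --- so as to legitimately take the limit. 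The other routine points (monotone stability of geodesics, weak convergence of Monge--Amp\`ere measures along decreasing full-mass sequences, the two-sided continuity of $g$ at the endpoints) I would treat as standard.
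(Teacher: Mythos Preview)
Your strategy mirrors the paper's: approximate the endpoints by better potentials, use the known constancy $\int_X|\dot u_t^j|^p\omega_{u_t^j}^n=d_p(u_0^j,u_1^j)^p$ for the approximants, and pass to the limit. The gap you yourself flag is exactly the one the paper has to overcome, and your proposed way around it (``upgrade the $L^1$-convergence of the velocities to convergence in capacity'') is not how it is done and I do not see how to make it work: the velocities $\dot u_t^j$ carry no a priori reason to converge in capacity, and harmonicity of $\dot u_t^j$ with respect to $\omega_{u_t^j}$ gives no uniform modulus.

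The paper's fix is at the level of the \emph{choice of approximation}. Instead of bounded-Laplacian envelopes (whose densities are bounded but with constants blowing up in $j$), one takes smooth $u_{t,j}\searrow u_t$ for $t\in\{0,1\}$ with a \emph{uniform} entropy bound $\Ent(\omega^n,\omega_{u_{t,j}}^n)\le C$; such sequences exist by the regularizing property of the Monge--Amp\`ere flow \cite{DL20GT}. Then convexity of the K-energy propagates this to a uniform entropy bound on $\omega_{u_{t,j}}^n$ for \emph{all} $t\in[0,1]$ and all $j$, so Lemma~\ref{lem: entropy vol} puts the approximating measures under $F(\vol_\omega)$ uniformly, and Lemmas~\ref{lem: convergence bounded}--\ref{lem: conv measure} become available for the approximants themselves. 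This is precisely the missing ingredient in your scheme.

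With that in hand, the paper does not use your ``a.e.\ $t$ plus continuity'' route. It replaces $\dot u_{t,j}$ by the symmetric difference quotient $c_{t,h,j}=(u_{t+h,j}-u_{t-h,j})/(2h)$, which is built from function values and therefore passes to the limit under Lemma~\ref{lem: conv measure}; the error $|\dot u_{t,j}-c_{t,h,j}|$ is controlled by the second difference $(u_{t+h,j}+u_{t-h,j}-2u_{t,j})/(2h)$, whose $L^p(\omega_{u_{t,j}}^n)$-limit as $j\to\infty$ and then $h\to0$ is $\int_X|\dot u_t^+-\dot u_t^-|^p\omega_{u_t}^n=0$ by Lemma~\ref{lem: geodesic distance}. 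This gives the convergence $\int_X|\dot u_{t,j}|^p\omega_{u_{t,j}}^n\to\int_X|\dot u_t|^p\omega_{u_t}^n$ for every $t\in(0,1)$ directly, and the endpoints $t\in\{0,1\}$ are handled by a separate but elementary estimate. Your continuity argument for $g$ is fine, and once the uniform entropy on the approximants is available your ``a.e.\ $t$'' shortcut would also suffice; but without that uniform bound the limit step does not go through.
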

The result was known when $u_0,u_1$ have bounded Laplacian (see \cite{Chen00},  \cite{Dar17AJM}).  It was pointed out by Darvas \cite{Dar17AJM} that the above result does not hold in general (even for bounded $u_0,u_1$). 

\begin{proof}
As we already noticed the convexity of $t\mapsto \cM_{\omega}(u_t)$ on $[0,1]$ implies that $\Ent(\omega^n, \omega_{u_t}^n)$ is bounded. In particular, for $t\in [0,1]$, we have $(\omega+dd^c u_t)^n=f_t \omega^n$ with $f_t\in L^1(X,\omega^n)$.  

Now, for $t\in \{0,1\}$, let $(u_{t,j})_j$ be sequences of smooth strictly $\omega$-psh functions such that  $u_{t,j} \searrow u_t$ as $j\to +\infty$, and 
\[
\Ent(\omega^n,(\omega+dd^c u_{t,j})^n)  \leq C\qquad t=0,1 ,
\]
for a constant $C>0$. The existence of these sequences were proved in  \cite{DL20GT} using the regularizing property of the Monge-Amp\`ere flow \cite{GZ17}, \cite{DnL17AIM}. As consequence of the construction, since $u_1-C\leq u_0\leq u_1+C$, the maximum principle ensures that $|u_{0,j}- u_{1,j}|\leq C$, for a uniform constant $C>0$. It follows from \cite{BBEGZ19} that $E(u_{t,j})$ is uniformly bounded. Using \cite[Lemma 1.7 and Proposition 1.8]{BBGZ13} and the bound $-C\omega \leq {\rm Ric}(\omega)\leq C\omega$, we then see that $E_{{\rm Ric(\omega)}}(u_{t,j})$ is also uniformly bounded.  It thus follows  that $\cM(u_{0,j})$ and $\cM(u_{1,j})$ are uniformly bounded in $j$. 

  For each $j$, let $(u_{t,j})_{t\in [0,1]}$ be the unique  geodesic joining $u_{0,j}$ to $u_{1,j}$. The convexity of the K-energy $\cM_{\omega}$ ensures that $\cM_{\omega}(u_{t,j})$ is uniformly bounded. It thus follows that the entropy is also uniformly bounded:
 $$\Ent(\omega^n, (\omega+ dd^c u_{t,j})^n) \leq C \qquad \forall t\in[0,1].$$  
 Since, for each $t\in[0,1]$ fixed, $u_{t,j} \searrow u_t$ it follows that the Monge-Amp\`ere measures converge weakly: $(\omega+dd^c u_{t,j})^n \rightharpoonup (\omega+dd^c u_t)^n$. The lower-semicontinuity of the entropy (with respect to the weak convergence) reads as
 $$\liminf_{j\rightarrow +\infty} \Ent(\omega^n,\omega_{u_{t,j}}^n) \geq \Ent(\omega^n,\omega_{u_t}^n ).$$
 Thus $\Ent(\omega^n,\omega_{u_t}^n)$ is also uniformly bounded, which implies in particular that $\omega_{u_t}^n$ has $L^1$ density. 
 
 By \cite{Dar17AJM}, 
 \[
d_p^p(u_{0,j},u_{1,j})= \int_X |\dot{u}_{t,j}|^p (\omega+ dd^c u_{t,j})^n 
 \]
 does not depend on $t\in [0,1]$, and $d_p^p(u_{0,j},u_{1,j}) \to d_p^p(u_0,u_1)$ if $u_0,u_1\in \cE^p(X,\omega)$. To prove \eqref{eq: geodesic distance dp} and \eqref{eq: geodesic distance dp 1} for $t\in (0,1)$ it then suffices to prove the following claim:
 \[
\int_X |\dot{u}_{t,j}|^p (\omega+ dd^c u_{t,j})^n \to \int_X |\dot{u}_t^{+}|^p \omega_{u_t}^n = \int_X |\dot{u}_t^{-}|^p \omega_{u_t}^n. 
\]
Our proof of the claim is similar to that of  \cite[Lemma 10.2]{Lem21}. 
Let $U$ be the set of all  $x\in X$ with $P_{\omega}(u_0,u_1)(x) >-\infty$. Fix $h>0$ and $x\in U$, by convexity of $t\mapsto u_{t,j}(x)$ we have  
\[
a_{t,h,j}(x):= \frac{u_{t,j}(x)-u_{t-h,j}(x)}{h} \leq \dot{u}_{t,j}(x) \leq \frac{u_{t+h,j}(x)-u_{t,j}(x)}{h} =: b_{t,h,j}(x).  
\]
Setting $c_{t,h,j} := \dfrac{a_{t,h,j}+b_{t,h,j}}{2}$ we then have 
\[
|\dot{u}_{t,j} - c_{t,h,j}| \leq \frac{|b_{t,h,j}-a_{t,h,j}|}{2}= \frac{| u_{t+h,j}-  u_{t-h,j}-2u_{t,j}|}{2h},
\]
giving 
 \[
\int_X |\dot{u}_{t,j}-c_{t,h,j}|^p (\omega+ dd^c u_{t,j})^n \leq \int_X \frac{|u_{t+h,j}+u_{t-h,j}-2u_{t,j}|^p}{(2h)^p} (\omega+dd^c u_{t,j})^n.
\]
Note that $f_j:=|u_{t+h,j}+u_{t-h,j}-2u_{t,j}|^p$ is quasi-continuous and uniformly bounded (since $u_{0,j}-u_{1,j}$ is uniformly bounded) and it does converge in capacity to $f:= |u_{t+h}+u_{t-h}-2u_{t}|^p$, that is quasi-continuous and bounded as well.\\
Using Lemma \ref{lem: conv measure} and Lemma \ref{lem: entropy vol} (which can be applied since $\Ent(\omega^n, \omega_{u_{t,j}}^n)$ and $\Ent(\omega^n,\omega_{u_{t}}^n)$ are uniformly bounded) we obtain 
 \[
\limsup_{j\to +\infty}\int_X |\dot{u}_{t,j}-c_{t,h,j}|^p (\omega+ dd^c u_{t,j})^n \leq \int_X \frac{|u_{t+h}+u_{t-h}-2u_t|^p}{(2h)^p} (\omega+dd^c u_t)^n.
\]
Letting $h\to 0^+$ we then have, for $t\in (0,1)$, 
\[
\lim_{h\to 0^+}\limsup_{j\to +\infty}\int_X |\dot{u}_{t,j}-c_{t,h,j}|^p (\omega+ dd^c u_{t,j})^n =\int_X |\dot{u}_t^+- \dot{u}_t^-|^p (\omega+dd^c u_t)^n =0,
\]
where the last equality is a consequence of Lemma \ref{lem: geodesic distance}.

Moreover, again by Lemma \ref{lem: conv measure} and Lemma \ref{lem: entropy vol}  we also have that for $t\in (0,1)$, 
\[
\lim_{j\to +\infty}\int_X |c_{t,j,h}|^p (\omega+dd^c u_{t,j})^n  = \int_X \frac{|u_{t+h}-u_{t-h}|^p}{(2h)^p} (\omega+dd^c u_{t})^n. 
\]

Letting $h\to 0^+$ we then have, for $t\in (0,1)$, 
\[
\lim_{h\to 0^+} \lim_{j\to +\infty}\int_X |c_{t,j,h}|^p (\omega+dd^c u_{t,j})^n  = \int_X \left| \frac{\dot{u}_{t}^+ +\dot{u}_{t}^-}{2}\right|^p (\omega+dd^c u_{t})^n. 
\]
Combining all these estimates we get

 \[
\int_X |\dot{u}_{t,j}|^p (\omega+ dd^c u_{t,j})^n \to \int_X 
\left| \frac{\dot{u}_{t}^+ +\dot{u}_{t}^-}{2}\right|^p \omega_{u_t}^n =\int_X 
\left|\dot{u}_{t}^+ \right|^p \omega_{u_t}^n = \int_X 
\left|\dot{u}_{t}^- \right|^p \omega_{u_t}^n,
\]
where the last equalities follows again from Lemma \ref{lem: geodesic distance}.
Since for each $j$ the left-hand side above is independent of $t$, it follows from the above that the right-hand side is also independent of $t$. We then conclude that \eqref{eq: geodesic distance dp} holds for all $t\in (0,1)$.

To prove \eqref{eq: geodesic distance dp} and \eqref{eq: geodesic distance dp 1} for $t=0$ we  proceed as follows. Observe that by \eqref{eq: Lip} $\dot{u}_t$ is bounded.  Fixing $h>0$ small enough, we have
\[
\int_X ||\dot{u}_t^{+}|^p -|\dot{u}_0^{+}|^p| f_t \omega^n  \leq C_p \int_X |\dot{u}_t^{+}-\dot{u}_0^{+}| f_t \omega^n \leq C_p \int_X \left(\frac{u_{t+h}-u_t}{h} -\dot{u}_0^{+} \right) f_t \omega^n, 
\]
where we used that for $a,b\in \mathbb{R}$, there exists a constant $C_p>0$ such that $||a|^p- |b|^p|\leq C_p|a-b|$. Observe that last inequality makes sense since $\dot{u}_t^{\pm}$ are bounded.

Since the entropy of $f_t$ is uniformly bounded and the functions in the integral are uniformly bounded, by Lemma \ref{lem: entropy vol} and Lemma \ref{lem: convergence bounded} we have 
\[
\limsup_{t\to 0^+}\int_X ||\dot{u}_t^{+}|^p -|\dot{u}_0^{+}|^p| f_t \omega^n \leq C_p \int_X \left(\frac{u_{h}-u_0}{h} -\dot{u}_0^+ \right) f_0 \omega^n. 
\]
Now, letting $h\to 0^+$ we obtain 
\[
\lim_{t\to 0^+}\int_X ||\dot{u}_t^{+}|^p -|\dot{u}_0^{+}|^p| f_t \omega^n =0. 
\]
By Lemma \ref{lem: entropy vol} and Lemma \ref{lem: convergence bounded} we also have 
\[
\lim_{t\to 0^+}\int_X |\dot{u}_0^{+}|^p (f_t-f_0) \omega^n =0. 
\]
It thus follows that 
\[
\lim_{t\to 0^+}\int_X |\dot{u}_t^{+}|^p(\omega+dd^c u_t)^n = \int_X |\dot{u}_0^{+}|^p (\omega+dd^c u_0)^n. 
\]
This proves \eqref{eq: geodesic distance dp} and \eqref{eq: geodesic distance dp 1} for $t=0$  and the same argument proves \eqref{eq: geodesic distance dp} and \eqref{eq: geodesic distance dp 1} for $t=1$, finishing the proof. 

\end{proof}

We thank L\'aszl\'o Lempert for his question which suggests the following consequence of  Theorem \ref{thm: geodesic distance}.  
\begin{corollary}\label{coro: dp distance entropy at t}
Fix $p\geq 1$ and $[0,1]\ni t\mapsto u_t\in \cE^p(X,\omega)$ a plurisubharmonic geodesic. Assume $u_0-u_1$ is bounded and $u_s\in \Ent(X,\omega)$ for some $s\in [0,1]$. Then \eqref{eq: geodesic distance dp 1} holds for that $s$. 
\end{corollary}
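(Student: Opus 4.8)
The plan is to reduce to Theorem \ref{thm: geodesic distance} by showing that the endpoint data can be taken to be finite-entropy potentials, at the cost of passing from $[0,1]$ to a subinterval, and then exploiting the fact that a psh geodesic restricted to a subinterval is again a psh geodesic (up to an affine reparametrization). Concretely, assume first $s\in(0,1)$. Since $t\mapsto u_t$ is a psh geodesic and $u_0-u_1$ is bounded, the Lipschitz estimate \eqref{eq: Lip} gives that $u_a-u_b$ is bounded for all $a,b\in[0,1]$; moreover for any $0<a<b<1$ with $s\in[a,b]$, the segment $[a,b]\ni t\mapsto u_t$ is, after reparametrizing $t=(1-\lambda)a+\lambda b$, the psh geodesic connecting $u_a$ and $u_b$ (this is the standard restriction property of subgeodesics, which follows from the defining supremum and the convexity of subgeodesics already recalled in the Preliminaries). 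The idea is then to pick $a<s<b$ so that $u_a,u_b\in\Ent(X,\omega)$: since $u_s\in\Ent(X,\omega)$, convexity of $t\mapsto\mathcal{M}_\omega(u_t)$ (available by \cite{BB17}, or rather by its extension, once we know the endpoints are in $\cE^1$) is not what delivers this directly — instead I would use that near $s$ the potentials $u_t$ can be compared to $u_s$ plus a bounded term, but that only controls the energy part, not the entropy. So the real mechanism is different: I would take $a,b$ arbitrary with $s\in(a,b)\subset(0,1)$ and instead prove the entropy of $\omega_{u_a}^n$ and $\omega_{u_b}^n$ is finite as a \emph{consequence} of $u_s\in\Ent(X,\omega)$ together with convexity of the K-energy.

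Here is the key point. By Theorem \ref{thm: Entropy implies Ep and integrability}, $u_s\in\Ent(X,\omega)\subset\cE^{n/(n-1)}(X,\omega)\subset\cE^1(X,\omega)$, and by hypothesis $u_0,u_1\in\cE^p(X,\omega)\subset\cE^1(X,\omega)$, so the whole geodesic lies in $\cE^1(X,\omega)$ and the extended K-energy $\mathcal{M}_\omega$ is finite and convex along $t\mapsto u_t$ by \cite{BDL17}. Convexity gives $\mathcal{M}_\omega(u_s)\le(1-s)\mathcal{M}_\omega(u_0)+s\,\mathcal{M}_\omega(u_1)$; but this is an upper bound, whereas I want $\mathcal{M}_\omega(u_0),\mathcal{M}_\omega(u_1)$ \emph{finite}, which does not follow. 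The correct move is instead to work on $[a,b]$ with $a,b$ chosen so that $\mathcal{M}_\omega(u_a),\mathcal{M}_\omega(u_b)<+\infty$: since $t\mapsto\mathcal{M}_\omega(u_t)$ is convex on $[0,1]$ and finite at the interior point $s$, it is finite on all of $(0,1)$ (a finite convex function on an interval is finite on the interior once it is finite at one interior point — indeed convexity forces $\mathcal{M}_\omega(u_t)\le\max$ of nearby values, and lower semicontinuity of the entropy plus boundedness of the energy part bounds it below). Hence for any $a,b$ with $0<a<s<b<1$ we have $u_a,u_b\in\cE^1(X,\omega)$ with $\mathcal{M}_\omega(u_a),\mathcal{M}_\omega(u_b)<+\infty$; since the energy terms $E(u_t)$ and $E_{\mathrm{Ric}(\omega)}(u_t)$ are uniformly bounded on $[0,1]$ (as in the proof of Lemma \ref{lem: geodesic distance}), finiteness of $\mathcal{M}_\omega$ at $a$ and $b$ forces $\Ent(\omega^n,\omega_{u_a}^n)<+\infty$ and $\Ent(\omega^n,\omega_{u_b}^n)<+\infty$, i.e. $u_a,u_b\in\Ent(X,\omega)$.

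Now apply Theorem \ref{thm: geodesic distance} to the geodesic $[a,b]\ni t\mapsto u_t$ with endpoints $u_a,u_b\in\Ent(X,\omega)\cap\cE^p(X,\omega)$ (the $\cE^p$ membership of $u_a,u_b$ follows from $u_a-u_0$, $u_b-u_0$ bounded and $u_0\in\cE^p(X,\omega)$, using that a bounded perturbation stays in $\cE^p$, e.g. via Lemma \ref{MA_ep} applied to the comparison of Monge--Amp\`ere measures, or directly). After the affine reparametrization carrying $[a,b]$ to $[0,1]$, Theorem \ref{thm: geodesic distance} yields
\[
d_p^p(u_a,u_b)=\int_X|\dot u_t|^p\,\omega_{u_t}^n,\qquad \forall t\in[a,b],
\]
in particular at $t=s$ (the time derivative in the reparametrized variable differs from $\dot u_t$ by the constant factor $(b-a)$, which I would simply carry through). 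To conclude \eqref{eq: geodesic distance dp 1} on $[0,1]$, I would let $a\to0^+$ and $b\to1^-$: by \eqref{eq: Lip} the slopes $\dot u_t$ are uniformly bounded, the measures $\omega_{u_t}^n$ have uniformly bounded entropy on $[a,b]$ for $a,b$ bounded away from $0,1$, hence are uniformly dominated by capacity (Lemma \ref{lem: entropy vol}); the endpoint argument in the proof of Theorem \ref{thm: geodesic distance} (the $t\to0^+$ analysis there) shows $\int_X|\dot u_t^{\pm}|^p\omega_{u_t}^n$ extends continuously to $t=0,1$, so that $\int_X|\dot u_s|^p\omega_{u_s}^n$ equals the common value, which is $d_p^p(u_0,u_1)$ since $d_p(u_a,u_b)\to d_p(u_0,u_1)$ as $a\to0,b\to1$ (continuity of $d_p$ on $\cE^p$ under the decreasing/increasing approximations, cf. \cite{DDL3}). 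The cases $s=0$ and $s=1$ are already covered directly by Theorem \ref{thm: geodesic distance} since then $u_0$ or $u_1$ is the finite-entropy endpoint and the other endpoint is in $\cE^p$; but in fact $s\in\{0,1\}$ plus $u_0,u_1\in\cE^p(X,\omega)$ with one of them finite entropy is \emph{not} enough for Theorem \ref{thm: geodesic distance} as stated (which needs \emph{both} endpoints finite entropy), so I would instead note that $s\in(0,1)$ is the substantive case and handle $s\in\{0,1\}$ by the same subinterval trick together with the already-established continuity at the endpoints. The main obstacle I anticipate is the first step: making rigorous the claim that finiteness of the convex function $t\mapsto\mathcal{M}_\omega(u_t)$ at one interior point propagates to all interior points, and doing so without circularity — this needs the uniform boundedness (both ways) of the non-entropy part of $\mathcal{M}_\omega$ along the geodesic, which is exactly where \cite[Lemma 2.7, Proposition 2.8]{BBGZ13} and the $L^1$-continuity of the energies along the geodesic enter.
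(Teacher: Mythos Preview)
Your proof has a genuine gap at the point you yourself flag as the main obstacle: the claim that a convex function $t\mapsto\cM_\omega(u_t)$ with values in $(-\infty,+\infty]$, bounded below and finite at one interior point $s$, must be finite on all of $(0,1)$. This is simply false. Take $g\equiv 0$ on $[0,1/2]$ and $g\equiv +\infty$ on $(1/2,1]$: $g$ is convex, lower semicontinuous, bounded below, and finite at the interior point $1/4$, yet infinite on $(1/2,1)$. The domain of finiteness of a convex $(-\infty,+\infty]$-valued function is an interval, but nothing forces that interval to be all of $(0,1)$; lower boundedness and lower semicontinuity do not help. In the present situation we only know $u_0,u_1\in\cE^p(X,\omega)$, and there is no reason $\Ent(\omega^n,\omega_{u_0}^n)$ or $\Ent(\omega^n,\omega_{u_1}^n)$ should be finite, so $\cM_\omega(u_t)$ could well be $+\infty$ on a whole subinterval of $(0,1)$ adjacent to an endpoint. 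Consequently you cannot produce $a<s<b$ with $u_a,u_b\in\Ent(X,\omega)$, and the reduction to Theorem \ref{thm: geodesic distance} on $[a,b]$ collapses.

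The paper proceeds in the opposite order to what you suggest: the endpoint case $s\in\{0,1\}$ is the substantive one, and $s\in(0,1)$ is deduced from it by splitting. For $s=0$ (say), one approximates only the \emph{non-entropy} endpoint: take smooth $v_{1,j}\searrow u_1$ and set $u_{1,j}:=P_\omega(u_0+C,v_{1,j})$, where $|u_0-u_1|\leq C$. Each $u_{1,j}$ has finite entropy (via Lemma \ref{lem: MA contact} and smoothness of $v_{1,j}$), so Theorem \ref{thm: geodesic distance} applies to the geodesic from $u_0$ to $u_{1,j}$ and gives $d_p^p(u_0,u_{1,j})=\int_X|\dot u_{0,j}|^p\omega_{u_0}^n$. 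Since $u_{t,j}\searrow u_t$ and $u_{0,j}=u_0$, convexity in $t$ yields $\dot u_{0,j}\to\dot u_0$ pointwise, and $d_p(u_0,u_{1,j})\to d_p(u_0,u_1)$. For $s\in(0,1)$ one then cuts the geodesic at $s$ into two pieces, applies the endpoint case to each (with $u_s$ as the finite-entropy endpoint), and combines via $d_p(u_0,u_s)=s\,d_p(u_0,u_1)$, $d_p(u_s,u_1)=(1-s)\,d_p(u_0,u_1)$ and the chain-rule factors $\dot v_1=s\dot u_s^-$, $\dot w_0=(1-s)\dot u_s^+$.
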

\begin{proof}
    {\bf Step 1:} we assume that $s=0$. By the same arguments we can also obtain the case $s=1$. 
    
    As in the proof of Theorem \ref{thm: geodesic distance}, let $v_{1,j}$ be a sequence of smooth $\omega$-psh functions decreasing to $u_1$, and set 
    \[
    u_{1,j} := P_{\omega}(u_0+C,v_{1,j}),
    \]
    where $C>0$ is such that $|u_0-u_1|\leq C$. Then $u_{1,j}\searrow P_\omega(u_0+C, u_1)=u_1$. Also, $\Ent(\omega^n, (\omega+dd^c v_{1,j})^n)<+\infty$ since $v_{1,j}$ is smooth. It then follows from Lemma \ref{lem: max principle} that $u_{1,j}\in \Ent(X,\omega)$ for all $j$ (even if the bound on the entropy of $(\omega+dd^c u_{1,j})^n$ is not uniform). Let $t\mapsto u_{t,j}$ be the psh geodesic connecting $u_0$ to $u_{1,j}$. Also, $u_0, u_1\in\mathcal{E}^p(X, \omega) $ by assumption and $u_{1,j}\in \mathcal{E}^p(X, \omega)$ as well since $u_{1,j}\geq u_1$. By Theorem \ref{thm: geodesic distance} we have 
    \[
    d_p^p(u_0,u_{1,j}) = \int_X |\dot{u}_{0,j}|^p (\omega+dd^c u_0)^n. 
    \]
    Since the sequence $u_{t,j}$ is decreasing in $j$ to $u_t$ and $u_{0,j} =u_0$ for all $j$, we have by convexity of $t \mapsto u_{t,j}(x)$ for fixed $x$ that 
    \[
    \dot{u}_0 \leq \dot{u}_{0,j} \leq \frac{u_{t,j}-u_0}{t}, \; t\in (0,1]. 
    \]
    Letting $j\to +\infty$ and then $t\to 0^+$ we obtain the convergence $\dot{u}_{0,j} \to \dot{u}_0$. Since $d_p(u_0,u_{1,j}) \to d_p(u_0,u_1)$ (see \cite{Dar15,Dar17AJM}) we obtain \eqref{eq: geodesic distance dp 1} for $s=0$. 
    
    \medskip
    
    {\bf Step 2:} Now for $s\in (0,1)$, we consider the geodesic $[0,1]\ni t \mapsto v_t$ connecting $u_0$ to $u_s$ and the geodesic $[0,1]\ni t \mapsto w_t$ connecting $u_s$ to $u_1$. By uniqueness of psh geodesics we have $v_t = u_{st}$ and $w_t= u_{(1-s)t + s}$.   By the first step we have 
    \[
    d_p^p(u_0,u_s)= \int_X |\dot{v}_1|^p (\omega+dd^c u_s)^n \quad {\rm and}\quad d_p^p(u_s,u_1)= \int_X |\dot{w}_0|^p (\omega+dd^c u_s)^n.
    \]
    Observe also that 
    \[
    \dot{v}_1= s\dot{u}_s^- \quad {\rm and}\quad   \dot{w}_0 =(1-s)\dot{u}_s^+,
    \]
    and that (since $u_t$ is a $d_p$-metric geodesic)
    \[
     d_p(u_0,u_s) = sd_p(u_0,u_1) \quad {\rm and}\quad  d_p(u_s,u_1) =(1-s) d_p(u_0,u_1).
    \]
    Combining all the above it follows that $$d_p(u_0,u_1)^p =  \int_X |\dot{u}_s^-|^p (\omega+dd^c u_s)^n = \int_X |\dot{u}_s^+|^p (\omega+dd^c u_s)^n.$$
    
\end{proof}

 \section{Big and nef classes}

 
 We fix a smooth closed $(1,1)$-form $\theta$ representing a big and nef cohomology class, and we denote by $\Omega$ the ample locus of $\{\theta\}$. Up to scaling we can assume that $\theta\leq \omega$.  The Darvas $d_p$ metrics have been extended to big and nef cohomology classes in \cite{DNL20} where it is proved that $(\cE^p(X,\theta), d_p)$ is a complete geodesic metric space and 
 \[
 d_p^p(u_0,u_1)= \int_X |\dot{u}_t|^p (\theta+dd^c u_t)^n, \quad  t\in \{0,1\}
 \]
 if $u_t= P_{\theta}(f_t)$, $t=0,1$, with $f_t$ smooth. The goal of this section is to prove that this formula holds for geodesic segments with less regular endpoints and for all $t\in[0,1]$. 
 
 We use the same ideas as in the K\"ahler case and for this reason we introduce a K-energy functional for a big and nef class and we need to prove that this is convex in $t$.  
 
 We start with a lemma:
 
 \begin{lemma}\label{dp_comp}
 Assume $u,v,w\in \cE^p(X, \theta)$ and  $u\leq v\leq w$. Then $d_p(u,v)\leq d_p(u,w) $. 
 \end{lemma}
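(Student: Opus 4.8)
The statement asserts a monotonicity property of the Darvas $d_p$ distance: squeezing the middle point $v$ between $u$ and $w$ forces $d_p(u,v) \le d_p(u,w)$. The natural strategy is to exploit the fact that $(\mathcal{E}^p(X,\theta), d_p)$ is a complete geodesic metric space (by \cite{DNL20}) together with the known formula for $d_p$ in terms of the geodesic. I would first reduce to the case where all three functions are endpoints of nicely-approximable geodesics, or better, reduce to the case $p=1$ via the comparison $d_p^p(a,b) \simeq \int_X |a-b|^p$-type estimates — but actually the cleanest route is to work directly with the psh geodesic $[0,1]\ni t\mapsto u_t$ connecting $u$ to $w$ and the geodesic $[0,1]\ni t\mapsto v_t$ connecting $u$ to $v$.

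First I would observe that since $u\le v\le w$ and geodesics are ordered (if the endpoints are ordered, the geodesics are ordered by the maximum principle / comparison of subgeodesics), one can compare the two geodesics: the subgeodesic from $u$ to $v$ is dominated by a suitable reparametrization of the subgeodesic from $u$ to $w$. More precisely, for $t\in[0,1]$ one has $u\le v_t\le w$, and $v_t$ lies "below" the geodesic $u_s$ at a comparable parameter. Then I would use the expression $d_p^p(u,v) = \int_X |\dot v_0^+|^p (\theta+dd^c u)^n$ (valid, after an approximation argument, when the relevant integrals are finite — here all three functions are in $\mathcal{E}^p$) and the analogous formula for $d_p^p(u,w)$, and compare the slopes $\dot v_0^+$ and $\dot u_0^+$ at the common endpoint $u$. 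The key pointwise inequality should be $0\le \dot v_0^+(x)\le \dot u_0^+(x)$ for a.e.\ $x$ with respect to $(\theta+dd^c u)^n$: indeed $v_t\ge u = u_0$ and $v_t\le w$, while convexity of $t\mapsto u_t(x)$ gives $\dot u_0^+(x) = \lim_{t\to 0^+}\frac{u_t(x)-u(x)}{t}$, and one checks $v_t(x)-u(x)\le$ (something controlled by) $u_t(x)-u(x)$ using that both geodesics start at $u$ and $v\le w$. Monotonicity of $s\mapsto s^p$ on $[0,\infty)$ then yields the claim.

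Alternatively — and this may be the more robust route given the tools in the excerpt — I would use the characterization of $d_1$ purely in terms of the Monge–Amp\`ere energy, $d_1(a,b) = E(a)+E(b)-2E(P(a,b))$, together with a Pythagorean/monotonicity formula for $d_p$ along ordered triples. For $u\le v$ one has $P(u,v) = u$, so $d_1(u,v) = E(v)-E(u)$, and likewise $d_1(u,w) = E(w)-E(u)$; since $E$ is monotone (nondecreasing in its argument for ordered potentials, as $E(w)-E(v) = \frac{1}{(n+1)\mathrm{vol}(\theta)}\sum_j \int_X (w-v)\theta_w^j\wedge\theta_v^{n-j}\ge 0$ when $w\ge v$), the case $p=1$ is immediate. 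For general $p\ge 1$ I would invoke the known comparison between $d_p$ and $d_1$ on ordered segments, or re-run the slope computation above; in the big and nef setting these ingredients are exactly those developed in \cite{DNL20, DDL3}.

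The main obstacle I anticipate is the justification of the slope formula $d_p^p(u,v) = \int_X |\dot v_0^+|^p \theta_u^n$ for \emph{general} $u,v\in\mathcal{E}^p(X,\theta)$ without any entropy or boundedness hypothesis — Theorem \ref{thm: geodesic distance} is only proved under a finite-entropy and bounded-difference assumption. So the honest plan is to avoid that formula and instead rely on the energy characterization of $d_1$ plus the structure of $d_p$-metric geodesics: since $t\mapsto u_t$ is a $d_p$-geodesic, $d_p(u,u_t) = t\,d_p(u,w)$, and one wants to realize $v$ as (or bound it by) $u_{t_0}$ for an appropriate $t_0$, or more precisely use that $v$ sits in the "order interval" $[u,w]$ and that $d_p$ restricted to such an interval behaves monotonically. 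Making this last comparison rigorous — that $d_p(u,\cdot)$ is monotone along the partial order on $[u,w]$ — is the real content, and I would extract it from the convexity/monotonicity properties of the Monge–Amp\`ere energy established in \cite{DDL1, DDL3} applied on the big and nef class, possibly after a regularization $u_0=P_\theta(f_0)$, $w = P_\theta(f_1)$ with $f_i$ smooth and a passage to the limit using the completeness of $(\mathcal{E}^p(X,\theta),d_p)$.
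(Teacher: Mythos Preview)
Your first approach --- compare the initial slopes of the two geodesics (from $u$ to $v$ and from $u$ to $w$) at the common endpoint $u$, using that the geodesics are ordered when their endpoints are --- is exactly the paper's argument, and your identification of the obstacle (the slope formula $d_p^p(u,\cdot)=\int_X|\dot\phi_0|^p\,\theta_u^n$ is not available for arbitrary $u\in\mathcal E^p$) is correct.

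Where you go astray is in the resolution. You propose to \emph{avoid} the slope formula and fall back on energy identities or a vague order-interval comparison; the paper instead observes that \cite[Lemma~3.13]{DNL20} already supplies the slope formula whenever the single common endpoint $u$ has the form $P_\theta(f)$ with $f$ smooth --- no hypothesis on the other endpoint is needed. So one regularizes only $u$: choose smooth $f_j\searrow u$, set $u_j:=P_\theta(f_j)\searrow u$, and preserve the ordering by taking $v_j:=\max(u_j,v)\searrow v$ and $w_j:=\max(u_j,w)\searrow w$. The slope comparison then gives $d_p(u_j,v_j)\le d_p(u_j,w_j)$ directly, and \cite[Proposition~3.12]{DNL20} passes this to the limit. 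Your suggestion of regularizing both $u$ and $w$ as $P_\theta(f_i)$ would in general destroy the inequality $v\le w$; your $d_1$ argument via the energy $E$ is correct but does not extend to $p>1$; and the final order-interval idea is too vague to substitute for the slope computation.
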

 \begin{proof}
 Assume first that $u=P_\theta(f)$ for some smooth function $f$. By \cite[Lemma 3.13]{DNL20} we know that
 $$ d_p^p(u, v) =\int_X |\dot{u}_0|^p \theta_u^n \quad {\rm and }\quad  d_p^p(u, w) =\int_X |\dot{w}_0|^p \theta_u^n, $$
 where $u_t$ and $w_t$ are the geodesics joining $u, v$ and $u,w$, respectively. Since $u_0=w_0\leq u_1\leq w_1$, we have $0\leq \dot{u}_0 \leq \dot{w}_0 $. This implies that $ d_p(u, v) \leq d_p(u,w)$. 
 
 We now prove the general case $u\in \cE^p(X, \theta)$. Let $f_j$ be a sequence of smooth functions decreasing to $u$ and set $u_j:=P_\theta (f_j) \searrow u$, $v_j=\max(u_j, v)\searrow v$, $w_j=\max(u_j, w)\searrow w$. By construction we have that $u_j\leq v_j\leq w_j$. It follows from the above that
 $$d_p(u_j, v_j) \leq d_p(u_j, w_j).$$ The conclusion then follows from \cite[Proposition 3.12]{DNL20}.
 \end{proof}
 \subsection{Convexity of the K-energy}
 We let $\phi \in \PSH(X,\theta)$ with minimal singularities be the unique solution of
 \[
 (\theta+dd^c \phi)^n = \vol(\theta)\,\omega^n, \; \sup_X \phi=0. 
 \]
Both existence and uniqueness are guaranteed by \cite[Theorem A and Theorem 4.1]{BEGZ10}.
We keep in mind that, when it is possible to define $\Ric(\theta_\phi)$, then by construction $\Ric(\theta_\phi)=\Ric(\omega)$.

 For $u\in \PSH(X,\theta)$ with minimal singularities we set  
 \begin{equation}
 	\label{eq: K-energy}
 	\cM_{\theta}(u) := \bar{S}_{\theta} E(\theta; u,\phi) - n E_{\Ric (\omega)} (\theta; u,\phi) + \Ent(\theta_{\phi}^n, \theta_u^n),
 \end{equation}
 where 
 \[
\bar{S}_{\theta}:= \frac{n}{\vol(\theta)}\int_{\Omega}\Ric(\omega) \wedge \theta_{V_{\theta}}^{n-1},
 \]
 \[
 E(\theta; u,\phi) := \frac{1}{(n+1)\vol(\theta)} \sum_{k=0}^n \int_{\Omega} (u-\phi) \theta_u^{k} \wedge \theta_{\phi}^{n-k},
 \]
 and 
 \[
  E_{\Ric(\omega)}(\theta; u,\phi) :=  \frac{1}{n\vol(\theta)} \sum_{k=0}^{n-1} \int_{\Omega} (u-\phi) \theta_u^{k} \wedge \theta_{\phi}^{n-k-1}\wedge \Ric(\omega). 
 \]
 The Monge-Amp\`ere products above are well defined in $\Omega$ because the potentials involved are locally bounded there. It is not clear how to define $E_{\Ric(\omega)}(\theta; u,\phi)$ for a general $u\in \cE^1(X,\theta)$ since the later condition does not imply  $u\in \cE^1(X,\omega)$ even if $\theta\geq 0$ (see \cite{Dn16}).  When $\theta=\omega$, we have $\phi=0$  and $\cM_{\theta}$ is the Mabuchi K-energy functional \cite{Mab87}; see Section \ref{K-en}. 
    \begin{theorem}\label{thm: convexity of Mabuchi nef}
    	Assume $u_0,u_1\in \Ent(X,\theta)$ have minimal singularities. Let $(u_t)_{t\in [0,1]}$ be the psh geodesic connecting $u_0$ and $u_1$. Then $t\mapsto \cM_{\theta}(u_t)$ is convex and continuous on $[0,1]$. 
    \end{theorem}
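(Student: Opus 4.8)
The plan is to reduce to the known convexity of the Mabuchi functional in a K\"ahler class (\cite{BB17,BDL17}) via the regularization $\theta_\varepsilon:=\theta+\varepsilon\omega$, whose cohomology class is K\"ahler for every $\varepsilon>0$, and to let $\varepsilon\searrow 0$. First, a few reductions. Since $\{\theta\}$ is nef and big, $V_\theta$ is bounded, hence $u_0,u_1$ (having minimal singularities) are bounded; we may normalize $\sup_X u_j=0$, since $\cM_\theta$ is invariant under the addition of constants (a direct computation from \eqref{eq: K-energy} using that the mixed masses $\int_\Omega \Ric(\omega)\wedge\theta_u^k\wedge\theta_\phi^{n-1-k}$ are cohomological). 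By \eqref{eq: Lip} the curve $t\mapsto u_t$ is uniformly Lipschitz, so each $u_t$ is bounded; and since $u_t\ge P_\theta(\min(u_0,u_1))\ge V_\theta-C$, each $u_t$ has minimal singularities, so $\cM_\theta(u_t)$ is well defined by \eqref{eq: K-energy}. Finally, recall from \eqref{mabuchiNotK} that on a K\"ahler class the Mabuchi functional and its convexity along geodesics do not depend on the chosen smooth representative; thus $\cM_{\theta_\varepsilon}$, namely the functional \eqref{eq: K-energy} attached to the K\"ahler class $\{\theta_\varepsilon\}$, is the usual extended K-energy, to which \cite{BDL17} applies.

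Write $(\theta+dd^c u_j)^n=f_j\,\omega^n$ with $\Ent(\omega^n,f_j\omega^n)<+\infty$ for $j=0,1$, and for small $\varepsilon>0$ let $v_j^\varepsilon\in\cE(X,\theta_\varepsilon)$, $\sup_X v_j^\varepsilon=0$, be the unique solution of $(\theta_\varepsilon+dd^c v_j^\varepsilon)^n=c_\varepsilon f_j\,\omega^n$, where $c_\varepsilon:=\vol(\theta_\varepsilon)/\vol(\theta)\to 1$. Then $\Ent(\omega^n,c_\varepsilon f_j\omega^n)$ is uniformly bounded and converges to $\Ent(\omega^n,f_j\omega^n)$, so $v_j^\varepsilon\in\Ent(X,\theta_\varepsilon)\subset\cE^1(X,\theta_\varepsilon)$; by Theorem \ref{thm: stability of solutions} (applied with the decreasing classes $\theta_\varepsilon\searrow\theta$) each $v_j^\varepsilon$ is sandwiched between two monotone $\cE^1$-sequences converging to $u_j$, hence $v_j^\varepsilon\to u_j$ in capacity, and likewise the minimal-singularity solution $\phi_\varepsilon$ of $(\theta_\varepsilon+dd^c\phi_\varepsilon)^n=\vol(\theta_\varepsilon)\omega^n$ converges to $\phi$. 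Let $t\mapsto v_t^\varepsilon$ be the $\theta_\varepsilon$-psh geodesic joining $v_0^\varepsilon$ and $v_1^\varepsilon$; squeezing it between the $\theta_\varepsilon$-geodesics of the sandwiching sequences (which are monotone in their endpoints, and dominated by the corresponding $\theta$-geodesics from below) gives $v_t^\varepsilon\to u_t$ in capacity. By \cite{BB17,BDL17}, $t\mapsto \cM_{\theta_\varepsilon}(v_t^\varepsilon)$ is convex and continuous on $[0,1]$, so $\cM_{\theta_\varepsilon}(v_t^\varepsilon)\le (1-t)\cM_{\theta_\varepsilon}(v_0^\varepsilon)+t\,\cM_{\theta_\varepsilon}(v_1^\varepsilon)$.

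Now let $\varepsilon\to 0$. At the endpoints the entropy terms converge by construction, and the energy terms converge because $\bar S_{\theta_\varepsilon}\to\bar S_\theta$ (intersection numbers vary continuously with the class), $\phi_\varepsilon\to\phi$, and — using the uniform entropy bound and Corollary \ref{cor: BBEGZ entropy} to upgrade the convergence to $d_1(v_j^\varepsilon,u_j)\to 0$ — the energies $E(\theta_\varepsilon;v_j^\varepsilon,\phi_\varepsilon)$ and $E_{\Ric(\omega)}(\theta_\varepsilon;v_j^\varepsilon,\phi_\varepsilon)$ are continuous along $d_1$-converging sequences (for the twisted one one uses $-C\omega\le\Ric(\omega)\le C\omega$, cf. \cite[Lemma 2.7 and Proposition 2.8]{BBGZ13}); hence $\cM_{\theta_\varepsilon}(v_j^\varepsilon)\to\cM_\theta(u_j)$, $j=0,1$. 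For interior $t$, convexity and the bounded endpoint values make $\cM_{\theta_\varepsilon}(v_t^\varepsilon)$, hence $\Ent(\omega^n,(\theta_\varepsilon+dd^c v_t^\varepsilon)^n)$, uniformly bounded; by lower semicontinuity of the entropy under $(\theta_\varepsilon+dd^c v_t^\varepsilon)^n\weak(\theta+dd^c u_t)^n$ and convergence of the energy terms, $\cM_\theta(u_t)\le\liminf_{\varepsilon\to0}\cM_{\theta_\varepsilon}(v_t^\varepsilon)\le(1-t)\cM_\theta(u_0)+t\,\cM_\theta(u_1)$ for all $t\in[0,1]$. In particular $u_t\in\Ent(X,\theta)$, so $\cM_\theta(u_t)<+\infty$; applying the same inequality to the geodesic restricted to an arbitrary $[a,b]\subset[0,1]$ (whose endpoints again lie in $\Ent(X,\theta)$ with minimal singularities) yields convexity of $t\mapsto\cM_\theta(u_t)$ on $[0,1]$. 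Continuity on $(0,1)$ is then automatic; at $t=0$, convexity gives $\limsup_{t\to0^+}\cM_\theta(u_t)\le\cM_\theta(u_0)$, while $u_t\to u_0$ uniformly by \eqref{eq: Lip}, so that lower semicontinuity of the entropy together with continuity of the energy terms under uniform convergence of bounded potentials gives the reverse inequality; similarly at $t=1$.

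The main obstacle is the limiting argument of the third paragraph: one must establish the convergence of the Monge--Amp\`ere and twisted Monge--Amp\`ere energies $E(\theta_\varepsilon;\cdot,\phi_\varepsilon)$ and $E_{\Ric(\omega)}(\theta_\varepsilon;\cdot,\phi_\varepsilon)$ under the simultaneous degeneration of the cohomology class ($\theta_\varepsilon\searrow\theta$) and convergence of possibly unbounded $\cE^1$-potentials, where monotonicity is unavailable and one must instead extract $d_1$-compactness and uniform domination by capacity from the entropy bounds (via Theorem \ref{thm: Entropy implies Ep and integrability} and Corollary \ref{cor: BBEGZ entropy}); this is intertwined with the stability of the psh geodesic $v_t^\varepsilon\to u_t$ as both the endpoints and the class vary. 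This is precisely where finiteness of the entropy of $u_0,u_1$ enters.
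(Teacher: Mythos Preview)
Your overall strategy---approximate the big and nef class by K\"ahler classes $\theta_\varepsilon=\theta+\varepsilon\omega$, invoke the convexity of the extended K-energy from \cite{BB17,BDL17}, and pass to the limit---is exactly the paper's approach. But there is a factual error and, more importantly, a genuine gap in the limiting step.

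The factual error: ``Since $\{\theta\}$ is nef and big, $V_\theta$ is bounded'' is false. Nefness only says that $\{\theta+\varepsilon\omega\}$ is K\"ahler for every $\varepsilon>0$; it does not force the ample locus to be all of $X$. In general $V_\theta$, and hence $u_0,u_1,u_t$, are only locally bounded on $\Amp(\theta)$. This does not destroy the argument---minimal singularities is all that is needed for $\cM_\theta(u_t)$ to be well defined via \eqref{eq: K-energy}---but your subsequent claims that the potentials are bounded are incorrect.

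The genuine gap is precisely where you identify ``the main obstacle'': the convergence of the twisted energy $E_{\Ric(\omega)}(\theta_\varepsilon;v_t^\varepsilon,\phi_\varepsilon)$ as $\varepsilon\to 0$. Your approximants $v_j^\varepsilon$ solve $(\theta_\varepsilon+dd^c v_j^\varepsilon)^n=c_\varepsilon f_j\omega^n$ with $f_j$ merely of finite entropy; such solutions lie in $\cE^{n/(n-1)}(X,\theta_\varepsilon)$ but need not be bounded, and there is no uniform control on $v_j^\varepsilon-\phi_\varepsilon$. The twisted energy is a sum of integrals $\int_\Omega (v_t^\varepsilon-\phi_\varepsilon)\,\Ric(\omega)\wedge(\cdots)$ with a possibly unbounded integrand against a signed form, and the paper explicitly remarks that $E_{\Ric(\omega)}(\theta;\cdot,\phi)$ is not even clearly defined beyond minimal singularities. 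Your appeal to ``$d_1$-continuity'' is problematic because the $d_1$ metric itself varies with the class, and the citation of \cite{BBGZ13} gives bounds on $E_{\Ric(\omega)}$ in terms of $E$, not convergence across degenerating classes. The paper circumvents this with a two-step approximation: first assume the densities $f_0,f_1$ are \emph{bounded}, solve the twisted equation $(\theta+2^{-j}\omega+dd^c u_{t,j})^n=e^{\alpha(u_{t,j}-u_t)}f_t\omega^n$, and invoke the uniform $L^\infty$ estimate of \cite{DGG20} to get $u_{t,j}-\phi_j$ \emph{uniformly bounded} for $t=0,1$ (hence for all $t$ by \eqref{eq: Lip}). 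With this uniform bound the energy convergence reduces to elementary weak convergence of bounded Monge--Amp\`ere products on $\Omega$. Only afterwards is the boundedness of $f_0,f_1$ removed, by approximating within the fixed class $\{\theta\}$ via $(\theta+dd^c u_{t,j})^n=e^{\alpha(u_{t,j}-u_t)}\min(f_t,j)\omega^n$. This decoupling of the class degeneration from the density truncation is the key technical idea your proposal is missing.
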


    \begin{proof}
     We write 
    \[
    (\theta+dd^c u_t)^n= f_t \omega^n\; \text{for}\quad  t=0,1,
    \]
     and we first assume $f_0,f_1$ are bounded. In the following we want to approximate the endpoints $u_0, u_1$ by a decreasing sequence $\{u_{t,j}\}$, $t=0,1$, of bounded $(\theta+2^{-j}\omega)$-psh functions such that the entropy and the energy of the approximants are converging.
     
     We approximate  $\{\theta\}$ by K\"ahler classes $\{\theta+2^{-j} \omega\}$.  	 For each $j>0$,  we let $\phi_{j}$ be the unique smooth $(\theta+2^{-j}\omega)$-psh function such that 
    \begin{equation}\label{presc_ricci}
    (\theta+ 2^{-j} \omega +dd^c \phi_{j})^n =\vol(\theta+2^{-j}\omega)\omega^n,  \; \sup_X \phi_{j}=0. 
    \end{equation}
    Recall that we have normalized $\omega$ by $\vol(\omega)=1$.  By \cite{Yau78}, $\phi_{j}$ is smooth and 
    \[
    \theta_{j}:= \theta+2^{-j} \omega+ dd^c \phi_{j}
    \]
    is a K\"ahler form. Moreover, it follows from \cite[Theorem 1.9]{DGG20} that $\phi_j-V_{\theta+2^{-j}\omega}$ is uniformly bounded.  By construction we then have $\Ric(\theta_{j})= \Ric(\omega)$.
    
    Using \eqref{mabuchiNotK} we let $\cM_{j}$ be the Mabuchi K-energy defined for potentials in $\cE^1(X,\theta+2^{-j}\omega )$. More precisely, for any $u\in \cE^1(X,\theta+2^{-j}\omega )$
    
    \begin{eqnarray}
 	\label{eq: K-energy eqs 1}
 	\cM_{j}(u) &=&\bar{S}_{\theta_{j}} E(\theta+ 2^{-j}\omega; u,\phi_{j}) - n E_{\Ric(\omega)} (\theta+ 2^{-j} \omega; u,\phi_{j}) \\
 	&&\nonumber  + \Ent(\theta_{j}^n, (\theta+ 2^{-j} \omega+dd^c u)^n). 
 \end{eqnarray}
   where we recall that $$\bar{S}_{\theta_{j}} = \frac{n}{\vol(\theta_j)} c_1(X) \cdot \{\theta_j\}^{n-1},$$ 
 \begin{flalign*}
 	&E(\theta+2^{-j} \omega ; u,\phi_j) := \\
 	&\frac{1}{(n+1)\vol(\theta_j)} \sum_{k=0}^n \int_{\Omega} (u-\phi_j) (\theta+2^{-j} \omega +dd^c u)^{k} \wedge (\theta+2^{-j} \omega + dd^c \phi_j)^{n-k}
  \end{flalign*}	
 and 
 \begin{flalign*}
 	 &E_{\Ric(\omega)}(\theta+2^{-j} \omega ; u,\phi_j):= \\
 	 & \frac{1}{n\vol(\theta_j)} \sum_{k=0}^{n-1} \int_{\Omega} (u-\phi_j) (\theta+2^{-j} \omega  +dd^c u)^{k}  \wedge (\theta+2^{-j} \omega  +dd^c \phi_j)^{n-k-1}\wedge \Ric(\omega).
 \end{flalign*}

 Fix $\alpha>0$ so small that $\int_X e^{-2\alpha v} dV <+\infty$ for all $v\in \PSH(X,\theta)$. 
     For $t\in \{0,1\}$, let $u_{t,j}$ be the unique bounded $(\theta+2^{-j}\omega)$-psh function such that 
     \[
     (\theta+ 2^{-j} \omega +dd^c u_{t,j})^n = e^{\alpha (u_{t,j} - u_t)}f_t \omega^n. 
     \]
     Observe that the existence and uniqueness of a bounded solution $ u_{t,j}$ follows from \cite{Kol98} since $e^{-\alpha  u_t}f_t\in L^2(X, \omega^n)$ (recall that $f_0, f_1$ are bounded for the moment).\\
      It follows from the comparison principle  that $u_{t,j} \searrow u_t$, $t=0,1$ (see e.g. \cite[Lemma 2.5]{DDL1}). In particular we have $\sup_X u_{t,j}\leq \sup_X u_{t,1}$.  Since $f_0,f_1$ are bounded, it follows from \cite[Theorem 1.9]{DGG20} that $u_{t,j} -V_{\theta+2^{-j}\omega}$ is uniformly bounded. Hence $u_{t,j} -\phi_j$ is uniformly bounded, $t=0,1$.

 Let $(u_{t,j})_{t\in [0,1]} \subset \PSH(X,\theta+2^{-j} \omega)$ be the psh geodesic connecting $u_{0,j}$ to $u_{1,j}$ and $(u_t)_{t\in [0,1]} \subset \PSH(X,\theta)$ be the psh geodesic connecting $u_{0}$ to $u_{1}$. Observe that $u_{t,j}$ is decreasing to $u_t$ for any $t\in [0,1]$.
 Moreover, the fact that the psh geodesic is $t$-Lipschitz gives that for any $t\in[0,1]$
      $$|u_{t,j}-u_{0,j}|\leq t|u_{1,j}-u_{0,j}|\leq t|u_{1,j}-\phi_j|+t|u_{0,j}-\phi_j|\leq Ct.$$
      Since $|u_{0,j}-\phi_j|$ and $|u_{1,j}-\phi_j|$ are uniformly bounded, the above 
      means that $u_{t,j}-\phi_j$ is also uniformly bounded for all $t$.
      
 The advantage of the definition \eqref{eq: K-energy eqs 1} is that, by \cite{BB17,CLP16,BDL17}, the function $t\mapsto \cM_{j}(u_{t,j})$ is convex and continuous on $[0,1]$.  \\
 The convexity and continuity of $\cM_{\theta}$ follow if we can show that, for $t=0,1$, $\cM_{j}(u_{t,j})$ converges to $\cM_{\theta}(u_t)$, and for  $t\in (0,1)$ 
    \[
    \liminf_{j \to +\infty} \cM_{j}(u_{t,j}) \geq \cM_{\theta}(u_t).
    \] 
    We then need to study both the convergence of the energy part and the convergence of the energy as $j\rightarrow +\infty$.
    \smallskip

   {\it Convergence of the Monge-Amp\`ere energy.}  Fix $t\in [0,1]$ and $0\leq k \leq n$.  To simplify the notations we write 
    \[
    \mu_{t,j}:=(\theta+ 2^{-j} \omega +dd^c u_{t,j})^k \wedge \theta_{j}^{n-k},\; 
     \mu_t := (\theta +dd^c u_t)^k \wedge (\theta+dd^c \phi)^{n-k}.
     \]
   By Theorem \ref{thm: stability of solutions}, $\phi_{j} \to \phi$ in capacity as $j\to+\infty$. Also, we note that $u_t-\phi$ is bounded (for all $t$) since $u_t$ and $\phi$ are $\theta$-psh functions with minimal singularities while we already observed that $u_{t,j}-\phi_{j}$ is uniformly bounded for all $t$. In particular there exists  $C>0$ such that 
   $$-C \leq u_{t} - \phi\leq C\quad  {\rm and} \quad  -C \leq u_{t,j} - \phi_{j}\leq C \;\; \forall j.$$ 
   It then follows from \cite[Theorem 4.26]{GZbook} that $\mu_{t,j} \to \mu_t$ and $(u_{t,j}-\phi_{j})\mu_{t,j} \to (u_t-\phi) \mu_t$ in the weak sense of measures on $\Omega$ and 
    \[
    \liminf_{j\to +\infty}\int_{\Omega} (u_{t,j}-\phi_{j}+C) \mu_{t,j}  \geq  \int_{\Omega} (u_t-\phi+C) \mu_t,
    \]
    and 
     \[
    \limsup_{j\to +\infty}\int_{\Omega} (u_{t,j}-\phi_{j}-C) \mu_{t,j}  \leq  \int_{\Omega} (u_t-\phi-C) \mu_t. 
    \]
Since $\int_\Omega  d\mu_{t,j} = \vol(\theta+2^{-j} \omega)$, $\int_\Omega  d\mu_{t} = \vol(\theta)$ and $\vol(\theta+2^{-j} \omega) \to \vol(\theta) $, it follows from above that
$$ \liminf_{j\to +\infty}\int_{\Omega} (u_{t,j}-\phi_{j}) \mu_{t,j} = \int_{\Omega} (u_t-\phi) \mu_t.$$
Hence $$E(\theta+2^{-j}\omega; u_{t,j}, \phi_{j}) \to E(\theta; u_t,\phi)\qquad {\rm as}\; j\rightarrow +\infty$$
    The same arguments prove the convergence of $E_{\Ric(\omega)}(\theta+2^{-j}\omega; u_{t,j}, \phi_{j})$ towards $E_{\Ric(\omega)} (\theta; u_t,\phi)$. 
    
    \medskip

        {\it Convergence of the entropy term.} 
        We  write 
        \[
        \nu_{t,j}:= (\theta+2^{-j}\omega+dd^c u_{t,j})^n, \quad \nu_t := (\theta+dd^c u_t)^n.
        \]
         Observe that, since $u_{t,j}$ is decreasing to $u_t$, $\nu_{t,j}$ converges to $\nu_t$ in the weak sense of measures.
         
         Fix $t\in \{0,1\}$. Recall that $0\leq f_t$ is bounded and $\alpha>0$ is chosen such that $e^{-2\alpha u_t}\in L^1(X,\omega^n)$. A simple computation gives 
    \begin{eqnarray*}
    	 &&\Ent(\theta_j^n,\nu_{t,j}) = \frac{1}{\vol(\theta_{j})} \int_X \log \left( \frac{f_t e^{\alpha(u_{t,j}-u_t)}}{\vol(\theta_j)}\right) f_t e^{\alpha(u_{t,j}-u_t)}\omega^n\\
    	 &&= \frac{1}{\vol(\theta_{j})} \int_X \left (\log f_t -\log (\vol(\theta_j)) + \alpha(u_{t,j}-u_t)  \right) f_t e^{\alpha(u_{t,j}-u_t)}\omega^n.
    \end{eqnarray*}
    For $t=0,1$, since $f_t$ is bounded we have
    $$  f_t \log f_t \, e^{\alpha(u_{t,j}-u_t)}\leq C e^{-\alpha u_t} \in L^2(X, \omega^n) \subset L^{1}(X, \omega^n) $$ and
    $$\alpha(u_{t,j}-u_t)  f_t e^{\alpha(u_{t,j}-u_t)}\leq C (-\alpha u_t) e^{-\alpha u_t}\leq C e^{-2 u_t}\in  L^{1}(X, \omega^n). $$
    Also $-\log (\vol(\theta_j)) f_t e^{\alpha(u_{t,j}-u_t)}\leq 0$. By the dominated convergence theorem we can infer that $\Ent(\theta_j^n,\nu_{t,j})$ converges to $\Ent(\theta_{\phi}^n,\nu_t)$.

    We thus have the convergence $\cM_j(u_{t,j}) \to \cM_\theta (u_t)$, for $t=0,1$.
    Also, for $t\in (0,1)$, by lower semicontinuity of the entropy we have  
    \begin{eqnarray*}
    	 \liminf_{j\to+\infty} \Ent(\theta_j^n, \nu_{t,j}) 
   & \geq &   \Ent(\theta_{\phi}^n,\nu_t).
   \end{eqnarray*}
   
  This proves that $\liminf_{j\to+\infty} \cM_j(u_{t,j}) \geq \cM_\theta (u_t)$ for all $t$.
  This gives the convexity  of $[0,1] \ni t \mapsto \cM(u_t)$. 
  The above arguments also show that $t \mapsto \cM(u_t)$ is lower semicontinuous in $[0,1]$, hence it is continuous in $[0,1]$. 
  \smallskip

We now remove the boundedness assumption on $f_0,f_1$.  For each $t\in \{0,1\}$  we solve 
\[
(\theta +dd^c u_{t,j})^n =e^{\alpha(u_{t,j} -u_t)} \min(f_t,j) \omega^n.
\]
Then $C\geq u_{t,j} \searrow u_t$. Thus $\theta_{u_{t,j}}^n\to \theta_{u_t}^n$ in the weak sense of measures. Also $C\geq u_{t,j} \searrow u_t $ together with the fact that $u_t$ has minimal singularities imply that $u_{t,j}$ has minimal singularities as well and that we have a uniform bound $0\leq u_{t,j}-u_t\leq C$
Then we infer 
\begin{eqnarray*}
\Ent(\theta_{\phi}^n,\theta_{u_{t,j}}^n)&=&  \frac{1}{\vol(\theta)} \int_X \log \left( \frac{\min(f_t,j) e^{\alpha(u_{t,j}-u_t)}}{\vol(\theta_j)}\right) \min(f_t,j) e^{\alpha(u_{t,j}-u_t)}\omega^n\\
&\leq & \frac{C}{\vol(\theta)} \int_X f_t \log f_t \omega^n,
\end{eqnarray*}
where the last integral is bounded for $t=0,1$ since $\theta_{u_0}^n=f_0\,\omega^n, \theta_{u_1}^n=f_1\,\omega^n$ have finite entropy.
Again the dominated convergence theorem ensures that 
\begin{equation}\label{convEntmin}
\Ent(\theta_{\phi}^n,\theta_{u_{t,j}}^n) \to \Ent(\theta_{\phi}^n,\theta_{u_t}^n)\quad t=0,1.
\end{equation}
Let $(u_{t,j})_{t\in [0,1]}$ be the psh geodesic joining $u_{0,j}$ to $u_{1,j}$.  By the first step $t\mapsto \cM_{\theta}(u_{t,j})$ is convex and continuous on $[0,1]$. The arguments in the first steps can be applied to prove the  convergence of the Monge-Amp\`ere energies $E(\theta; u_{t,j},\phi)$, $E_{\Ric(\omega)}(\theta;u_{t,j},\phi)$ for $t\in [0,1]$. Also, \eqref{convEntmin} together with the lower semicontinuity of the entropy implies that $$	 \liminf_{j\to+\infty} \Ent(\theta_{\phi}^n,\theta_{u_{t,j}}^n) \geq   \Ent(\theta_{\phi}^n,\theta_{u_t}) \quad \forall t\in [0,1].$$
We thus get the convexity and continuity of $[0,1] \ni t \mapsto \cM_{\theta}(u_t)$.
    \end{proof}
    
    \subsection{Bounding the entropy along geodesics}
    \begin{prop}
    	\label{prop: entropy along geodesics big nef}
    	Assume $u_0,u_1\in \PSH(X,\theta)$ have the same singularities, i.e. that there exists $C>0$ such that $-C+u_1 \leq u_0\leq u_1+C$ on $X$. Let $(u_t)_{t\in [0,1]}$ be the psh geodesic joining $u_0$ to $u_1$.  Assume  $\Ent(\omega^n,\theta_{u_t}^n) \leq C$, for $t\in \{0,1\}$. Then, for some positive constant $C'$ depending on $C$, $X,\theta,\omega,n$ we have
    	\[
    	\Ent(\omega^n, (\theta +dd^cu_t)^n) \leq C', \ \forall t\in [0,1]. 
    	\] 
    \end{prop}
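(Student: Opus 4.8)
The plan is to read off the entropy bound from the convexity of the $K$-energy $\cM_\theta$ (Theorem \ref{thm: convexity of Mabuchi nef}), once the statement has been reduced to the case of minimal singularities. So I would split the argument into two parts: the minimal-singularities case, which is the substantive step, and an approximation step reducing the general case to it.

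\textbf{The case of minimal singularities.} Assume first that $u_0,u_1$ have minimal singularities; then $u_0,u_1\in\Ent(X,\theta)$ and Theorem \ref{thm: convexity of Mabuchi nef} gives $\cM_\theta(u_t)\le(1-t)\cM_\theta(u_0)+t\,\cM_\theta(u_1)$ for all $t\in[0,1]$. Solving \eqref{eq: K-energy} for the entropy term,
\[
\Ent(\theta_\phi^n,\theta_{u_t}^n)=\cM_\theta(u_t)-\bar S_\theta\,E(\theta;u_t,\phi)+n\,E_{\Ric(\omega)}(\theta;u_t,\phi),
\]
and since $\theta_\phi^n=\vol(\theta)\,\omega^n$ while $\theta_{u_t}^n$ has total mass $\vol(\theta)$, the left-hand side differs from $\Ent(\omega^n,\theta_{u_t}^n)$ only by the fixed constant $\vol(\theta)\log\vol(\theta)$. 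Hence it suffices to bound $\cM_\theta(u_t)$ and the two energy terms uniformly in $t$, by constants depending only on $C$ and on $X,\theta,\omega,n$. First normalize $\sup_X u_0=-1$; this is harmless since the entropy is translation invariant and the geodesic is translated by the same constant, and then $u_0$ together with a bounded shift of $u_1$ lie in $\Ent_C(X,\theta)$, so Theorem \ref{thm: Entropy implies Ep and integrability} bounds $E_1(u_0),E_1(u_1)$ in terms of $C$; via the inequality $|E(\theta;\cdot,V_\theta)|\le E_1(\cdot)/\vol(\theta)$ and the estimates of \cite{BBGZ13} for the $\Ric(\omega)$-twisted energy, $|E(\theta;u_l,\phi)|$ and $|E_{\Ric(\omega)}(\theta;u_l,\phi)|$ ($l=0,1$) are then bounded by a function of $C$. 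Combined with $\Ent(\theta_\phi^n,\theta_{u_l}^n)\le C-\vol(\theta)\log\vol(\theta)$ this bounds $\cM_\theta(u_0),\cM_\theta(u_1)$, hence $\cM_\theta(u_t)$. Finally $t\mapsto E(\theta;u_t,\phi)$ is affine along the geodesic (it equals $t\mapsto E(\theta;u_t,V_\theta)$ up to the constant $E(\theta;\phi,V_\theta)$, and the Monge-Amp\`ere energy is affine along psh geodesics), so it stays bounded; the \cite{BBGZ13} estimates then also bound $E_{\Ric(\omega)}(\theta;u_t,\phi)$, using that all $u_t$ still have minimal singularities. Plugging back yields $\Ent(\omega^n,\theta_{u_t}^n)\le C'(C,X,\theta,\omega,n)$.

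\textbf{Reduction to minimal singularities.} In general I would approximate $u_0,u_1$ from above by $\theta$-psh functions $u_{0,k},u_{1,k}$ with minimal singularities such that $u_{l,k}\searrow u_l$, such that $-C'+u_{1,k}\le u_{0,k}\le u_{1,k}+C'$ for some $C'$ independent of $k$, and such that $\sup_k\Ent(\omega^n,\theta_{u_{l,k}}^n)\le C''(C,X,\theta,\omega,n)$. The natural construction is to let $u_{l,k}$ solve a complex Monge-Amp\`ere equation $(\theta+dd^c u_{l,k})^n=e^{\alpha(u_{l,k}-u_l)}g_{l,k}\,\omega^n$, where $g_{l,k}$ is a bounded density increasing to that of $\theta_{u_l}^n$ (e.g.\ the truncations $\min(f_l,k)$, renormalized to have total mass $\vol(\theta)$): boundedness of $g_{l,k}$ forces minimal singularities, the a priori estimate of \cite{DGG20} bounds $\|u_{l,k}-V_\theta\|_\infty$ in terms of the uniform entropy bound on $g_{l,k}\omega^n$, the domination principle gives $u_{l,k}\searrow u_l$, the uniform same-singularities bound can be arranged (for instance by taking $u_{1,k}=P_\theta(u_{0,k}+C,w_{1,k})$ for a second minimal-singularities approximant $w_{1,k}\searrow u_1$), and $\int g_{l,k}\log g_{l,k}\,\omega^n\le\int f_l\log f_l\,\omega^n+O(1)$ because $x\mapsto x\log x$ is increasing on $[1,\infty)$. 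Applying the minimal-singularities estimate to the psh geodesic $u_{t,k}$ joining $u_{0,k}$ to $u_{1,k}$ gives $\Ent(\omega^n,\theta_{u_{t,k}}^n)\le C'$ uniformly in $k$; since the psh geodesic is monotone in its endpoints, $u_{t,k}\searrow u_t$, so $\theta_{u_{t,k}}^n\weak\theta_{u_t}^n$, and lower semicontinuity of the entropy yields $\Ent(\omega^n,\theta_{u_t}^n)\le\liminf_k\Ent(\omega^n,\theta_{u_{t,k}}^n)\le C'$.

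\textbf{The main obstacle.} The substantive difficulty is precisely this approximation: one must produce minimal-singularities approximants of $u_0,u_1$ whose Monge-Amp\`ere measures carry \emph{uniformly} bounded entropy while still retaining the ``same singularities'' relation with a uniform constant. Crude truncations such as $\max(u_l,V_\theta-k)$ do not work, because the Monge-Amp\`ere mass of $\theta_{\max(u_l,V_\theta-k)}^n$ concentrated on the contact set $\{u_l=V_\theta-k\}$ need not be absolutely continuous with respect to $\omega^n$, so its entropy may be infinite for every $k$. The Monge-Amp\`ere-equation construction above resolves this cleanly when $u_0,u_1\in\cE(X,\theta)$ — which is the case in all the applications, in particular in Theorem \ref{thm: geodesic distance intro} — and once the approximants are in hand the remaining passage to the limit is routine.
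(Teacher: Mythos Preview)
Your overall strategy coincides with the paper's: invoke Theorem \ref{thm: convexity of Mabuchi nef} for minimal-singularities endpoints, approximate the general case, and conclude by lower semicontinuity of the entropy. The differences are in how the two technical steps are executed, and in both places the paper's route is what actually closes the argument.

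\textbf{Energy terms.} You bound $E(\theta;u_t,\phi)$ and $E_{\Ric(\omega)}(\theta;u_t,\phi)$ absolutely by appealing to \cite{BBGZ13}. Those estimates are proved for a K\"ahler reference form, and their transfer to the big and nef setting is not off-the-shelf (the paper itself notes, just before Theorem \ref{thm: convexity of Mabuchi nef}, that $E_{\Ric(\omega)}(\theta;\cdot,\phi)$ is delicate outside minimal singularities). The paper sidesteps this entirely: rather than bounding each energy term, it bounds the combination
\[
E_{\Ric(\omega)}(\theta;\varphi_{t,j},\phi)-(1-t)E_{\Ric(\omega)}(\theta;\varphi_{0,j},\phi)-tE_{\Ric(\omega)}(\theta;\varphi_{1,j},\phi)
\]
via the cocycle formula and the Lipschitz estimate $|\varphi_{t,j}-\varphi_{s,j}|\le C$. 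This reduces to integrating a bounded function against mixed Monge--Amp\`ere measures of fixed total mass, so only cohomological constants enter, and convexity of $\cM_\theta$ directly yields
\[
\Ent(\omega^n,\theta_{\varphi_{t,j}}^n)\le(1-t)\Ent(\omega^n,\theta_{\varphi_{0,j}}^n)+t\,\Ent(\omega^n,\theta_{\varphi_{1,j}}^n)+C_3
\]
with $C_3=C_3(C,X,\theta,\omega,n)$.

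\textbf{Approximation.} Your construction $(\theta+dd^c u_{l,k})^n=e^{\alpha(u_{l,k}-u_l)}g_{l,k}\,\omega^n$ has two gaps. First, the Monge--Amp\`ere density of $u_{l,k}$ is $e^{\alpha(u_{l,k}-u_l)}g_{l,k}$, not $g_{l,k}$; when $u_l$ lacks minimal singularities $u_{l,k}-u_l$ is unbounded above, so the bound you write for $\int g_{l,k}\log g_{l,k}\,\omega^n$ does not control $\Ent(\omega^n,\theta_{u_{l,k}}^n)$. Second, the renormalized truncations $c_{l,k}\min(f_l,k)$ are not monotone in $k$ (the constant decreases while the truncation increases), so the domination principle does not give $u_{l,k}\searrow u_l$. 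The paper drops the exponential and solves $(\theta+dd^c u_{t,j})^n=c(t,j)\min(f_t,j)\,\omega^n$: this sacrifices monotonicity but keeps the entropy of the approximants trivially controlled by $\int f_t\log f_t\,\omega^n$. To pass to the limit it uses Corollary \ref{cor: BBEGZ entropy} to get $d_1(u_{t,j},u_t)\to 0$, takes envelopes $\varphi_{0,j}=P_\theta(u_{0,j},u_{1,j}+C)$, $\varphi_{1,j}=P_\theta(u_{0,j}+C,u_{1,j})$ to restore $|\varphi_{0,j}-\varphi_{1,j}|\le C$ while preserving the uniform entropy bound (Lemma \ref{lem: MA contact}), and then uses convexity of $d_1$ along geodesics to obtain $d_1(\varphi_{t,j},u_t)\to 0$, hence $\theta_{\varphi_{t,j}}^n\weak\theta_{u_t}^n$ and lower semicontinuity finishes. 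Your ``main obstacle'' paragraph correctly isolates this as the hard step; the paper's $d_1$-based approach is precisely what fills it.
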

    
    \begin{proof}
        	   We write $(\theta+dd^c u_t)^n = f_t \omega^n$, for $t=0,1$ and solve, for each $j>0$
    	\[
    	(\theta+dd^c u_{t,j})^n= c(t,j) \min(f_t,j) \omega^n,\; u_{t,j}\in \cE(X,\theta),
    	\]  
    	with $\sup_X u_{t,j}=\sup_X u_t$.
    	By \cite[Theorem A and Theorem 4.1]{BEGZ10}, $u_{t,j}$ exists and have minimal singularities.
    	Here $c(t,j)$ is a normalization constant satisfying 
    	$$c(t,j)\int_X \min(f_t,j) \omega^n = \vol(\theta).$$ 
    	Since $\min(f_t,j) \nearrow f_t$, we have $c(t,j) \searrow 1$ as $j\to +\infty$. After extracting a subsequence we can assume that $u_{t,j}$ converges to $v_t$ in $L^{1}(X, \omega^n)$. By Corollary \ref{cor: BBEGZ entropy}, we have $d_1(u_{t,j},v_t)\to 0$ as $j\to +\infty$, for $t=0,1$. In particular $\theta_{u_{t,j}}^n \to \theta_{v_t}^n$ and so $(\theta+dd^c v_t)^n = f_t \omega^n$. By uniqueness $v_t=u_t+C_t$. But by construction $\sup_X v_t=\sup_X u_t$, hence $C_t=0$. It then follows that $d_1(u_{t,j},u_t)\to 0$ as $j\to +\infty$.
    
    	We also have 
    	\[
    	\Ent(\omega^n, (\theta+dd^c u_{t,j}))^n \leq C_1, \; t=0,1, \; j >0,
    	\]  	
    	for a uniform constant $C_1$. We next consider 
    	\[
    	\varphi_{0,j}:= P_{\theta}(u_{0,j}, u_{1,j}+C), \; \varphi_{1,j}:= P_{\theta}(u_{0,j}+C, u_{1,j}),
    	\] 
    	and observe that $\varphi_{0,j} -C\leq \varphi_{1,j} \leq \varphi_{0,j}+C$. By Lemma \ref{lem: MA contact} we have
    	\[
    	(\theta+dd^c \varphi_{t,j})^n \leq (c(0,j)f_0 + c(1,j) f_1) \omega^n, \quad  t=0,1. 
    	\]
	We thus have $\Ent(\omega^n, (\theta+dd^c \varphi_{t,j})^n)\leq C_2$, for a uniform constant $C_2$. Observe also that  
	\[
	P_{\theta}(u_0,u_1+C)=u_0\; \text{and} \; P_{\theta}(u_0+C,u_1)=u_1.
	\] 
	It then follows from \cite[Corollary 3.5]{DDL3} that
	\begin{eqnarray*}
	d_1(\varphi_{0,j},u_0) &=& d_1(P_{\theta}(u_{0,j}, u_{1,j}+C), P_{\theta}(u_0,u_1+C)) \\
	&\leq &   d_1(P_{\theta}(u_{0,j}, u_{1,j}+C), P_{\theta}(u_0,u_{1,j}+C)) +d_1(P_{\theta}(u_0,u_{1,j}+C),P_{\theta}(u_0,u_1+C))  \\
	&\leq  & d_1(u_{0,j}, u_0) +  d_1(u_{1,j}, u_1) \to 0.
	\end{eqnarray*}
Similarly we get that $	d_1(\varphi_{1,j},u_1)\to 0$.
	    	  
	 Let $\f_{t,j}$ be the psh geodesic connecting $\varphi_{0,j}$ to $\varphi_{1,j}$. By convexity of $d_1$, we have $d_1(\f_{t,j},u_t) \to 0$ for all $t\in (0,1)$. In particular $\theta_{\f_{t,j}}^n\to \theta_{u_t}^n$. Fixing $0\leq s<t\leq 1$,  since $\varphi_{0,j} - \varphi_{1,j}$ is uniformly bounded, by \eqref{eq: Lip} we have that $\f_{t,j}-\f_{s,j}$ is also uniformly bounded.\\
	 It then follows that
	 \[
	 E_{\Ric(\omega)}(\theta;\f_{t,j},\phi)-  E_{\Ric(\omega)}(\theta;\f_{s,j},\phi) = \frac{1}{n\vol(\theta)} \sum_{k=0}^{n-1} \int_{\Omega} (\f_{t,j}-\f_{s,j}) \theta_{\f_{t,j}}^k \wedge \theta_{\f_{s,j}}^{n-k-1} \wedge \Ric(\omega)
	 \]
	 is uniformly bounded. Similarly,
	 \[
	 E(\theta;\f_{t,j},\phi)-  E(\theta;\f_{s,j},\phi) = \frac{1}{(n+1)\vol(\theta)} \sum_{k=0}^{n} \int_{\Omega} (\f_{t,j}-\f_{s,j}) \theta_{\f_{t,j}}^k \wedge \theta_{\f_{s,j}}^{n-k-1} 
	 \]
	 is also uniformly bounded.
	 From the convexity of $t\mapsto \cM_{\theta}(\f_{t,j})$, which follows from Theorem \ref{thm: convexity of Mabuchi nef}, we thus infer
    	 \begin{eqnarray*}
    	 &&\Ent(\theta_{\phi}^n,(\theta+dd^c \f_{t,j})^n)  + E_{\Ric(\omega)}(\f_{t,j}) + E(\f_{t,j})\leq  \\
    	 && (1-t) \Ent(\theta_{\phi}^n,(\theta+dd^c \f_{0,j})^n)+(1-t)E_{\Ric(\omega)}(\f_{0,j}) + (1-t)E(\f_{0,j}) \\
    	 && +  t \Ent(\theta_{\phi}^n, (\theta+dd^c \f_{1,j})^n)+ tE_{\Ric(\omega)}(\f_{1,j}) + tE(\f_{1,j}) 
    	 \end{eqnarray*}
     
Now, it follows from the above that 
    \[
    E_{\Ric(\omega)}(\f_{t,j}) - (1-t)E_{\Ric(\omega)}(\f_{0,j}) -tE_{\Ric(\omega)}(\f_{1,j}) 
    \]
    is uniformly bounded as well as the other energy part. Since $\theta_{\phi}^n= \vol(\theta)\omega^n$, we have
    	 \[
    	 \Ent(\omega^n, (\theta+dd^c \f_{t,j})^n) \leq (1-t) \Ent(\omega^n,(\theta+dd^c \f_{0,j})^n) +t \Ent(\omega^n,(\theta+dd^c \f_{1,j})^n) + C_3. 
    	 \]
    	 Now, the right hand side is uniformly bounded while $$\liminf_{j\to +\infty}\Ent(\omega^n, (\theta+dd^c \f_{t,j})^n)\geq \Ent(\omega^n, (\theta+dd^c u_{t})^n).$$ We then arrive at the conclusion. 
    \end{proof}
    
    \subsection{Geodesic distance}
   Using Proposition \ref{prop: entropy along geodesics big nef} we can repeat the arguments of Theorem \ref{thm: geodesic distance} to obtain the following extension: 
     \begin{theorem}\label{thm: geodesic distance big nef}
Assume $\theta$ is a smooth closed real $(1,1)$-form representing a big and nef class. Fix $p\geq 1$,  $u_0,u_1\in \Ent(X,\theta)$ and let $u_t$ be the psh geodesic connecting $u_0$ to $u_1$. If $u_0-u_1$ is bounded then 
\begin{equation}\label{eq: geodesic distance dp big nef}
 \int_X |\dot{u}_t|^p (\theta +dd^c u_t)^n\; \text{is constant in}\; t \in [0,1].
\end{equation}
If in addition $u_0,u_1\in \cE^p(X,\theta)$, then 
$$d_p^p(u_0,u_1)= \int_X |\dot{u}_t|^p (\theta +dd^c u_t)^n,\quad \forall t\in [0,1].$$ 
\end{theorem}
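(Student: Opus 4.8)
The plan is to run the proof of Theorem~\ref{thm: geodesic distance} in the big and nef setting, using the results of Section~4 in place of their K\"ahler analogues. The first step is the exact analogue of Lemma~\ref{lem: geodesic distance}: if $u_0,u_1\in\Ent(X,\theta)$ with $u_0-u_1$ bounded and $(u_t)$ is the psh geodesic joining them, then $\dot u_t^+=\dot u_t^-$ almost everywhere with respect to $\theta_{u_t}^n$ for every $t\in(0,1)$. The argument is the one used for Lemma~\ref{lem: geodesic distance}: since the Monge--Amp\`ere energy $E=E(\theta;\cdot,V_\theta)$ is affine along psh geodesics and concave along affine segments,
\[
\int_X \dot u_t^-\,\theta_{u_t}^n \;\le\; E(u_1)-E(u_0)\;\le\; \int_X \dot u_t^+\,\theta_{u_t}^n,\qquad t\in(0,1),
\]
using \eqref{eq: Lip} for the boundedness of $\dot u_t$ and the fact that $\theta_{u_t}^n$ gives no mass to $\{P_\theta(u_0,u_1)=-\infty\}$. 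The place where Lemma~\ref{lem: geodesic distance} invoked the convexity of $\cM_\omega$ is now taken by Proposition~\ref{prop: entropy along geodesics big nef}, which yields $\Ent(\omega^n,\theta_{u_t}^n)\le C$ uniformly in $t$; hence $\theta_{u_t}^n$ has $L^1$ density, and since $\dot u_t^+=\dot u_t^-$ Lebesgue-a.e.\ for a.e.\ $t$ (Fubini and the $t$-Lipschitz property) the displayed inequalities are equalities for a.e.\ $t$. One upgrades this to every $t\in(0,1)$ exactly as in Lemma~\ref{lem: geodesic distance}: convexity of $t\mapsto u_t(x)$, the weak convergence $\theta_{u_s}^n\rightharpoonup\theta_{u_t}^n$ as $s\to t$ (true since $u_s\to u_t$ uniformly with equal total masses, cf.~\cite{DDL2}), Lemma~\ref{lem: convergence bounded} and Lemma~\ref{lem: entropy vol}.

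The second step reduces the statement to the K\"ahler case, i.e.\ to Theorem~\ref{thm: geodesic distance}, by approximating $\{\theta\}$ with the K\"ahler classes $\{\theta+2^{-j}\omega\}$. Writing $\theta_{u_t}^n=f_t\omega^n$ for $t=0,1$, I would solve in $\{\theta+2^{-j}\omega\}$ the equations $(\theta+2^{-j}\omega+dd^c\tilde u_{t,j})^n=c_{t,j}\min(f_t,j)\,\omega^n$ with $\sup_X\tilde u_{t,j}=\sup_X u_t$ and $c_{t,j}$ a normalising constant; by \cite{Kol98} the $\tilde u_{t,j}$ are bounded, the elementary bound $\min(f_t,j)\log\min(f_t,j)\le f_t\log f_t+C$ gives a uniform entropy bound so that $\tilde u_{t,j}\in\Ent(X,\theta+2^{-j}\omega)$, and by Theorem~\ref{thm: stability of solutions}, $\tilde u_{t,j}\to u_t$ in capacity. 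To restore bounded differences I would replace them, as in the proof of Proposition~\ref{prop: entropy along geodesics big nef}, by $u_{0,j}:=P_{\theta+2^{-j}\omega}(\tilde u_{0,j},\tilde u_{1,j}+C_0)$ and $u_{1,j}:=P_{\theta+2^{-j}\omega}(\tilde u_{0,j}+C_0,\tilde u_{1,j})$, with $C_0:=\sup_X|u_0-u_1|$; then $|u_{0,j}-u_{1,j}|\le C_0$, the uniform entropy bound persists by Lemma~\ref{lem: MA contact}, and $u_{t,j}\to u_t$. Since $\{\theta+2^{-j}\omega\}$ is K\"ahler and the $u_{t,j}$ are bounded with bounded difference and finite entropy, Theorem~\ref{thm: geodesic distance} applies in that class: the geodesic $(u_{t,j})_t$ satisfies $\int_X|\dot u_{t,j}|^p(\theta+2^{-j}\omega+dd^c u_{t,j})^n$ independent of $t$, and equal to $d_{p,j}(u_{0,j},u_{1,j})^p$ (the Darvas distance of $\{\theta+2^{-j}\omega\}$) when $u_0,u_1\in\cE^p(X,\theta)$.

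The last step is the passage $j\to\infty$, a verbatim repetition of the limiting argument in Theorem~\ref{thm: geodesic distance}. Convexity of the K-energy of $\{\theta+2^{-j}\omega\}$ together with uniform bounds on the K-energies of the endpoints gives $\Ent(\omega^n,(\theta+2^{-j}\omega+dd^c u_{t,j})^n)\le C$ uniformly in $j$ and $t$, hence $(\theta+2^{-j}\omega+dd^c u_{t,j})^n\rightharpoonup\theta_{u_t}^n$; then, introducing the difference quotients $a_{t,h,j},b_{t,h,j}$ and their average $c_{t,h,j}$, using Lemma~\ref{lem: conv measure} and Lemma~\ref{lem: entropy vol}, letting first $j\to\infty$ and then $h\to0^+$, and invoking the first step, one obtains for $t\in(0,1)$ that $\int_X|\dot u_{t,j}|^p(\theta+2^{-j}\omega+dd^c u_{t,j})^n\to\int_X|\dot u_t^+|^p\theta_{u_t}^n=\int_X|\dot u_t^-|^p\theta_{u_t}^n$. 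Since the left-hand side is $t$-independent and $d_{p,j}(u_{0,j},u_{1,j})\to d_p(u_0,u_1)$, both conclusions follow for $t\in(0,1)$, and the cases $t=0,1$ are obtained by the continuity-in-$t$ argument of Theorem~\ref{thm: geodesic distance}.

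I expect the main difficulty to be the robustness of this approximation across the \emph{varying} classes $\{\theta+2^{-j}\omega\}$: one has to secure the entropy bound along the family $(u_{t,j})$ uniformly in both $t$ and $j$, which requires uniform control of the energy parts of the K-energies of $u_{0,j},u_{1,j}$ (obtained from the bounded differences $|u_{0,j}-u_{1,j}|\le C_0$ and from the behaviour of the solutions $\phi_j$ of $(\theta+2^{-j}\omega+dd^c\phi_j)^n=\vol(\theta+2^{-j}\omega)\,\omega^n$), together with the convergence $d_{p,j}(u_{0,j},u_{1,j})\to d_p(u_0,u_1)$ of the Finsler distances as the classes degenerate. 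Once these are in place, the rest is identical to the K\"ahler case.
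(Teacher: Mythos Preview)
Your strategy for the first conclusion \eqref{eq: geodesic distance dp big nef} is essentially the paper's: establish the big--nef analogue of Lemma~\ref{lem: geodesic distance} via Proposition~\ref{prop: entropy along geodesics big nef}, approximate the endpoints by finite-entropy potentials in the K\"ahler classes $\{\theta+2^{-j}\omega\}$ with bounded differences, apply Theorem~\ref{thm: geodesic distance} there, and pass to the limit using Lemmas~\ref{lem: conv measure} and~\ref{lem: entropy vol}. The minor difference (truncating the density before moving to the K\"ahler class versus the paper's choice of solving $(\theta+2^{-j}\omega+dd^c v_{t,j})^n=\vol(\theta+2^{-j}\omega)\,\theta_{u_t}^n$ and then applying Theorem~\ref{thm: stability of solutions}) is harmless.

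The gap is in the identification with $d_p$. You rely on the convergence $d_{p,j}(u_{0,j},u_{1,j})\to d_p(u_0,u_1)$ of the Darvas distances \emph{across varying cohomology classes} $\{\theta+2^{-j}\omega\}\to\{\theta\}$. This is not available in the cited literature; the results of \cite{DNL20} are intrinsic to a fixed big--nef class, and nothing in the paper proves stability of $d_p$ under degeneration of the class. You flag this yourself as the ``main difficulty'', but it is in fact the crux, and your outline does not resolve it.

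The paper bypasses this entirely by carrying out the $d_p$ identification \emph{within the fixed class $\{\theta\}$}. It first treats the case where $u_0,u_1$ have minimal singularities: when $u_0=P_\theta(f)$ with $f$ smooth, \cite[Lemma~3.13]{DNL20} gives $d_p^p(u_0,u_1)=\int_X|\dot u_0|^p\theta_{u_0}^n$, and one removes the envelope assumption on $u_0$ by approximating from above and using pointwise convergence of $\dot u_{1,j}\to\dot u_1$ together with \cite[Proposition~3.12]{DNL20}. For general $u_0,u_1\in\cE^p(X,\theta)\cap\Ent(X,\theta)$, one approximates by minimal-singularity solutions of $(\theta+dd^c v_{t,j})^n=c_{t,j}\min(f_t,j)\omega^n$ \emph{in the $\theta$ class}, takes the envelopes $P_\theta(v_{0,j},v_{1,j}+C)$, $P_\theta(v_{0,j}+C,v_{1,j})$, and uses the sandwiching from Theorem~\ref{thm: stability of solutions} plus Lemma~\ref{dp_comp} to obtain $d_p(u_{t,j},u_t)\to 0$ in $(\cE^p(X,\theta),d_p)$. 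Proposition~\ref{prop: entropy along geodesics big nef} then provides the uniform entropy bound along the approximating geodesics, and the limiting argument of Theorem~\ref{thm: geodesic distance} finishes. The point is that all $d_p$-convergence statements used are in the \emph{same} class $\{\theta\}$, where they are established in \cite{DNL20}. Your scheme would work if you could supply the cross-class $d_p$ stability, but absent that, you should restructure the $d_p$ step along these lines.
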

\begin{proof}
	By Proposition \ref{prop: entropy along geodesics big nef}, $\Ent(\omega^n,\theta_{u_t}^n)$ is uniformly bounded. The proof of Lemma \ref{lem: geodesic distance} can thus be repeated word by word giving that, for each $t\in (0,1)$, $\dot{u}_t^+=\dot{u}_t^-$ almost everywhere on $X$ with respect to $(\theta+dd^c u_t)^n$. 
	
	Fix $t\in \{0,1\}$. For each $j>0$ let $v_{t,j}\in \cE(X,\theta+2^{-j}\omega)$ be the unique solution to 
	\[
	(\theta+2^{-j}\omega +dd^c v_{t,j})^n = \vol(\theta+2^{-j}\omega) (\theta+dd^c u_t)^n, \; \sup_X v_{t,j}=\sup_X u_t.
	\]
	By Theorem \ref{thm: stability of solutions}, after passing to a subsequence,  $v_{t,j}$ can be sandwiched between two monotone sequences converging to $u_t$: 
	\[
	\cE(X,\theta) \ni h_{t,j}\leq v_{t,j}\leq w_{t,j}\in \cE(X,\theta+2^{-j}\omega).
	\] 
Fix a constant $C>0$ such that $|u_0-u_1|\leq C$.  Define 
	$$u_{0,j}:= P_{\theta+2^{-j}\omega}(v_{0,j},v_{1,j}+C), \qquad u_{1,j}:= P_{\theta+2^{-j}\omega}(v_{0,j}+C,v_{1,j}).$$
	By construction
	$$u_{1,j} -C\leq u_{0,j}\leq u_{1,j} -C$$
	and, as we observed in the proof of Proposition \ref{prop: entropy along geodesics big nef}, $\Ent(\omega^n, (\theta+2^{-j}\omega+dd^c u_{t,j})^n)$ is uniformly bounded.

	Let $(u_{t,j})_{t\in [0,1]}$ be the psh geodesic connecting $u_{0,j}$ to $u_{1,j}$. We claim that for all $t\in [0,1]$, $u_{t,j} \to u_t$ in capacity. Indeed, since $u_{0.j}$ and $u_{1,j}$ can be sandwiched between two monotone sequences converging to $u_0$ and $u_1$ (respectively), using the comparison principle \cite[Proposition 2.15]{DDL3} we can also sandwich $u_{t,j}$ for $t\in[0,1]$ between two monotone sequences converging to $u_t$. This proves the claim. 
	
	Thus, for each $t\in [0,1]$,  $(\theta+2^{-j}\omega+dd^c u_{t,j})^n \rightarrow (\theta+dd^c u_t)^n$ weakly.  By Theorem \ref{thm: geodesic distance}, 
	$$
	\int_X |\dot{u}_{t,j}|^p (\theta+2^{-j}\omega+dd^c u_{t,j})^n
	$$
	 is constant in $t\in [0,1]$. Letting $j\to +\infty$, and repeating the argument of Theorem \ref{thm: geodesic distance} we obtain that 
	$ \int_X |\dot{u}_{t}|^p (\theta+dd^c u_{t})^n$ is constant in $t\in [0,1]$.
	
	\medskip

	It remains to prove the last statement. Assume in addition that  $u_0, u_1\in \cE^p(X, \theta) $. We want to prove that 
	$$
	d_p^p(u_0, u_1)=  \int_X |\dot{u}_{t}|^p (\theta+dd^c u_{t})^n,\;  \forall t\in [0,1].
	$$ 
	We divide the proof into two steps.
	\medskip
	
\noindent 	\underline{Step 1}: We first assume $u_0, u_1$ have both minimal singularities. If $u_0=P_\theta(f)$ for some smooth function, it then follows from \cite[Lemma 3.13]{DNL20} that 
	$$ d_p^p(u_0, u_1)= \int_X |\dot{u}_0|^p \theta_{u_0}^n= \int_X |\dot{u}_t|^p \theta_{u_t}^n,$$
	where the last equality follows from the fact that the quantity at the right-hand-side is constant in $t$. 
	For the general case of two functions with minimal singularities, we consider $f_{0,j}$ a sequence of smooth functions decreasing to $u_0$ and $u_{0,j}=P_\theta(f_{0,j})$. By the above
	$$ d_p^p(u_{0,j}, u_1)= \int_X |\dot{u}_{0,j}|^p \theta_{u_{0,j}}^n = \int_X |\dot{u}_{1,j}|^p \theta_{u_1}^n,$$
	where $u_{t,j}$ is the geodesic joining $u_{0,j}$ and $u_1$. Let $u_t$ be the geodesic joining $u_0$ and $u_1$. Then $u_{t,j}\searrow u_t$ and 
	$$  
	\frac{u_1- u_{t,j}}{1-t} \leq  \dot{u}_{1,j}= \lim_{t\to 1^-}\frac{u_1- u_{t,j}}{1-t} \leq  \lim_{t\to 1^-} \frac{u_1-u_t}{1-t} =\dot{u}_1.
	$$
	Letting $j\to +\infty$, we get
	$$
	\frac{u_1- u_{t}}{1-t} \leq \liminf_{j\to +\infty} \dot{u}_{1,j} \leq \limsup_{j\to +\infty} \dot{u}_{1,j} \leq  \dot{u}_1.
	$$
	It follows that $\dot{u}_{1,j}$ converges pointwise to $\dot{u}_1$. 
	The dominated convergence theorem then ensures that $$ d_p^p(u_{0,j}, u_1) \rightarrow \int_X |\dot{u}_1|^p \theta_{u_1}^n= \int_X |\dot{u}_t|^p \theta_{u_t}^n, \; \forall t\in [0,1].$$
	On the other hand $d_p (u_{0,j}, u_1)\rightarrow d_p (u_{0}, u_1)$ by \cite[Proposition 3.12]{DNL20}. This completes the proof of Step 1. 
	\medskip
	
\noindent 	\underline{Step 2}: We now work in the general case $u_0, u_1\in \cE^p(X, \theta)\cap \Ent(X, \theta)$ and $|u_0-u_1|\leq C$. 

\medskip

Fix $t\in \{0,1\}$.  By assumption we have $(\theta+dd^c u_t)^n= f_t \omega^n$ with $\int_X f_t \log f_t \omega^n <+\infty$. We solve 
$$ (\theta+dd^c v_{t,j})^n= c_{t,j} \min(f_t,j) \omega^n, \quad \sup_X v_{t,j} = \sup_X u_t,
$$ 
where $c_{t,j}$ is a normalization constant. We have $c_{t,j} \searrow 1$ as $j\to +\infty$. By the proof of Theorem \ref{thm: stability of solutions}, after extracting a subsequence we can sandwich $v_{t,j}$ by two monotone sequences: 
\[
\cE^1(X,\theta)\ni \varphi_{t,j} \leq v_{t,j} \leq \psi_{t,j} \in \cE^1(X,\theta) \; , \; \varphi_{t,j} \nearrow u_{t}, \; \psi_{t,j} \searrow u_{t}. 
\]
Recall that $\varphi_{t,j}$ is defined by $\varphi_{t,j}= P_{\theta}(\inf_{k\geq j} v_{t,k})$ and obesrve that we can assume $c_{t,j}\leq 2$. By Lemma \ref{lem: MA contact} we have 
\[
(\theta+dd^c \f_{t,j})^n \leq  2 (\theta+dd^c u_{t})^n. 
\]
Since $u_t \in \cE^p(X,\theta)$, by Lemma \ref{MA_ep} we have that $\f_{t,j}\in \cE^p(X,\theta)$. 

We now define
$$
u_{0,j}:= P_{\theta}(v_{0,j}, v_{1,j}+C)\; , \; u_{1,j}:= P_\theta(v_{0,j}+C, v_{1,j}).
$$
By construction we have that $u_{0,j},u_{1,j}$ have minimal singularities, $|u_{0,j}- u_{1,j}|\leq C$, and  $\Ent (\omega^n, (\theta+dd^c u_{t,j})^n) \leq C'$ for $t\in \{0,1\}$. 

Moreover, we claim that $d_p(u_{t,j},u_t) \to 0$.  Indeed, define 
$$ \tilde{\f}_{0,j} = P_\theta (\f_{0,j}, \f_{1,j}+C ), \qquad \tilde{\f}_{1,j} = P_\theta ( \f_{1,j}+C, \f_{1,j} ),$$
$$ \tilde{\psi}_{0,j}=P_\theta ( \psi_{0,j}, \psi_{1,j}+C ),\qquad \tilde{\psi}_{1,j}=P_\theta ( \psi_{0,j}+C, \psi_{1,j}). 
	$$
For $t\in \{0,1\}$ we then have 
	\begin{eqnarray*}
	\cE^p(X, \theta) \ni \tilde{\f}_{t,j} \leq u_{t,j}\leq  \tilde{\psi}_{t,j}\in \cE^p(X, \theta),
	\end{eqnarray*}
and $ \tilde{\f}_{t,j}  \nearrow u_t$, $\tilde{\psi}_{t,j}\searrow u_t$. The fact that $\tilde{\f}_{t,j}$ and  $\tilde{\psi}_{t,j}$ belong to $\cE^p (X, \theta)$ follows from Lemma \ref{MA_ep} and Lemma \ref{lem: MA contact}. 
	By the triangle inequality and Lemma \ref{dp_comp}, we then have
	\begin{eqnarray*}
	d_p(u_{t,j}, u_t) & \leq & d_p(u_{t,j}, \tilde{\f}_{t,j}) + d_p (\tilde{\f}_{t,j}, u_t) \\
	 & \leq & d_p(\tilde{\varphi}_{t,j}, \tilde{\psi}_{t,j}) + d_p (\tilde{\f}_{t,j}, u_t)\\
	  & \leq & d_p(\tilde{\varphi}_{t,j}, u_t) +d_p(\tilde{\psi}_{t,j}, u_t) + d_p (\tilde{\f}_{t,j}, u_t)\to 0. 
	 \end{eqnarray*}
	  The claim is then proved. 
	  
	 Now, let $(u_{t,j})_{t\in [0,1]}$ be the psh geodesic joining $u_{0,j}$ to $u_{1,j}$. By the first step we can infer that, for $t\in [0,1]$,
	 $$ d_p^p(u_{0,j}, u_{1,j}) = \int_X |\dot{u}_{t,j}|^p \theta_{u_{t,j}}^n.$$
	 The left-hand side converges to $d_p^p(u_{0}, u_{1})$ thanks to the claim. It follows from Proposition \ref{prop: entropy along geodesics big nef} that $\Ent(\omega^n, (\theta+dd^c u_{t,j})^n)$ is uniformly bounded for $t\in [0,1]$, $j\geq 1$. Using the same arguments of Theorem \ref{thm: geodesic distance} we see that the right-hand side above converges to $\int_X |\dot{u}_{t}|^p \theta_{u_{t}}^n$. This finishes the proof. 
	\end{proof}

\section{Monge-Amp\`ere measures on contact sets}

As an application we prove the following result generalizing \cite{DNT19} in the case of a big and nef class.

\begin{theorem}\label{thm: MA contact}
Assume $\{\theta\}$ is big and nef, 
$u \in \PSH(X,\theta)$, $v\in \Ent(X,\theta)$ and $u\leq v$. 
Then 
 \[
 {\bf 1}_{\{u = v\}} \theta_u^n = {\bf 1}_{\{u=v\}} \theta_v^n. 
 \]
\end{theorem}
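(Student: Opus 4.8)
The plan is as follows. The inequality ${\bf 1}_{\{u=v\}}\,\theta_u^n\le{\bf 1}_{\{u=v\}}\,\theta_v^n$ is immediate from the second assertion of Lemma~\ref{lem: max principle}, so I would concentrate entirely on the reverse inequality ${\bf 1}_{\{u=v\}}\,\theta_v^n\le{\bf 1}_{\{u=v\}}\,\theta_u^n$. First I would reduce to the case of bounded difference: fix $c>0$ and replace $u$ by $u_c:=\max(u,v-c)$. The set $\{u>v-c\}$ is finely open and $u_c=u$ on it, so plurifine locality of the non-pluripolar Monge--Amp\`ere operator gives ${\bf 1}_{\{u>v-c\}}\theta_{u_c}^n={\bf 1}_{\{u>v-c\}}\theta_u^n$; since $\{u=v\}\subset\{u>v-c\}$ and $\{u_c=v\}=\{u=v\}$ off a pluripolar set, it suffices to prove the statement for $u_c$. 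Thus I may assume $v-c\le u\le v$, so that $u$ has the same singularities as $v\in\cE^{\frac n{n-1}}(X,\theta)$ (Theorem~\ref{thm: Entropy implies Ep and integrability}), hence $u\in\cE^1(X,\theta)$. Finally, approximating $\theta_u^n$ from below by truncated densities, solving the corresponding Monge--Amp\`ere equations, and using the stability and entropy estimates of Section~\ref{sec:prelim} (Theorem~\ref{thm: stability of solutions}, Corollary~\ref{cor: BBEGZ entropy}, Lemma~\ref{lem: entropy vol}), I would further reduce to $u\in\Ent(X,\theta)$ with $|u-v|$ bounded, which places us in the range of the geodesic distance formula of the previous section; after all these reductions the claim amounts to the scalar identity $\theta_u^n(\{u=v\})=\theta_v^n(\{u=v\})$.

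For the core step I would introduce the decreasing family of envelopes $P_\lambda:=P_\theta(u,v-\lambda)$, $\lambda\ge0$, so that $P_0=u$, $P_\lambda\searrow u$ as $\lambda\downarrow0$, each $P_\lambda$ has the same singularities as $v$ (hence lies in $\cE^1(X,\theta)$), and by Lemma~\ref{lem: MA contact} the measure $\theta_{P_\lambda}^n$ is carried by the contact set $\{P_\lambda=\min(u,v-\lambda)\}$ with $\theta_{P_\lambda}^n\le{\bf 1}_{\{P_\lambda=u\}}\theta_u^n+{\bf 1}_{\{P_\lambda=v-\lambda\}}\theta_v^n$. The idea is to compute $\frac{d}{d\lambda}\big|_{0^+}E(P_\lambda)$ in two ways. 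On one hand, from $d_1(u,v-\lambda)=E(u)+E(v-\lambda)-2E(P_\lambda)$ and $E(v-\lambda)=E(v)-\lambda$, this derivative is governed by $\frac{d}{d\lambda}\big|_{0^+}d_1(u,v-\lambda)$, which I would compute by applying Theorem~\ref{thm: geodesic distance big nef} with $p=1$ to the geodesics $\gamma^\lambda$ joining $u$ to $v-\lambda$: splitting $X$ by the sign of $\dot\gamma^{\lambda,+}_0$ (nonnegative on $\{u<v-\lambda\}$, nonpositive and essentially equal to $(v-\lambda)-u$ on $\{u>v-\lambda\}$) and letting $\lambda\downarrow0$, so that $\{u>v-\lambda\}\downarrow\{u=v\}$, the relevant mass is $\theta_u^n(\{u=v\})$. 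On the other hand, the first variation of the Monge--Amp\`ere energy along the envelope family reads $\frac{d}{d\lambda}E(P_\lambda)=\frac1{\vol(\theta)}\int_X\partial_\lambda\min(u,v-\lambda)\,\theta_{P_\lambda}^n=-\frac1{\vol(\theta)}\int_{\{v-\lambda<u\}}\theta_{P_\lambda}^n$; on $\{v-\lambda<u\}$ the binding obstacle is $v-\lambda$, so there $\theta_{P_\lambda}^n$ agrees with $\theta_v^n$ on the contact set and vanishes off it, and letting $\lambda\downarrow0$ (using $\{v-\lambda<u\}\downarrow\{u=v\}$ and that $\{v-u=\lambda\}$ is $\theta_u^n$- and $\theta_v^n$-negligible for a.e.\ $\lambda$) this derivative equals $-\vol(\theta)^{-1}\theta_v^n(\{u=v\})$. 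Comparing the two expressions gives $\theta_u^n(\{u=v\})=\theta_v^n(\{u=v\})$, and together with the max-principle inequality this forces ${\bf 1}_{\{u=v\}}\theta_u^n={\bf 1}_{\{u=v\}}\theta_v^n$.

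The main obstacle is making this core step rigorous: establishing differentiability of $\lambda\mapsto E(P_\lambda)$ and the first-variation formula in the big and nef setting, keeping precise track of the envelope contact sets $\{P_\lambda=u\}$ and $\{P_\lambda=v-\lambda\}$ — which are only quasi-closed — as $\lambda\downarrow0$, and justifying the exchange of the limit $\lambda\to0^+$ with the integrals (including the one-sided derivatives of the geodesics $\gamma^\lambda$ on the set $\{u=v\}$). This is exactly where the finite-entropy hypothesis is decisive: by Lemma~\ref{lem: entropy vol} it yields a uniform domination by capacity, so that the limiting measures behave well (Lemma~\ref{lem: conv measure}, Lemma~\ref{lem: convergence quasi open}), and it is also what legitimises the approximation in the reduction step, putting us within the scope of the geodesic distance formula of the previous section.
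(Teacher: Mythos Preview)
Your outline contains two serious gaps that I do not see how to close.

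\textbf{The reduction to $u\in\Ent(X,\theta)$ does not work as stated.} After your step~2 you have $v-c\le u\le v$, so $u\in\cE^1(X,\theta)$, but $\theta_u^n$ need not be absolutely continuous with respect to $\omega^n$: there is no density to truncate. The stability results you cite (Theorem~\ref{thm: stability of solutions}, Corollary~\ref{cor: BBEGZ entropy}) concern solutions with prescribed $L^1$ densities and cannot be invoked to approximate a potential whose Monge--Amp\`ere measure may be singular. The paper avoids this by approximating $u$ from above by smooth $u_j$ and replacing $u$ by the envelope $\varphi_j=P_\theta(u_j,v)$; Lemma~\ref{lem: MA contact} then forces $\theta_{\varphi_j}^n$ to be dominated by $\theta_v^n$ plus a smooth measure, which automatically has finite entropy.

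\textbf{The core step is circular.} In your second computation of $\frac{d}{d\lambda}E(P_\lambda)$ you assert that on $\{v-\lambda<u\}$ the measure $\theta_{P_\lambda}^n$ ``agrees with $\theta_v^n$ on the contact set''. Lemma~\ref{lem: MA contact} only gives $\theta_{P_\lambda}^n\le{\bf 1}_{\{P_\lambda=u\}}\theta_u^n+{\bf 1}_{\{P_\lambda=v-\lambda\}}\theta_v^n$, and Lemma~\ref{lem: max principle} only gives ${\bf 1}_{\{P_\lambda=v-\lambda\}}\theta_{P_\lambda}^n\le{\bf 1}_{\{P_\lambda=v-\lambda\}}\theta_v^n$. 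The reverse inequality ${\bf 1}_{\{P_\lambda=v-\lambda\}}\theta_v^n\le{\bf 1}_{\{P_\lambda=v-\lambda\}}\theta_{P_\lambda}^n$ is exactly an instance of the theorem you are proving (with $P_\lambda$ in the role of $u$). Your first computation has a parallel problem: the pointwise claims $\dot\gamma_0^{\lambda,+}\ge0$ on $\{u<v-\lambda\}$ and $\dot\gamma_0^{\lambda,+}=(v-\lambda)-u$ on $\{u>v-\lambda\}$ are not justified --- convexity in $t$ only yields $\dot\gamma_0^{\lambda,+}\le(v-\lambda)-u$, and the sign of $\dot\gamma_0^{\lambda,+}$ at a point depends on the global geodesic, not on the local ordering of the endpoints.

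For comparison, the paper does not differentiate in $\lambda$ and instead uses the geodesic formula for \emph{every} $p\ge1$: from $\int_X|\dot u_0|^p\theta_u^n=\int_X|\dot u_1|^p\theta_v^n$ for all $p$ one passes to $\int_Xe^{-A\dot u_0}\theta_u^n=\int_Xe^{-A\dot u_1}\theta_v^n$ and lets $A\to+\infty$ to isolate the sets $\{\dot u_0=0\}$ and $\{\dot u_1=0\}=\{u=v\}$. A comparison of this geodesic with a second one, joining $u$ to the auxiliary envelope $h_\lambda=P_\theta(\lambda f,v)$ (for $u=P_\theta(f,v)$ with $f$ smooth), then converts this into the equality of masses on $\{u=v\}$, after which Lemma~\ref{lem: convergence quasi open} handles $\lambda\to1^-$.
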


\begin{proof}
Without loss of generality we assume $v\leq 0$.

We start assuming that both $u$ and $v$ have finite entropy and $u-v$ is bounded. 
Let $u_t$ be the psh  geodesic connecting $u_0=u$ to $u_1=v$. Note that since $u_0\leq u_1$ we have that $u_0\leq u_t \leq u_1$ for all $t\in [0,1]$ and so 
$$
0\leq \dot{u}_t \leq \sup_X |u_1-u_0|<+\infty
$$ 
for all $t \in [0,1]$. By Theorem \ref{thm: geodesic distance big nef} we then have that 
\[
\int_X \dot{u}_0^p (\theta+dd^c u_0)^n = \int_X \dot{u}_1^p (\theta +dd^c u_1)^n, \ \forall p\geq 1. 
\]
 It thus follows that 
\[
\int_X e^{-A \dot{u}_0} (\theta +dd^c u_0)^n = \int_X e^{-A \dot{u}_1} (\theta +dd^c u_1)^n,  \ \forall A>0. 
\]
Letting $A\to +\infty$ we have that 
\begin{equation}
\label{id_middle}
\int_{\{\dot{u}_0 =0\}}  (\theta +dd^c u)^n = \int_{\{\dot{u}_1 =0\}}  (\theta +dd^c v)^n.
\end{equation}
  We do not necessarily have that $\{\dot{u}_0 =0\} = \{u_0=u_1\}$ but we do have 
  $$
  \{\dot{u}_1=0\} = \{u_0=u_1\}
  $$ 
  because $0\leq u_1-u_0 \leq \dot{u}_1$ (the last inequality follows from the convexity of $t\to u_t$)
  and if $u_0(x)=u_1(x)$ then $u_0(x)\leq u_t(x)\leq u_1(x)$ (hence equality) for all $t\in [0,1]$, implying that $\dot{u_1}(x)=0$.

 We now will prove the statement of the theorem in the case $u= P_\theta(f,v)$, where $f$ is a smooth negative function. Observe that in this case $u$ and $v$ belong to $\Ent(X,\theta)$. We also have that $u-v$ is bounded. Thus, by the first step, the equality \eqref{id_middle} holds.
 We also fix $\lambda\in (0,1)$ and  set $h_{\lambda}:= P(\lambda f,v)$. Observe that $\lambda u+ (1-\lambda)v$ is $\theta$-psh and it is smaller than $\min(\lambda f,v)$. We thus have 
\[
u \leq \lambda u+ (1-\lambda) v \leq h_{\lambda} \leq v.
\]
From this we infer $\{u=v\} = \{u=h_{\lambda}\}$. The inclusion $\subseteq$ is clear, while the other comes from the fact $u=h_\lambda$ implies $(1-\lambda)u=(1-\lambda) v$.

 Let $\psi_t$ be the psh geodesic connecting $\psi_0 = u$ to $\psi_1=h_{\lambda}$. Note that $\psi_0 \leq \psi_1$.  By assumption we have $\psi_0,\psi_1 \in \Ent(X,\theta)$ and $\psi_1-\psi_0$ is bounded. We can thus apply the first step to get
\begin{flalign}
	\int_{\{\dot{\psi}_0 =0\}}  \theta_u^n &= \int_{\{\dot{\psi}_1 =0\}}  (\theta +dd^c h_\lambda)^n = \int_{\{\psi_1 = u\}}  (\theta +dd^c  h_\lambda)^n \nonumber\\
	&=\int_{\{h_\lambda = u\}}  (\theta +dd^c  h_\lambda)^n
	= \int_{\{u=v\}}  (\theta +dd^c  h_\lambda)^n.\label{eq: MA contact 1}
\end{flalign}
Since $\psi_0=u_0$ and $\psi_1=h_\lambda\leq v=u_1$, by the comparison principle for geodesics (see e.g. \cite{DDL3})  we have $\psi_t \leq u_t$ for all $t$, hence $0\leq \dot{\psi}_0 \leq \dot{u}_0$. We thus have 
\begin{equation*}
	\{\dot{u}_0=0\}\subset \{\dot{\psi}_0 =0\}.
\end{equation*}
From this and \eqref{eq: MA contact 1} we obtain 
\[
\int_{\{u=v\}} (\theta +dd^c h_{\lambda})^n \geq \int_{\{\dot{u}_0 =0\}}  (\theta +dd^c u)^n = \int_{\{u=v\}} (\theta +dd^c v)^n,
\]
where the last equality is \eqref{id_middle}. 
By Lemma \ref{lem: max principle} and  $h_\lambda\leq v$ we have
$$ 
{\bf 1}_{\{h_\lambda=v\}} (\theta +dd^c h_{\lambda})^n  \leq {\bf 1}_{\{h_\lambda=v\}} (\theta +dd^c v)^n.
$$
The inclusion $\{u=v\}\subset \{h_{\lambda}=v\}$ then gives
\[
{\bf 1 }_{\{u=v\}} (\theta+dd^c h_{\lambda} )^n \leq  {\bf 1}_{\{u=v\}} (\theta +dd^c v)^n. 
\]
Since the  two measures have the same total mass, we have 
\[
{\bf 1 }_{\{u=v\}} (\theta+dd^c h_{\lambda})^n =  {\bf 1}_{\{u=v\}} (\theta +dd^c v)^n. 
\]
By construction we have $h_{\lambda} \searrow u$. 
We now let $\lambda \to 1^-$ and, since ($\{u=v\}$ is quasi-closed, Lemma \ref{lem: convergence quasi open} gives
\[
{\bf 1 }_{\{u=v\}} (\theta+dd^c u)^n \geq   {\bf 1}_{\{u=v\}} (\theta +dd^c v)^n.
\] The reverse inequality holds thanks to Lemma \ref{lem: max principle} since we assumed $u=P_\theta(f, v)\leq v$. 

To prove the general case we approximate $u$ from above by a decreasing sequence of smooth  functions $u_j\leq 0$ and argue as in \cite{DNT19}. 
Setting $\varphi_j := P_\theta(u_j,v)$ and using the first step we have 
\[
{\bf 1}_{\{\varphi_j=v\}} (\theta +dd^c \varphi_j)^n = {\bf 1}_{\{\varphi_j=v\}} (\theta +dd^c v)^n. 
\]
Observe that $u\leq \varphi_j \leq v$, hence $\{u=v\}\subset \{\varphi_j=v\}$. We then have, for all $A>0$, 
\[
\int_X e^{A(\varphi_j-v)} (\theta +dd^c \varphi_j)^n \geq \int_{\{u=v\}} (\theta +dd^c v)^n. 
\]
Since $\varphi_j \searrow u$ we know that $ (\theta +dd^c \varphi_j)^n$ converges weakly to $ (\theta +dd^c u)^n$. Also $(e^{A(\varphi_j-v)})_j$ is a uniformly bounded sequence of functions decreasing to $e^{A(u-v)}$ that is bounded as well. Letting $j\to +\infty$ we obtain 
\[
\int_X e^{A(u-v)} (\theta +dd^c u)^n \geq \int_{\{u=v\}} (\theta +dd^c v)^n. 
\]
Now, letting $A\to +\infty$ we obtain the result. 
\end{proof}

If two $\theta$-psh functions coincide almost everywhere with respect to Lebesgue measure then they coincide everywhere.  Using the above result we obtain the following: 
\begin{corollary}
Assume $\{\theta\}$ is big and nef, $v \in {\rm Ent}(X,\theta)$ and $u\in \PSH(X,\theta)$. If $u=v$ $\theta_v^n$-almost everywhere then $u=v$ everywhere. 
\end{corollary}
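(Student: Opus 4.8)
The plan is to prove the two inequalities $u\le v$ and $v\le u$ by genuinely different arguments, using throughout that $\{u\neq v\}$ is $\theta_v^n$-negligible, so that $\theta_v^n(\{u=v\})=\theta_v^n(\{u\le v\})=\vol(\theta)$.

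First I would show $u\le v$. Put $g:=\max(u,v)\in\PSH(X,\theta)$; then $g\ge v$ and $\{g=v\}=\{u\le v\}$. The second part of Lemma~\ref{lem: max principle}, applied to $v\le g$, gives ${\bf 1}_{\{g=v\}}\theta_v^n\le{\bf 1}_{\{g=v\}}\theta_g^n$. Integrating and using $\theta_v^n(\{g=v\})=\vol(\theta)$ forces $\int_X\theta_g^n\ge\vol(\theta)$, hence $g\in\cE(X,\theta)$; comparing total masses then upgrades the last inequality to ${\bf 1}_{\{g=v\}}\theta_g^n={\bf 1}_{\{g=v\}}\theta_v^n$ and shows $\theta_g^n(\{g>v\})=0$, so $\theta_g^n=\theta_v^n$. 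By uniqueness in $\cE(X,\theta)$ (\cite[Theorem A]{BEGZ10}) we get $g=v+c$ with $c\ge0$, and evaluating on the (nonempty) set $\{g=v\}$ gives $c=0$; thus $g=v$, i.e. $u\le v$.

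Next, with $u\le v$ and $v\in\Ent(X,\theta)$ in hand, Theorem~\ref{thm: MA contact} gives ${\bf 1}_{\{u=v\}}\theta_u^n={\bf 1}_{\{u=v\}}\theta_v^n$. Integrating and using $\theta_v^n(\{u=v\})=\vol(\theta)$ yields $\int_X\theta_u^n\ge\int_{\{u=v\}}\theta_u^n=\vol(\theta)$, so $u\in\cE(X,\theta)$; since $u\le v$ this also forces $\theta_u^n(\{u<v\})=0$, whence $\theta_u^n={\bf 1}_{\{u=v\}}\theta_v^n=\theta_v^n$. Uniqueness again gives $u=v+c$ with $c\le0$, and $c=0$ because $\{u=v\}\neq\emptyset$; therefore $u=v$ on $X$. (The Lebesgue-a.e. rigidity recalled just before the statement is then a special case; alternatively one could invoke it in place of the final uniqueness step.)

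The main point --- and the only real obstacle --- is to see that the two inequalities cannot be proved the same way: Lemma~\ref{lem: max principle} only bounds $\theta_{\max(u,v)}^n$ from below on the contact set, which is exactly what is needed to push $\max(u,v)$ into the full-mass class and conclude $u\le v$, but it gives the inequality in the wrong direction for the pair $u\le v$. It is precisely here that the stronger contact-set \emph{equality} of Theorem~\ref{thm: MA contact} is indispensable, since it forces $\theta_u^n$ to carry full mass on $\{u=v\}$ and hence $u$ to lie in $\cE(X,\theta)$. A minor but necessary bookkeeping point is that $\{u=v\}$ is nonempty (it carries a measure of total mass $\vol(\theta)>0$), which is what pins down the additive constants produced by the uniqueness theorem.
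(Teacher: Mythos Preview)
Your proof is correct, and the second half (from $u\le v$ onward) is essentially identical to the paper's argument: both apply Theorem~\ref{thm: MA contact} to get ${\bf 1}_{\{u=v\}}\theta_u^n={\bf 1}_{\{u=v\}}\theta_v^n$, deduce that $\theta_u^n$ has full mass and equals $\theta_v^n$, and conclude via uniqueness in $\cE(X,\theta)$.

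The difference lies in how you obtain $u\le v$. The paper simply invokes the domination principle \cite[Corollary 3.10]{DDL2}, which directly gives $u\le v$ from the hypothesis that $u\le v$ holds $\theta_v^n$-a.e.\ (with $v\in\cE(X,\theta)$). You instead give a self-contained argument: you push $g=\max(u,v)$ into $\cE(X,\theta)$ via the maximum principle, identify $\theta_g^n$ with $\theta_v^n$, and then use uniqueness. This is in effect a proof of the domination principle in the case at hand, so your route is more elementary (it stays entirely within the tools developed in the paper) at the cost of a few extra lines. The paper's citation is shorter but relies on an external result. Your closing remark explaining why Lemma~\ref{lem: max principle} alone handles the first step but Theorem~\ref{thm: MA contact} is genuinely needed for the second is accurate and worth keeping.
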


\begin{proof}
	By the domination principle \cite[Corollary 3.10]{DDL2} we have that $u\leq v$, hence $\int_X \theta_u^n \leq \int_X \theta_v^n$. Also, the previous result implies that ${\bf 1}_{\{u=v\}} \theta_v^n= {\bf 1}_{\{u=v\}}\theta_u^n$ and by assumption $\theta_v^n= {\bf 1}_{\{u=v\}} \theta_v^n$. Thus 
	\[
	\int_X \theta_v^n = \int_{\{u=v\}} \theta_u^n.
	\]
	Comparing the total mass we get $\theta_u^n =\theta_v^n$, hence by uniqueness $u=v+C$ form some $C\leq 0$ \cite{BEGZ10,DiwJFA09}. Now, $\int_X(u-v)\theta_v^n=0$ gives $C=0$.  
\end{proof}

\newcommand{\etalchar}[1]{$^{#1}$}
\providecommand{\bysame}{\leavevmode\hbox to3em{\hrulefill}\thinspace}
\providecommand{\MR}{\relax\ifhmode\unskip\space\fi MR }
\providecommand{\MRhref}[2]{%
  \href{http://www.ams.org/mathscinet-getitem?mr=#1}{#2}
}
\providecommand{\href}[2]{#2}

\end{document}